\newcommand{\Teich}{\mathcal{T}}
\newcommand{\QD}[0]{\mathcal{QD}}
\newcommand{\define}[1]{\textbf{#1}}
\newcommand{\el}{\mathcal{EL}}
\newcommand{\GQD}[0]{\mathcal{GQD}}
\newcommand{\MQD}{\mathcal{MQD}}
\newcommand{\PMF}[0]{\mathcal{PMF}}
\newcommand{\PML}[0]{\mathcal{PML}}
\newcommand{\MF}[0]{\mathcal{MF}}
\newcommand{\ML}[0]{\mathcal{ML}}
\newcommand{\fol}[0]{\mathcal{F}}
\newcommand{\face}[0]{\mathbf{F}}
\newcommand{\tri}{\tau}
\newcommand{\Mod}{\mathrm{Mod}}
\newcommand{\AC}{\mathcal{AC}}
\newcommand{\tet}{\mathfrak{t}}
\newcommand{\diag}{\Delta}
\newcommand{\tetface}{\Delta}
\newcommand{\AH}{\mathrm{AH}}
\newcommand{\DD}{\mathrm{DD}}
\newcommand{\Scirc}{{\mathring{S}}}
\newcommand{\qcirc}{{\mathring{q}}}
\newcommand{\phicirc}{{\mathring{\varphi}}}
\newcommand{\support}{\operatorname{Supp}}
\newcommand{\semigroup}[1]{{\langle \support(#1) \rangle_+}}
\newcommand{\quotient}[2]{{\raisebox{0.2em}{$#1$}
           \!\!\left/\raisebox{-0.2em}{\!$#2$}\right.}}
\newcommand{\RR}[0]{\mathbb{R}}
\newcommand{\HH}[0]{\mathbb{H}}
\newcommand{\A}[0]{\mathcal{A}}
\newcommand{\CC}[0]{\mathbb{C}}
\newcommand{\ZZ}[0]{\mathbb{Z}}
\newcommand{\NN}[0]{\mathbb{N}}
\newcommand{\QQ}[0]{\mathbb{Q}}
\newcommand{\F}{\mathcal{F}}
\newcommand{\T}{\mathcal{T}}
\newcommand{\C}{\mathcal{C}}
\newcommand{\R}{\mathbb{R}}
\renewcommand{\A}{\mathcal{A}}
\newcommand{\dd}{\mathrm{DD}}
\newcommand{\e}{\mathcal{E}}
\newcommand{\del}{\partial}
\renewcommand{\setminus}{\smallsetminus}
\newcommand{\bdy}{{\partial}}
\newcommand{\from}{{\colon}}
\newcommand{\mc}[1]{\mathcal{#1}}
\newcommand{\mr}[1]{\mathrm{#1}}
\newtheorem{theorem}{Theorem}[section]
\newtheorem{lemma}[theorem]{Lemma}
\newtheorem{proposition}[theorem]{Proposition}
\newtheorem{corollary}[theorem]{Corollary}
\newtheorem{claim}[theorem]{Claim}
\theoremstyle{definition}
\newtheorem{remark}[theorem]{Remark}
\newtheorem{fact}[theorem]{Fact}
\numberwithin{equation}{section}
\begin{document}

\title{Random veering triangulations are not geometric}
\author[D. Futer]{David Futer}
\author[S.J. Taylor]{Samuel J. Taylor}
\address{Department of Mathematics\\ 
Temple University\\ 
1805 N. Broad St\\ 
Philadelphia, PA 19122}
\email{\href{mailto:dfuter@temple.edu}{dfuter@temple.edu}}
\email{\href{mailto:samuel.taylor@temple.edu}{samuel.taylor@temple.edu}}
\author[W. Worden]{William Worden}
\address{Department of Mathematics\\
Rice University MS-136\\
1600 Main St.\\
Houston, TX 77251}
\email{\href{mailto:william.worden@rice.edu}{william.worden@rice.edu}}

\date{\today}
\thanks{Futer was partially supported by NSF grants DMS--1408682 and DMS--1907708.} 
\thanks{Taylor was partially supported by NSF grants DMS--1400498 and DMS--1744551.}
\maketitle

\begin{abstract}
Every pseudo-Anosov mapping class $\varphi$ defines an associated veering triangulation $\tri_\varphi$ of a punctured mapping torus. We show that generically, $\tri_\varphi$ is not geometric. Here, the word ``generic'' can be taken either with respect to random walks in mapping class groups or with respect to  counting geodesics in moduli space. Tools in the proof include Teichm\"uller theory, the Ending Lamination Theorem, study of the Thurston norm, and rigorous computation.
\end{abstract}


\section{Introduction}\label{sec:intro}

In 2011, Agol introduced the notion of a layered veering triangulation for certain hyperbolic mapping tori \cite{Ago11}. 
Given a hyperbolic surface $S$ and a pseudo-Anosov homeomorphism $\varphi \from S \to S$, 
the mapping torus $M_\varphi$ with fiber $S$ and monodromy $\varphi$ is always hyperbolic. 
Drilling out 
 the singularities of the $\varphi$--invariant foliations on $S$ produces a punctured surface $\Scirc$ and a restricted pseudo-Anosov map $\phicirc = \varphi \vert_{\Scirc}$, whose mapping torus $\mathring{M}_\varphi = M_\phicirc$ is a surgery parent of $M_\varphi$. Agol's construction uses splitting sequences of train tracks to produce an ideal triangulation of $\mathring{M}_\varphi$ (that is, a decomposition of $\mathring{M}_\varphi$ into simplices whose vertices have been removed) called the \define{veering triangulation associated to $\varphi$}.

In \Cref{Sec:VeeringBackground}, we give a detailed description of the  veering triangulation $\tri = \tri_\varphi$ from an alternate point of view, introduced by Gu\'eritaud \cite{Gue16}. For now, we mention that $\tri$  has very strong combinatorial and topological properties.
The triangulation $\tri$ is layered, meaning that every edge is isotopic to an essential arc on the punctured fiber $\mathring{S}$. The triangulation 
$\tri$  contains a product region $\Sigma \times I$ for every large-distance subsurface  $\Sigma \subset \mathring{S}$ \cite{MT17}. 
Finally,
$\tau_\varphi$ decorated with layering data is a complete invariant of the conjugacy class $[\varphi]  \subset \Mod(S)$ \cite[Corollary 4.3]{Ago11}, which yields a fast practical solution to the conjugacy problem for pseudo-Anosovs \cite{Bel13, MSY}.
Given these combinatorial properties, it is natural to ask whether $\tri$ also has desirable geometric properties in the complete hyperbolic metric on $\mathring{M}_\varphi$. 

%
%

Since every edge of $\tri$ is homotopically non-trivial, 
it is possible to homotope every ideal tetrahedron  $\tet \subset \tri$ to a straight simplex $\tet'$, whose lift to the universal cover $\HH^3$ is the convex hull of $4$ points on $\bdy \HH^3$. This homotopy is natural, in the sense that it extends continuously to all of $\tri$. The triangulation $\tri$ is called \define{geometric} if the straightening homotopy can be accomplished by isotopy. 
Equivalently, $\tri$ is called geometric if the complete hyperbolic structure on $\mathring{M}_\varphi$ can be obtained by taking positively oriented tetrahedra in $\HH^3$ in bijection with the $3$--simplices of $\tri$, and gluing them by isometry in the combinatorial pattern of $\tri$. 

%
%
%

Agol asked whether veering triangulations are always geometric \cite[Section 5]{Ago11}. Hodgson, Issa, and Segerman showed that the answer can be negative \cite{HIS16}, by finding a veering triangulation with 13 tetrahedra, in which one tetrahedron is negatively oriented. (In the straightening homotopy, two opposite edges of this tetrahedron must pass through each other before the tetrahedron can become straight.) In describing their example, they write, 
\begin{quote}
\emph{It seems unlikely that a counterexample would have been found without a computer search, and it is still something of a mystery why veering triangulations are so frequently geometric.}
\end{quote}

It is now clear that  geometric veering triangulations are exceedingly rare. This was shown experimentally by Worden  \cite{Wor18}, who tested over 800,000 examples on a high-performance computing cluster. 
Given a hyperbolic surface $S$ of complexity $\xi(S) \geq 2$, he found that for randomly sampled long words in $\Mod(S)$, the probability of the associated veering triangulation being geometric decays exponentially with the length of the word. See \Cref{Fig:Data}.

\begin{figure}
\includegraphics[width=2.9in]{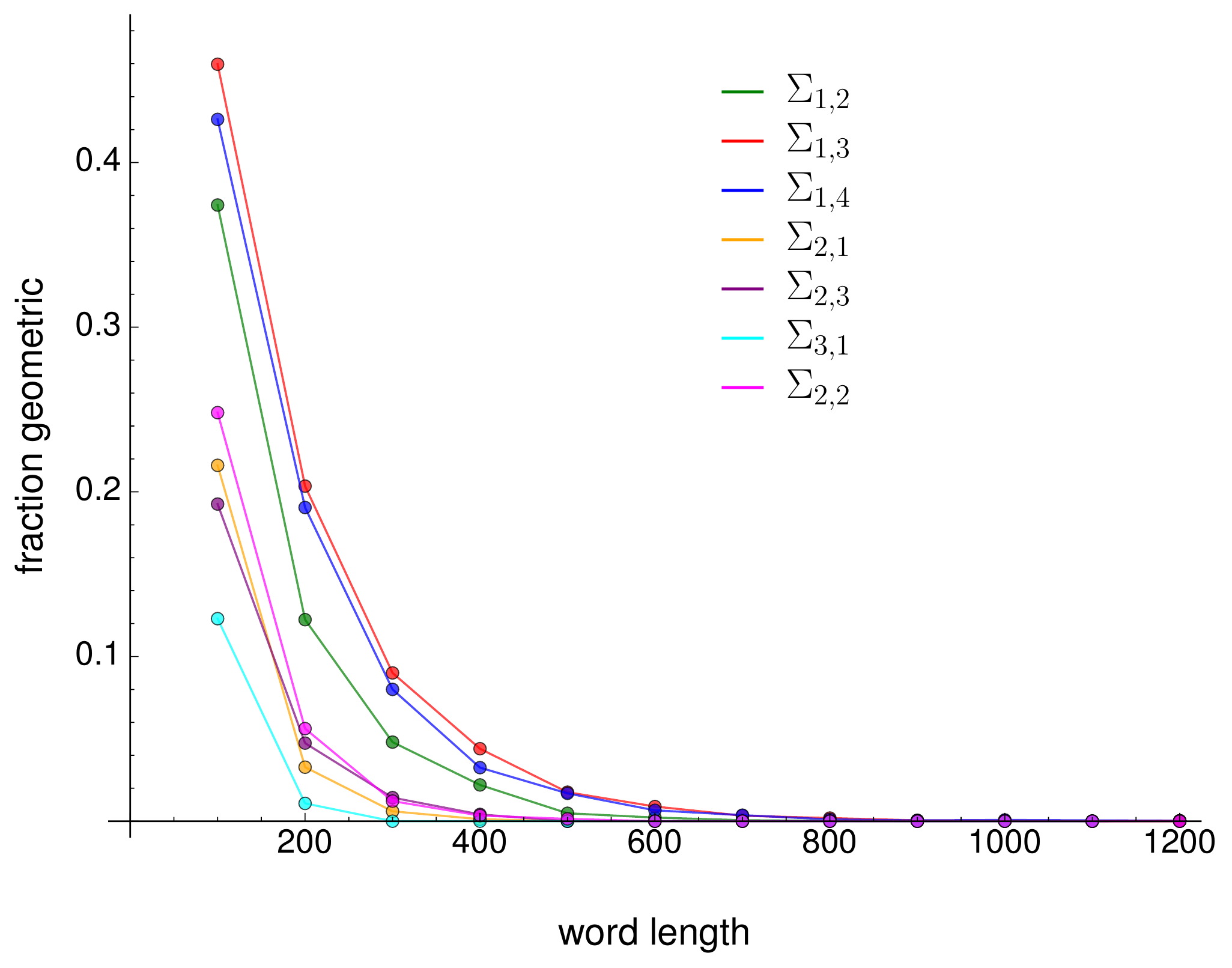}
\includegraphics[width=2.9in]{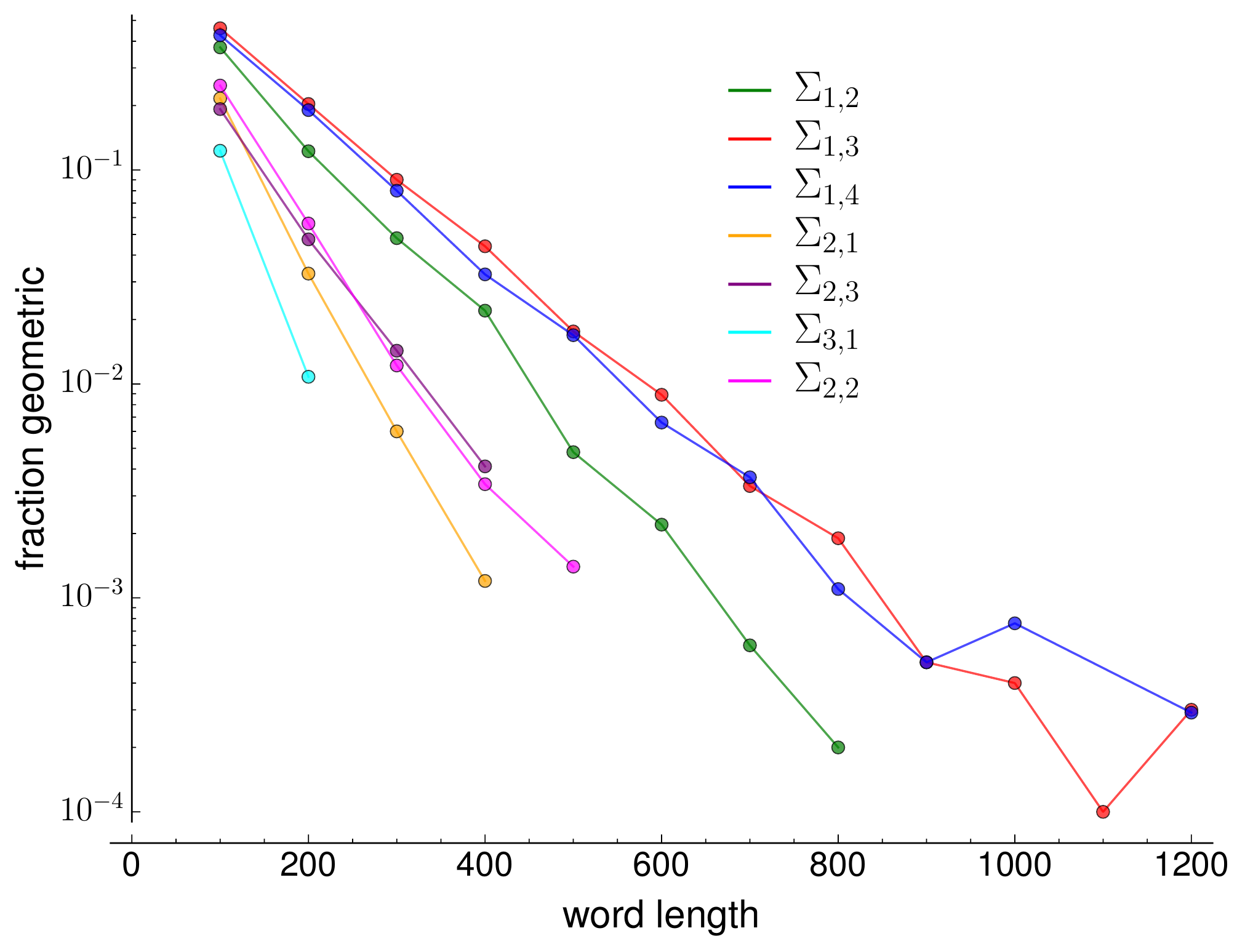}
\caption{For a simple random walk in $\Mod(S)$, with generators shown in \Cref{fig:surf_gens},
 the probability that the veering triangulation is geometric decays exponentially with the length of the walk. 
 Both graphs show the same data, with a linear plot on the left and a log-linear plot on the right. Each dot represents several thousand mapping classes. Figure from Worden \cite{Wor18}.}
\label{Fig:Data}
\end{figure}

The main result of this paper is a proof of the qualitative pattern visible in  \Cref{Fig:Data}. While we do not prove exponential decay, we do prove that the proportion of geometric triangulations decays to $0$. 
We establish this in two separate probabilistic regimes: first, with respect to  random walks on $\Mod(S)$ (\Cref{thm:genericity}), and second, with respect to counting closed geodesics in moduli space (\Cref{thm:counting}).

We use the symbol $\Sigma_{g,n}$ to denote the surface of genus $g$ with $n$ punctures. Every surface  $S$ mentioned below is presumed  homeomorphic to some $\Sigma_{g,n}$; in particular, $S$ is presumed connected and orientable. We define the \define{complexity} $\xi(\Sigma_{g,n}) = 3g - 3 +n$.

For a surface $S$ as above, we show that with overwhelming probability, 
a random walk on $\Mod(S)$ produces a pseudo-Anosov mapping class with non-geometric veering triangulation.

\begin{theorem}
\label{thm:genericity}
Let $S$ be a surface of complexity $\xi(S) \geq 2$, and consider
%
a simple random walk on  $\Mod(S)$ with respect to any finite generating set. Then, for almost every infinite sample path $(\varphi_n)$, there is a positive integer $n_0$ such that for all $n \geq n_0$, the mapping class $\varphi_n$ is pseudo-Anosov and the veering triangulation of $\mathring{M}_{\varphi_n}$ is non-geometric.
\end{theorem}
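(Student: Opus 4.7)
The plan is to identify a fixed combinatorial pattern $P$ that can sit inside a veering triangulation such that (A) whenever $\tri_\varphi$ contains a copy of $P$, at least one tetrahedron of $P$ fails to straighten to a positively oriented hyperbolic simplex, and (B) for almost every infinite sample path $(\varphi_n)$, the triangulation $\tri_{\varphi_n}$ contains a copy of $P$ for all sufficiently large $n$. Together with the standard fact that a simple random walk on $\Mod(S)$ is almost surely eventually pseudo-Anosov (Rivin, Maher), this will deliver the theorem.

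For (B), the main tool is the Minsky--Taylor product region theorem \cite{MT17}: if a proper subsurface $Y \subset \Scirc$ has large subsurface projection distance $d_Y(\fol^+,\fol^-)$ between the invariant laminations of $\varphi$, then $\tri_\varphi$ contains a product region over $Y$ that is combinatorially isomorphic to a long portion of the veering triangulation of a Teichm\"uller geodesic in $\Teich(Y)$. By Maher--Tiozzo, the simple random walk tracks a Teichm\"uller geodesic with positive drift, and its invariant lamination pair $(\fol^+(\varphi_n),\fol^-(\varphi_n))$ converges almost surely in $\PML(S) \times \PML(S)$ to a random pair of filling laminations $(\mu^+,\mu^-)$. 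Using the convergence together with continuity properties of subsurface projections along sequences of filling laminations, one concludes that almost surely there is an $n_0$ such that for every $n \geq n_0$, some subsurface of a prescribed topological type witnesses a projection distance large enough to force a copy of $P$ inside $\tri_{\varphi_n}$.

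For (A), I would extract $P$ from the veering triangulation of a specific, computer-verified non-geometric example (modeled on the Hodgson--Issa--Segerman triangulation \cite{HIS16} or a close relative from Worden's database). The shape parameters of the tetrahedra inside a product region are governed by the complete hyperbolic structure on $\mathring{M}_\varphi$, which, by the Ending Lamination Theorem, converges geometrically to the structure of the model manifold as $d_Y \to \infty$. Since non-positivity of a shape parameter is an open condition under geometric convergence, a flat or negatively oriented tetrahedron in the limiting model must persist in $\tri_{\varphi_n}$ for all large $n$. The Thurston norm framework of \cite{Ago11} enters by ensuring that the model and the sample $\varphi_n$ live on compatible fibered faces of the relevant mapping tori, so that a single combinatorial $P$ has well-defined geometric meaning in both.

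The principal obstacle, and the reason for the rigorous computation advertised in the abstract, lies in (A): producing a compact piece $P$ whose limiting shape parameters one can certify (by interval-arithmetic tetrahedral shape computations, \`a la HIKMOT) to be non-positive, and then matching the geometric-convergence threshold to the drift rate of the random walk so that the ``eventually always'' quantifier in (B) is genuinely delivered, rather than merely ``infinitely often.'' The softer monotonicity needed for the $\forall n \geq n_0$ statement should come from the fact that once the lamination pair is close to its boundary limit $(\mu^+,\mu^-)$ in $\PML(S)^2$, the subsurface projection witnessing $P$ can only continue to grow along the sample path.
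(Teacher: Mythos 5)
There is a genuine gap, and it sits at the heart of both halves of your plan. In step (A) you posit a fixed finite pattern $P$ such that \emph{any} occurrence of $P$ in a veering triangulation forces a non-positively oriented tetrahedron. No such local-to-global statement is available: the shape parameter of a tetrahedron is a cross-ratio of parabolic fixed points of the holonomy of the complete structure on $\mathring{M}_{\varphi_n}$, hence is determined by the \emph{entire} pair of ending laminations, not by any finite combinatorial neighborhood. A given finite sub-complex occurs in many veering triangulations with wildly different shapes. You try to repair this by placing $P$ inside a Minsky--Taylor product region over a subsurface $Y$ with $d_Y(\fol^+,\fol^-)\to\infty$, but large projection distance only yields \emph{some} product region whose combinatorics are dictated by the projections of the (random) invariant laminations of $\varphi_n$ to $Y$; it gives neither a copy of a \emph{prescribed} $P$ extracted from a fixed certified example, nor any statement that the shapes inside that region converge to the shapes of the certified example. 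The limiting geometry as $d_Y\to\infty$ depends on the restricted laminations of the random limit $(\mu^+,\mu^-)$, which is a random object you cannot certify by computation. This is exactly why the paper does not argue through subsurface projections: instead it fixes one certified principal pseudo-Anosov $\varphi$ with negatively oriented tetrahedron (\Cref{Prop:NongeomExists}), uses the Gadre--Maher shadowing theorem (\Cref{thm:GM}) to make the Teichm\"uller axis of $\omega_n$ fellow-travel a \emph{translate of the axis of $\varphi$ itself} for longer and longer distances, upgrades this to convergence of quadratic differentials (\Cref{cor:q-conv}), then to stabilization of finite sub-complexes (\Cref{Cor:ComplexEmbeds}) and convergence of the actual tetrahedron shapes via the parametrized Ending Lamination Theorem (\Cref{Prop:TetraShapesConverge}, \Cref{Cor:NonGeoConvergence}). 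Convergence of the full marked data to the data of the fixed example is what lets the certified strictly negative shape persist; your scheme never links the random sample to the certified example at this level.

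Two further points. First, your ``eventually always'' mechanism --- that once the lamination pair is near its boundary limit the witnessing subsurface projection ``can only continue to grow'' --- is unsupported; subsurface projections of the invariant laminations of $\omega_n$ are not monotone along a sample path. In the paper the $\forall n \ge n_0$ quantifier comes from the measure-theoretic structure of \Cref{thm:GM} (positive probability of shadowing a prescribed segment of $\gamma_\varphi$, plus ergodicity of the shift and sublinear tracking), not from any monotonicity. Second, small corrections: non-positivity of $\mathrm{Im}(z_\tet)$ is a closed condition, so persistence under convergence requires the certified shape to be \emph{strictly} negatively oriented (which is what the interval-arithmetic certification provides); and the Thurston norm does not serve to put the model and the sample ``on compatible fibered faces'' --- their mapping tori are different manifolds --- but rather, in \Cref{sec:norm}, to propagate finitely many certified examples to fibers of every topological type with $\xi(S)\ge 2$, which your proposal would still need in order to have a certified seed relevant to the given surface $S$.
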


In fact, the same result holds true for sample paths defined by a more general probability measure. See \Cref{Cor:NonGeoConvergence} for a precise statement.

We remark that every pseudo-Anosov on a surface satisfying $\xi(S) < 2$ has a geometric veering triangulation. (See  \Cref{Prop:NongeomExists} and the ensuing discussion.) Thus \Cref{thm:genericity} applies to the largest possible collection of (orientable) surfaces.



We will prove \Cref{thm:genericity} by combining two separate, logically independent ingredients.
The first ingredient is  \Cref{Prop:NongeomExists}: when $\xi(S) \geq 2$, there is at least one principal mapping class on $S$ whose associated veering triangulation is non-geometric. A pseudo-Anosov mapping class $\varphi \in \Mod(S)$ is called \define{principal} if its invariant Teichm\"uller geodesic lies in the principal stratum. (See \Cref{Sec:TeichBackground} for a discussion of strata and Teichm\"uller geodesics.) Equivalently, $\varphi$ is principal if its stable foliation has $3$--prong singularities at interior points of $S$ and $1$--prong singularities at punctures of $S$. By a theorem of  Gadre and Maher \cite{GM17}, principal pseudo-Anosovs are generic from the point of view of random walks in $\Mod(S)$.

The second ingredient is a convergence result, \Cref{thm:convergence}, which shows that every principal mapping class occurs in a suitable sense as the limit of a random process, where the combinatorics of the triangulation and the geometry of the mapping torus both converge to the desired limit. This result works for any hyperbolic surface. In particular, given a principal mapping class $\varphi$ with non-geometric veering triangulation, almost every sample path of a random walk also has non-geometric veering triangulation.

\smallskip

%

By replacing random walk techniques with work of Hamenst\"adt \cite{hamenstadt2013bowen} and Eskin--Mirzakhani \cite{eskin2011counting}, we prove our second result concerning the scarcity of geometric veering triangulations. For $L > 0$, let $\mathcal{G}(L)$ be the finite set of conjugacy classes of pseudo-Anosov mapping classes in $\Mod(S)$ whose Teichm\"uller translation length is at most $L$. 
Equivalently, $\mathcal{G}(L)$ is the set of all conjugacy classes of pseudo-Anosovs whose dilatation is at most $e^{L}$.
Recall that the veering triangulation $\tri_\varphi$ of the punctured mapping torus $\mathring M_\varphi$ only depends on the conjugacy class $[\varphi]$, i.e.\ on an element of $\mathcal{G}(L)$ for some $L$.

\begin{theorem} \label{thm:counting}
Let $S$ be a surface with complexity $\xi(S) \ge 2$. Then
\[
\lim_{L \to \infty} \,
\frac{1}{|\mathcal{G}(L)|} \left |\left \{[\varphi] \in \mathcal{G}(L) : \text{the veering triangulation of } \mathring M_\varphi \text{ is not geometric} \right \} \right| 
\: = \: 1.
\]
\end{theorem}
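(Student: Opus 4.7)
The plan is to combine the same two ingredients used in the proof of \Cref{thm:genericity}, but to replace the random walk equidistribution with equidistribution of closed Teichm\"uller geodesics in moduli space, due to Hamenst\"adt \cite{hamenstadt2013bowen} and Eskin--Mirzakhani \cite{eskin2011counting}.

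First, by \Cref{Prop:NongeomExists}, fix a principal pseudo-Anosov $\varphi_0 \in \Mod(S)$ whose veering triangulation $\tri_{\varphi_0}$ is non-geometric, witnessed by a single tetrahedron $\tet_0$, and let $[q_0]$ be the corresponding point in the principal stratum of moduli space. Next, I would extract from (the proof of) the convergence result \Cref{thm:convergence} an open neighborhood $U$ of $[q_0]$ in the principal stratum with the following property: for every pseudo-Anosov $\varphi \in \Mod(S)$ whose Teichm\"uller axis projects to visit $U$, the veering triangulation $\tri_\varphi$ contains a finite combinatorial sub-piece isomorphic to a neighborhood of $\tet_0$ in $\tri_{\varphi_0}$. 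Since non-geometricity is a property detected by the local combinatorics of the veering triangulation, every such $\tri_\varphi$ is itself non-geometric.

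Conjugacy classes in $\mathcal{G}(L)$ correspond bijectively to closed Teichm\"uller geodesics of length at most $L$ in moduli space. By the Hamenst\"adt--Eskin--Mirzakhani theorem, the normalized counting measure on these geodesics equidistributes (as $L \to \infty$) to the Masur--Veech measure on the principal stratum. In particular, because $U$ has positive Masur--Veech measure, the proportion of $[\varphi] \in \mathcal{G}(L)$ whose axis visits $U$ tends to $1$ as $L \to \infty$. Combined with the previous paragraph, this proves the theorem.

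The main obstacle I expect is making step two precise. As stated, \Cref{thm:convergence} concerns random processes rather than arbitrary closed geodesics, so one must isolate from its proof the deterministic fact that the combinatorial pattern of the veering triangulation, in a neighborhood of a point along the Teichm\"uller axis, depends continuously on the underlying quadratic differential. A further subtlety is that the finite piece of $\tri_\varphi$ isomorphic to a neighborhood of $\tet_0$ lives naturally in the infinite cyclic cover $\mathring{S} \times \RR$; one needs to shrink $U$ to guarantee that $\varphi$'s axis cannot return to $U$ before the finite piece is fully traversed, so that the non-geometric tetrahedron descends injectively into $\mathring{M}_\varphi$ rather than being identified with itself.
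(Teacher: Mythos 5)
Your overall architecture matches the paper's: an explicit non-geometric principal example (\Cref{Prop:NongeomExists}), an open set in the space of quadratic differentials whose visitation forces non-geometricity, and the Hamenst\"adt / Eskin--Mirzakhani equidistribution and counting results. However, two of your steps have genuine gaps. First, non-geometricity is \emph{not} detected by the local combinatorics of the veering triangulation. A negatively oriented tetrahedron is a statement about the shape parameter (the imaginary part of a cross-ratio) in the solution of the gluing and completeness equations giving the complete hyperbolic structure on $\mathring M_\varphi$; knowing that $\tri_\varphi$ contains a sub-complex combinatorially isomorphic to a neighborhood of $\tet_0$ in $\tri_{\varphi_0}$ says nothing about the sign of that tetrahedron's shape in the \emph{new} manifold. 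What is actually needed --- and what the paper proves --- is that the hyperbolic shapes of the tetrahedra in the sub-complex converge (\Cref{Prop:TetraShapesConverge}), which rests on the parametrized Ending Lamination Theorem (\Cref{Thm:HomeoELC}) applied to the doubly degenerate structures on $\Scirc \times \RR$, combined with \Cref{lem:qd_converge} and \Cref{lem:DrillQuadDiff} to turn ``the axis visits a small neighborhood / fellow-travels for a long time'' into convergence of quadratic differentials. This is the content of \Cref{cor:open}, and without it your second paragraph fails. (Incidentally, your worry about injectivity into $\mathring M_\varphi$ is not needed here: a negatively oriented tetrahedron in the cyclic cover $N_{q}$ maps to a tetrahedron of the same shape in $\mathring M_\varphi$, so non-geometricity descends regardless of whether the whole finite piece embeds.)

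Second, the deduction ``$U$ has positive Masur--Veech measure, hence the proportion of $[\varphi]\in\mathcal G(L)$ whose axis visits $U$ tends to $1$'' does not follow directly from the equidistribution theorem, and this is where most of the paper's work in \Cref{sec:counting} lies. Hamenst\"adt's result gives weak convergence of the measures $\nu_L = \frac{1}{L|\mathcal G(L)|}\sum_{\gamma\in\mathcal G(L)}\delta(\gamma)$, which weight each closed orbit by its \emph{length}, not of a normalized counting measure on orbits; by itself, together with the Portmanteau theorem, it only gives $\liminf_L \nu_L(U)\ge\lambda(U)>0$, i.e.\ a positive length-weighted proportion, not proportion tending to $1$. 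To get the full statement one must show that the union $A$ of closed orbits disjoint from $V$ has closure that is flow-invariant and disjoint from the open set $V$, hence of $\lambda$-measure zero by ergodicity and full support of the Masur--Veech measure; then convert the resulting bound $\limsup_L\nu_L(\overline A)=0$ into a \emph{counting} bound on the number of orbits avoiding $V$. That conversion is not automatic because short orbits contribute little mass to $\nu_L$: the paper handles it with the cutoff at length $L-\sigma$ and the Eskin--Mirzakhani asymptotic $|\mathcal G(L)|\, hLe^{-hL}\to 1$, which makes $|\mathcal G(L-\sigma)|/|\mathcal G(L)|\to e^{-h\sigma}$ small. Your ``in particular'' elides exactly this ergodicity-plus-cutoff argument, so as written the final step does not yet prove the theorem.
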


Just as with  \Cref{thm:genericity}, the proof of \Cref{thm:counting} combines an existence statement with a convergence statement. The existence statement is again  \Cref{Prop:NongeomExists}: there is a principal mapping class $\varphi \in \Mod(S)$ whose associated veering triangulation is non-geometric. The convergence statement roughly says that the axis of a typical element of $\mathcal{G}(L)$ fellow-travels the axis of $\varphi$ for a very long distance. This statement, combined with ingredients from the proof of \Cref{thm:convergence}, implies the desired result. 
We refer to \Cref{sec:counting} for more details.

\subsection{Existence of non-geometric triangulations}

As described above, we begin the proof of \Cref{thm:genericity,thm:counting} by finding some pseudo-Anosov element of $\Mod(S)$ whose associated veering triangulation is non-geometric. In fact, we show the following.

\begin{theorem}\label{Prop:NongeomExists}
Let $S \cong \Sigma_{g,n}$ be a hyperbolic surface. Then  $\xi (S) \geq 2$ if and only if
 there exists a principal pseudo-Anosov $\varphi \in \Mod(S)$ such that the associated veering triangulation of the mapping torus $\mathring M_{\varphi}$ is non-geometric.
\end{theorem}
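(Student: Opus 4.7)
The plan is to prove the two implications of \Cref{Prop:NongeomExists} separately.

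For the reverse direction (non-geometric example $\Rightarrow \xi(S) \geq 2$), the hyperbolic surfaces with $\xi(S) < 2$ are $\Sigma_{0,3}, \Sigma_{0,4}$, and $\Sigma_{1,1}$. Since $\Mod(\Sigma_{0,3})$ is finite, there is nothing to check there. For $\Sigma_{0,4}$ and $\Sigma_{1,1}$, every pseudo-Anosov $\varphi$ has a Teichm\"uller geodesic that crosses a sequence of Farey triangulations, and $\tri_\varphi$ is the classical Floyd--Hatcher / Jorgensen monodromy ideal triangulation, whose tetrahedra are layered ``Farey flips''. Each such tetrahedron is positively oriented in the complete hyperbolic structure on $\mathring M_\varphi$ (Gu\'eritaud, Lackenby, Akiyoshi), so $\tri_\varphi$ is geometric. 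I would simply cite this.

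For the forward direction, the plan is to produce a few \emph{seed} examples of small complexity and then propagate. The seed step relies on Worden's computer enumeration: I would exhibit a principal pseudo-Anosov $\varphi_0$ on each of $S_0 = \Sigma_{0,5}$ and $S_0 = \Sigma_{1,2}$ whose veering triangulation contains at least one straightened tetrahedron that is flat or negatively oriented. Principality can be read off combinatorially from the stratum data encoded in the veering triangulation, namely from the valences of the cusp links over the singular points of the invariant foliations. The non-geometric property is verified rigorously using interval arithmetic (e.g.\ HIKMOT or Snap's verified shape computation) applied to the hyperbolic shapes of $\tri_{\varphi_0}$; it suffices that the strict inequality ``this tetrahedron is not positively oriented'' hold for one tetrahedron, which is an open condition on the shape parameters and hence survives rigorous certification.

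To propagate, for any other hyperbolic $S$ with $\xi(S) \geq 2$, I would embed a copy of $S_0$ (either $\Sigma_{0,5}$ or $\Sigma_{1,2}$, chosen so the embedding exists) as an essential subsurface of $\mathring S$, and then build a principal pseudo-Anosov $\varphi \in \Mod(S)$ via a Penner--Thurston construction whose subsurface projection data to $S_0$ has arbitrarily large diameter and matches that of $\varphi_0$. By the Minsky--Taylor product-region theorem \cite{MT17}, $\tri_\varphi$ then contains a combinatorial product region $S_0 \times I$ that is isomorphic, as a subcomplex, to the portion of $\tri_{\varphi_0}$ determined by the matched projection. By the Ending Lamination Theorem together with the convergence machinery that will be developed to prove \Cref{thm:convergence}, the hyperbolic shapes of the tetrahedra in this product region converge to the shapes of the corresponding tetrahedra in $\mathring M_{\varphi_0}$ as the projection diameter grows. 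Since non-geometricity of a tetrahedron is an open condition on its shape, for sufficiently large projection the transplanted copy of the seed non-geometric tetrahedron is still non-geometric in $\mathring M_\varphi$, completing the proof.

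The hard part is the propagation step. Combinatorial embedding via the product-region theorem is standard, but upgrading it to a geometric statement about tetrahedron shapes requires controlling the hyperbolic geometry of $\mathring M_\varphi$ along the product region uniformly as the subsurface projection grows; this is exactly the content of the convergence apparatus behind \Cref{thm:convergence}. A secondary subtlety is arranging that the Penner--Thurston construction yields a \emph{principal} mapping class on $S$ (not merely a pseudo-Anosov) while still prescribing the $S_0$-projection: this requires choosing the auxiliary curves filling the complement of $S_0$ so that no new interior singularity of prong order $\geq 4$ is created, which can be arranged by a generic choice of the filling system.
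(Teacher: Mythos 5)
Your reverse direction and the general shape of the seed step match the paper, but the propagation step has a genuine gap that I do not think can be repaired as stated. You want to embed the seed surface $S_0$ as an essential subsurface of $\mathring S$, force a large subsurface projection matching $\varphi_0$, invoke the Minsky--Taylor product region, and then claim that the hyperbolic shapes of the tetrahedra in that product region converge to their shapes in $\mathring M_{\varphi_0}$. That last claim is false: as the projection distance to $S_0$ grows, the geometry of $\mathring M_\varphi$ (or its cyclic cover) along the product region degenerates so that the curves of $\partial S_0$ become short and, in the geometric limit, parabolic. The limit is a doubly degenerate structure on $S_0 \times \RR$ with accidental parabolics along $\partial S_0$ --- a different hyperbolic manifold from $\mathring N_{\varphi_0}$, which has no such parabolics. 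So the transplanted copy of your seed tetrahedron has shape converging to its shape in this other manifold, about which your certificate of non-geometricity says nothing. This is precisely why the convergence machinery behind \Cref{thm:convergence} (\Cref{Cor:ComplexEmbeds}, \Cref{Prop:TetraShapesConverge}) is formulated for a sequence $q_n \to q$ in $\QD(\Scirc)$ on a \emph{fixed} surface and cannot be applied across a subsurface inclusion. A secondary gap is in the seed certification: verifying by interval arithmetic that some solution of the gluing and completeness equations has a negatively oriented shape does not show the triangulation is non-geometric, because that solution need not be the one giving the complete structure; one must certify that it is, which the paper does via a Pachner path to a verified geometric triangulation together with the volume rigidity results of Francaviglia and Neumann--Yang (\Cref{Thm:Francaviglia}, \Cref{Thm:NeumannYang}, \Cref{cor:certify-non-geometric}).

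For contrast, the paper's propagation avoids all geometric limits. It fixes a handful of certified seed monodromies (on $\Sigma_{2,0}$, $\Sigma_{2,1}$, $\Sigma_{1,2}$, $\Sigma_{0,5}$, $\Sigma_{0,6}$, $\Sigma_{0,7}$) and then works inside the Thurston norm ball of each seed mapping torus: every fiber in the same fibered cone induces the \emph{same} veering triangulation of the \emph{same} drilled manifold (\Cref{Lem:FaceInvariance}), so non-geometricity transfers tautologically, and principality of the new monodromies is checked by intersecting boundary slopes with the degeneracy slope (\Cref{Lem:DegeneracySlope}) together with explicit homological computations; finite covers (\Cref{lem:lift_to_covers}) handle closed surfaces and the non-coprime cases. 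If you want to keep your subsurface-projection idea, you would need a genuinely new geometric control statement for pockets with boundary, which is much harder than the Thurston norm argument; I recommend switching to the fibered face approach.
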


The ``if'' direction of \Cref{Prop:NongeomExists} is previously known. The only (connected, orientable) hyperbolic surfaces with $\xi(S) < 2$ are $\Sigma_{0,3}, \Sigma_{0,4}$, and $\Sigma_{1,1}$.  Akiyoshi \cite{Aki99} and Lackenby \cite{Lackenby:Bundle} proved that  all pseudo-Anosov mapping classes on $\Sigma_{1,1}$ and $\Sigma_{0,4}$  have geometric veering triangulations. Gu\'eritaud  gave a direct argument for the same conclusion \cite{Gue06}.  Meanwhile, $\Mod(\Sigma_{0,3})$ is finite, hence $\Sigma_{0,3}$ has no pseudo-Anosov mapping classes at all.  Thus the new content of \Cref{Prop:NongeomExists} is the ``only if'' direction of the statement.


%
%

In the proof of  \Cref{Prop:NongeomExists}, we characterize geometric triangulations using shape parameters. Given a tetrahedron $\tet \subset \mathring{M}$, endowed with an ordering of its ideal vertices, we lift $\tet$ to $\HH^3$ and define the \define{shape parameter} $z_\tet$ to be the cross-ratio of the 4 vertices on the sphere at infinity. The cross-ratio $z_\tet$ determines the isometry type of the straightened tetrahedron $\tet'$ homotopic to $\tet$. In particular, $\tet'$ is positively oriented if and only if $\mathrm{Im}(z_\tet) > 0$. Shape parameters can be computed using  \texttt{Snappy} \cite{CDGW09}, in either floating-point or interval arithmetic, making it possible to test whether a particular triangulation $\tri$ is geometric.

To prove the ``only if'' direction of  \Cref{Prop:NongeomExists} for a finite list of fiber surfaces, we essentially follow the method of Hodgson--Issa--Segerman \cite{HIS16}. We find a suitable mapping class $\varphi \in \Mod(S)$ using a brute-force search, and use \texttt{flipper} \cite{Bel13} to certify that $\varphi$ is a principal pseudo-Anosov. Then, we use rigorous interval arithmetic in  \texttt{Snappy}, including routines derived from \texttt{HIKMOT}  \cite{HIKMOT16}, to certify that the shape parameter of each $\tet \subset \tri_\varphi$ lies inside a small box in $\CC$. One of these boxes has strictly negative imaginary part, implying that $\tri_\varphi$ is non-geometric. See  \Cref{sec:poison} for details.

To extend our knowledge from finitely many surfaces to \emph{all} the surfaces in \Cref{Prop:NongeomExists}, 
we exploit the fact that many fibered $3$--manifolds fiber in infinitely many ways, organized via the Thurston norm. 
(See \Cref{sec:norm} for definitions and further details.) All the fibers that appear in a single fibered cone of the Thurston norm ball 
have associated monodromies that induce the same veering triangulation of the same drilled manifold $\mathring{M}$. As a consequence, we can prove \Cref{Prop:NongeomExists} for all $g\ge 1$, $n\ge 1$ (excluding $\Sigma_{1,1}$), using only two explicit examples. That is, we find two fibered manifolds with principal pseudo-Anosovs whose veering triangulations are non-geometric, and show that every such surface $\Sigma_{g,n}$, appears as (a cover of) a fiber for at least one of our two examples. The verification that these two fibered manifolds have all the desired properties is assisted by \texttt{Regina} \cite{BBP99}.
Similar tricks handle the other surfaces with $\xi(S) \geq 2$, namely closed surfaces and punctured spheres.


\subsection{Convergence to any principal pseudo-Anosov}

The following convergence theorem is the main technical result of this paper. Although  the statement here is for the random walk model, it is derived from a more general result (\Cref{Prop:TetraShapesConverge}) that also applies to counting geodesics in moduli space.

Consider a probability measure $\mu$ on $\Mod(S)$. We use the notation $\semigroup{\mu}$ to denote the semigroup generated by the support of $\mu$. Say that $\semigroup{\mu}$ is \define{non-elementary} if it contains at least two pseudo-Anosov elements with distinct axes. In the setting of a simple random walk, $\mu$ is the uniform probability measure on a symmetric generating set, hence $\semigroup{\mu} = \Mod(S)$ is non-elementary.

\begin{theorem}
\label{thm:convergence}
Let $S$ be a hyperbolic surface, and fix a principal pseudo-Anosov $\varphi \in \Mod(S)$. Lift the veering triangulation $\tri_\varphi$ of the mapping torus $\mathring M_{\varphi}$ to a triangulation $\widetilde \tri$ of the infinite cyclic cover $\mathring N_{\varphi}$, corresponding to the fiber. Let $K \subset \widetilde \tri$ be any finite, connected sub-complex.

Let $\mu$ be a probability distribution on $\Mod(S)$ with finite first moment, such that $\semigroup{\mu}$ is non-elementary and contains $\varphi$.
Then, for almost every sample path $\omega = (\omega_n)$, there is a positive integer $n_0$ such that the following hold:
\begin{itemize}
\item For all $n \geq n_0$, $\omega_n$ is a principal pseudo-Anosov.
\item For all $n \geq n_0$, $K$ embeds as a sub-complex of  the veering triangulation $\tri_{\omega_n}$ of  the mapping torus $\mathring M_{\omega_n}$.
\item For every tetrahedron $\tet \subset K$, the shape of $\tet$ in $\mathring M_{\omega_n}$ converges to the shape of $\tet$ in $\mathring N_{\varphi}$ as $n \to \infty$.
\end{itemize}
\end{theorem}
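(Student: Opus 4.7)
The strategy is to use Gu\'eritaud's description of $\tri_\varphi$ as recording the bi-infinite sequence of L-shaped diagonal flips along the Teichm\"uller geodesic $\gamma_\varphi$ of $\varphi$. Lifted to the infinite cyclic cover, $\widetilde\tri$ corresponds to this bi-infinite flip sequence, so the finite sub-complex $K\subset\widetilde\tri$ is determined by a finite interval of flips on a compact arc of $\gamma_\varphi$. All three bullets will follow from the existence, for almost every sample path, of $n_0$ so that for all $n\geq n_0$ the axis of $\omega_n$ fellow-travels a translate of $\gamma_\varphi$ on an interval covering $k_n$ fundamental domains of $\varphi$, with $k_n\to\infty$. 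The first bullet (principality) is then automatic from the Gadre--Maher genericity theorem for principal pseudo-Anosovs.

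The first step is a combinatorial subword argument. Since $\varphi\in\semigroup{\mu}$, write $\varphi=a_1\cdots a_\ell$ with each $a_i\in\support(\mu)$ carrying positive $\mu$-mass. For each $k\geq 1$, a block of $k\ell$ consecutive increments of the walk spells the concatenated word for $\varphi^k$ with probability $\prod\mu(a_i)^k>0$. Applying the second Borel--Cantelli lemma to disjoint blocks shows that, almost surely and for each fixed $k$, at least one such block appears within the first $X_k<\infty$ steps; setting $k_n := \max\{k : X_k \leq n\}$ yields an almost surely non-decreasing sequence with $k_n\to\infty$, together with a decomposition $\omega_n=A_n\varphi^{k_n}B_n$ valid for all large $n$.

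The second step promotes this combinatorial subword structure to geometric fellow-traveling. After conjugating by $A_n$, it suffices to show that the axis of $\varphi^{k_n}B_nA_n$ shadows $\gamma_\varphi$ on at least $k_n - O(1)$ fundamental domains with width tending to zero. Since $\varphi$ is principal, $\gamma_\varphi$ lies in the thick part of $\Teich(S)$, and I would appeal to Rafi-style contraction for thick Teichm\"uller geodesics, combined with the Kaimanovich--Masur/Tiozzo drift theorem for the random tail $B_nA_n$, to force this shadowing. The main obstacle is obtaining uniform control on the ``redirection'' caused by $B_nA_n$ near the $\varphi^{k_n}$-subword; a careful argument using axis-projection techniques in the style of Bestvina--Fujiwara, paired with the uniform contraction of thick geodesics, will be needed to ensure that all but boundedly many of the $k_n$ copies of $\varphi$ translate faithfully onto the axis of the conjugate.

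Finally, once $\omega_n$'s axis fellow-travels $\gamma_\varphi$ on $k_n\to\infty$ fundamental domains, Gu\'eritaud's construction identifies the local L-flip sequences; as soon as $k_n$ exceeds the $\ZZ$-height of $K$, this gives the embedding $K\hookrightarrow\tri_{\omega_n}$ of the second bullet. For the shape-parameter convergence in the third bullet, I would invoke the Ending Lamination Theorem together with a pointed geometric limit argument: choosing basepoints $\ast_n$ inside the embedded copy of $K\subset\mathring M_{\omega_n}$, the infinite cyclic covers $(\mathring N_{\omega_n},\ast_n)$ converge geometrically to $(\mathring N_\varphi,\ast)$, because arbitrarily large neighborhoods of the basepoint are determined (via the ELT) by an arbitrarily long common initial segment of the ending-lamination data arising from the fellow-traveling segment. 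Continuity of tetrahedron shape parameters under pointed geometric convergence then yields the claimed limit, completing the proof.
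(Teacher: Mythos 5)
Your overall architecture (long fellow-traveling of axes $\Rightarrow$ shared subcomplexes of veering triangulations $\Rightarrow$ Ending Lamination Theorem for shape convergence) is in the same spirit as the paper, but the way you propose to establish the fellow-traveling has a genuine gap. The factorization $\omega_n = A_n\varphi^{k_n}B_n$ produced by Borel--Cantelli carries, by itself, no geometric content: every element of $\Mod(S)$ admits such a factorization for every $k$, so everything must come from probabilistic control of $A_n$ and $B_n$, and that control is exactly what is missing from your sketch. Concretely: (i) with disjoint blocks of length $k\ell$, the waiting time for a block spelling $\varphi^k$ is of order $\big(\prod_i \mu(a_i)\big)^{-k}$, so your $k_n$ grows only logarithmically in $n$, whereas Tiozzo's sublinear tracking of the sample path by $\gamma_\omega$ only gives an error $o(n)$ at the location of the block; the tracking error therefore swamps the block length, and one cannot conclude that $\gamma_\omega$ (let alone the axis of $\omega_n$) fellow-travels the relevant translate of $\gamma_\varphi$ along the block. (ii) Your claim that the axis of $\varphi^{k_n}B_nA_n$ shadows $\gamma_\varphi$ for $k_n-O(1)$ fundamental domains cannot hold with a uniform $O(1)$: the element $B_nA_n$ has word length of order $n \gg k_n$, and the loss of shadowing depends on how it sits relative to the axis, which nothing in your argument bounds. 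This is precisely the difficulty that \Cref{thm:GM} (reassembled from Gadre--Maher) is designed to overcome, and it does so by a different mechanism: \Cref{lem:GM} gives \emph{positive probability} that the limiting geodesic $\gamma_\omega$ itself passes through prescribed balls on the axis, ergodicity of the shift places such an event in the middle range of the path, and \Cref{prop:DH} together with Rafi's fellow-traveling theorem transfers this to the axis of $\omega_n$; sublinear tracking is only ever compared against linear scales. You should quote (or genuinely reprove) that statement rather than rely on subword spelling.

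Even granting the fellow-traveling, the remaining steps are skipped rather than sketched. Fellow-traveling at bounded Teichm\"uller distance does not by itself identify tetrahedra of the two veering triangulations: one needs actual convergence of quadratic differentials along the axes (\Cref{lem:qd_converge}, which uses unique ergodicity), the passage to marked differentials on the punctured surface (\Cref{lem:DrillQuadDiff} --- nontrivial, since markings do not lift canonically), and a continuity/openness statement for the veering triangulation as a function of $q$ (\Cref{Cor:ComplexEmbeds}); your ``same L-flip sequence'' shortcut conflates these. The second bullet also asks for $K$ to embed in the mapping torus $\mathring M_{\omega_n}$, not merely in the triangulation of $\Scirc\times\RR$; the paper needs a separate argument (linear growth of translation length, via Maher) to rule out the deck transformation identifying pieces of $K$, while your final paragraph simply assumes this embedding when choosing basepoints. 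Finally, for the third bullet the paper uses the parametrized Ending Lamination Theorem to convert convergence of end invariants (again coming from $q$-convergence) into algebraic convergence of the doubly degenerate representations, after which shapes converge because cross-ratios of parabolic fixed points do (\Cref{Prop:TetraShapesConverge}); your assertion that ``large neighborhoods of the basepoint are determined by a long common initial segment of the ending-lamination data'' is not a statement of the ELT and would require the bi-Lipschitz model machinery to make precise, so the shape-convergence step as written is also incomplete.
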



The argument used to prove \Cref{thm:convergence} can be summarized as follows.
The first two conclusions are proven by combining a result about fellow-traveling of sample paths in Teichm\"uller space (\Cref{thm:GM}, due to Gadre--Maher \cite{GM17}), together with \Cref{Cor:ComplexEmbeds}. Informally, \Cref{Cor:ComplexEmbeds} states that if appropriate quadratic differentials $q_i$ converge to $q$, then their associated veering triangulations also converge, in the sense appearing in \Cref{thm:convergence}. A more precise formulation of this result requires Gu\'eritaud's construction of the veering triangulation (given in \Cref{Sec:VeeringBackground}), and is postponed until \Cref{sec:convergence}. The upshot is that if $\varphi$ and $\psi$ are pseudo-Anosov homeomorphisms whose axes in Techm\"uller space fellow travel for sufficiently long, then their associated veering triangulations of $\mathring{M}_\varphi$ and $\mathring{M}_\psi$ have large isomorphic subcomplexes. 

The third conclusion of \Cref{thm:convergence} follows by relating the convergence of the quadratic differentials referenced above to the algebraic convergence of the hyperbolic structures on the associated manifolds. The main tool for this is the Ending Lamination Theorem of Brock--Canary--Minsky \cite{Min10, BCM12} together with a strengthening by Leininger--Schleimer \cite{LS09}. In short, algebraic convergence of the surface group representations yields convergence in $\bdy \mathbb{H}^3$ of the ideal endpoints of the veering tetrahedra, which means that the shapes of these tetrahedra converge as desired. The details are given in \Cref{sec:elc}.

One particular consequence of \Cref{thm:convergence} is the following statement.

\begin{corollary}\label{Cor:NonGeoConvergence}
Let $S$ be a hyperbolic surface. Let $\varphi \in \Mod(S)$ be a principal pseudo-Anosov  whose veering triangulation  $\tri_\varphi$ is non-geometric.

Let $\mu$ be a probability distribution on $\Mod(S)$ with finite first moment, such that $\semigroup{\mu}$ is non-elementary and contains $\varphi$.
Then, for almost every infinite sample path $\omega = (\omega_n)$, there is a positive integer $n_0$ such that for all  $n \geq n_0$, the veering triangulation $\tri_{\omega_n}$ is also non-geometric.
\end{corollary}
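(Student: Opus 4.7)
My plan is to apply \Cref{thm:convergence} to a single tetrahedron that witnesses non-geometricity of $\tri_\varphi$, and then use openness of the condition $\mathrm{Im}(z) < 0$ in $\CC$ to transfer that witness to $\tri_{\omega_n}$ for large $n$. Concretely, since $\tri_\varphi$ is non-geometric, I would pick a tetrahedron $\tet_0 \subset \tri_\varphi$ whose straightening is negatively oriented, so that $\mathrm{Im}(z_{\tet_0}) < 0$. Then lift $\tet_0$ to a tetrahedron $\widetilde{\tet}_0$ in the infinite cyclic cover $\mathring N_\varphi$ and set $K := \widetilde{\tet}_0 \subset \widetilde \tri$; this is a finite connected sub-complex consisting of a single simplex, and so is a legal input to \Cref{thm:convergence}.

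Next, I would feed this $K$ into \Cref{thm:convergence}. The hypotheses on $\mu$ match those of the corollary verbatim, so the theorem applies directly. Its conclusion provides, for almost every sample path $\omega = (\omega_n)$, an integer $n_0$ such that for all $n \geq n_0$: first, $\omega_n$ is a principal pseudo-Anosov, so $\tri_{\omega_n}$ is well-defined; second, $K$ embeds as a sub-complex of $\tri_{\omega_n}$, identifying $\widetilde{\tet}_0$ with a distinguished tetrahedron $\tet_0^{(n)} \subset \tri_{\omega_n}$; and third, the shape parameter of $\tet_0^{(n)}$ in $\mathring M_{\omega_n}$ converges to $z_{\tet_0}$. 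Because the open lower half-plane is an open subset of $\CC$, the third conclusion forces $\mathrm{Im}(z_{\tet_0^{(n)}}) < 0$ for all $n$ past some $n_1 \geq n_0$. Hence $\tet_0^{(n)}$ is negatively oriented inside $\tri_{\omega_n}$, which immediately implies that $\tri_{\omega_n}$ is non-geometric, as required.

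The only subtle point is whether the bare hypothesis ``$\tri_\varphi$ is non-geometric'' really allows me to select $\tet_0$ with $\mathrm{Im}(z_{\tet_0})$ \emph{strictly} negative, rather than merely nonpositive; the openness argument above breaks down for a flat tetrahedron with $\mathrm{Im}(z_{\tet_0}) = 0$. This is the step I expect to be the main obstacle in an unconditional proof, as it seems to require a separate argument ruling out accidental flat tetrahedra in a veering triangulation of a hyperbolic mapping torus. Fortunately, in every application of this corollary in the present paper, the principal pseudo-Anosov $\varphi$ comes from \Cref{Prop:NongeomExists}, whose non-geometricity witness is an explicit tetrahedron with rigorously certified strictly negative shape parameter, so the required strict inequality is automatic and no further work is needed.
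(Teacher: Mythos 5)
Your proposal is essentially identical to the paper's proof: the paper also takes $K$ to be (the lift of) a single tetrahedron $\tet \subset \tri_\varphi$ whose shape is negatively oriented, applies \Cref{thm:convergence}, and uses convergence of shapes to a negatively oriented limit to conclude that $\tet$ is negatively oriented in $\mathring M_{\omega_n}$ for $n \gg 0$. The subtlety you flag about a possible flat tetrahedron with $\mathrm{Im}(z_{\tet_0}) = 0$ is implicitly elided in the paper as well, since its proof simply selects a strictly negatively oriented tetrahedron, and, as you note, in every application the witness from \Cref{Prop:NongeomExists} is certified to have strictly negative imaginary part.
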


\begin{proof}
Let $K \subset \widetilde \tau$ be (the lift to $\mathring N_\varphi$ of)  a single tetrahedron $\tet \subset \tau_\varphi$ whose shape  is negatively oriented. \Cref{thm:convergence} says that $\tet$ also appears as a tetrahedron in $\tri_{\omega_n}$ for $n \gg 0$. Furthermore, the shape of $\tet$ in $\mathring M_{\omega_n}$ converges to a negatively oriented limit as $n \to \infty$, hence $\tet$ has to be negatively oriented in $\mathring M_{\omega_n}$ for all $n  \gg 0$.
\end{proof}

Now, observe that \Cref{thm:genericity} follows immediately by combining \Cref{Prop:NongeomExists} with \Cref{Cor:NonGeoConvergence}. The proof of \Cref{thm:counting} follows a similar pattern, but replaces \Cref{Cor:NonGeoConvergence} with \Cref{cor:open}. See \Cref{sec:counting} for the full details.

\subsection{Organization}
\Cref{sec:background} lays out definitions and background material from Teichm\"uller theory that will be needed in most of the subsequent arguments.  

The proof of \Cref{thm:convergence} spans \Cref{sec:qd_convergence,sec:transition,sec:convergence,sec:elc}. We discuss convergence of quadratic differentials in \Cref{sec:qd_convergence,sec:transition}, convergence of veering triangulations in \Cref{sec:convergence}, and finally convergence of geometric structures on $3$--manifolds in \Cref{sec:elc}. In \Cref{sec:counting}, we combine these ingredients with measure-theoretic tools to prove \Cref{thm:counting}.

Finally, \Cref{sec:poison,sec:norm} contain the proof of \Cref{Prop:NongeomExists}. 

\subsection{Acknowledgements}
We thank Matthias Goerner, Eiko Kin, Yair Minsky, Saul 
\linebreak 
Schleimer, and Henry Segerman for a number of enlightening conversations. We thank Matthew Stover and the referee for helpful comments that improved our exposition. We also thank Vaibhav Gadre for suggesting that we prove \Cref{thm:counting} and Ilya Gekhtman for help with the details.



%
%


\section{Background}\label{sec:background}
The primary goal of this section is to survey some background material on Teichm\"uller theory, quadratic differentials, and measured foliations that will be heavily used in the following few sections. The reader is referred to \cite{bonahon1988geometry, FM12, FLP12, strebel1984quadratic} for additional details.
After this general background, we describe Gu\'eritaud's construction of veering triangulations from quadratic differentials \cite{Gue16}.

Throughout, we let $S=\Sigma_{g,n}$ be a surface of genus $g$ with $n$ punctures, and assume that $\xi(S) = 3g-3+n  \geq 1$. This assumption implies that the Teichm\"uller space $\Teich(S)$, which is the space of complex structures on $S$ up to isotopy, has real dimension $2\xi(S) \ge 2$.

\subsection{Quadratic differentials and strata}\label{Sec:TeichBackground}
Let $X \in \Teich(S)$ be a complex structure on $S$. A \define{quadratic differential} $q$ on $X$ is a tensor locally defined in coordinates by $q=q(z)dz^2$ for some meromorphic function $q(z)$. The function $q(z)$ is required to be analytic inside $S$, but is allowed to have simple poles at the punctures of $S$. 
By changing coordinates, we may assume that $q=dz^2$ in the neighborhood of a regular value of $q$, and $q=z^kdz^2$ in the neighborhood of a pole/zero ($k=-1$ for simple poles, and $k>0$ for zeros). These are called natural coordinates and have the property that, away from poles and zeros of $q$, the transition functions have the form $z \to \pm z+c$, for some complex number $c$. In particular, these  transition functions preserve the standard Euclidean metric on $\mathbb{C}$. Throughout the paper, we call the poles and zeros of $q$, as well as the punctures of $S$, \define{singularities} of $q$. This is because punctures of $S$ will play a role similar to other singularities of $q$ even though such punctures may correspond to regular values of $q$.

A quadratic differential $q$ determines a pair of transverse measured foliations $\fol_q^-$ and $\fol_q^+$, called the \define{horizontal} and \define{vertical} foliations. 
In the above natural coordinates $z=x+iy$ away from the singularities, these foliations are given by setting $y$ and $x$ (respectively) to be constant, with transverse measures
$|dy|$ and $|dx|$. Near a zero of order $k$ (where a pole corresponds to $k=-1$), 
each of the horizontal and vertical foliations has 
a $(k+2)$--pronged singularity.

Away from singularities, the transverse measures $|dx|$ and $|dy|$ induce a Euclidean metric $\sqrt{|dx|^2+|dy|^2}$ on $S$. The completion of this metric on $S$ is known as the \define{singular flat metric} corresponding to $q$. The area of $S$ endowed with this metric is denoted $\|q\|$, and defines a norm on the space $\QD(S)$ of quadratic differentials on $S$. We denote by $\QD^1(S)$ the set of elements $q\in \QD(S)$ with $\|q\|=1$. The projection $\QD(S) \to \Teich(S)$ sending a quadratic differential to its underlying complex structure can be identified with the cotangent bundle of $\Teich(S)$; see, for example, \cite{imayoshi2012introduction}. 


The \define{principal stratum} of quadratic differentials $\GQD(S)$ is the subset of $\QD(S)$ that consists of all those quadratic differentials whose zeros are of order 1 (that is, 3--prong singularities), and whose punctures are all simple poles  (that is, 1--prong singularities). The symbol $\mathcal G$ in $\GQD(S)$ stands for ``generic.'' In general, $\QD(S)$ decomposes into strata characterized by the orders of the zeros and poles of $q(z)$. When $S \ncong \Sigma_{1,1}$, the principal stratum is open and dense, while the other strata have positive codimension. (When $S \cong \Sigma_{1,1}$, the principal stratum as previously defined is empty. All  nonzero quadratic differentials belong to a single stratum with a single $2$--prong singularity at the puncture.)





\subsection{Teichm\"uller geodesics and flows}\label{Sec:Flow}
We recall the construction of the \define{Techm\"uller geodesic flow}, denoted $\Phi^t \colon \QD^1(S) \to \QD^1(S)$.
Given a unit-area quadratic differential $q \in \QD^1(S)$, and a number $t \in \RR$, the image $\Phi^t(q)$ is defined as follows. The underlying complex structure is $X_t=X_t(q)$, whose coordinate charts (away from singularities) are given by composing the natural coordinates for $q$ with the affine map 
\begin{equation}\label{Eqn:TeichMap}
\begin{pmatrix} 
e^t & 0 \\
0 & e^{-t}
\end{pmatrix}.
\end{equation}
Then, $\Phi^t(q) \in \QD^1(S)$ is the quadratic differential on $X_t$ given by $dz^2$ in these coordinates. The flow $\Phi^t$ plays an important role in \Cref{sec:counting}.

For a fixed $q \in \QD^1(S)$, the map $\RR \to \Teich(S)$ defined by $t \mapsto X_t(q)$ is called a  \define{Teichm\"uller geodesic}. Indeed, this line in $\Teich(S)$ is a parametrized geodesic for the \define{Teichm\"uller metric} $d_\Teich$.
%
%
By Teichm\"uller's theorem, any pair of points $X,Y$ in $\T(S)$ are joined by a unique segment of a Teichm\"uller geodesic, of length $d_\Teich(X,Y)$, which we often denote by $[X,Y]$. The map $X = X_0 \to X_t = Y$ defined by \Cref{Eqn:TeichMap} is called the \define{Teichm\"uller map}.
If the quadratic differential $q$ associated to a Teichm\"uller geodesic $\gamma$ is in the principal stratum $\GQD(S)$, then we will say that $\gamma$ is in the principal stratum of Teichm\"uller space.

Consider now a pseudo-Anosov mapping class $\varphi\in \Mod(S)$.
Bers \cite{Ber78} showed that $\varphi$ preserves a unique geodesic axis $\gamma_\varphi \subset \Teich(S)$ consisting of  
points $X\in \Teich(S)$ such that $d_\Teich(X,\varphi(X))=\log \lambda_\varphi$, where $\lambda_\varphi > 1$ is the dilatation of $\varphi$. By \Cref{Eqn:TeichMap}, the geodesic $\gamma_\varphi$ corresponds to a one-parameter family $q_t$ of quadratic differentials. The complex structure $X_t$ underlying $q_t$ is a point along $\gamma_\varphi$, and the projective classes of $\F^+(q_t)$ and $\F^-(q_t)$ are constant and equal to the invariant foliations of $\varphi$. If some (hence every) $q_t$ lies in the principal stratum $\GQD(S)$, we say that $\varphi$ is a \define{principal pseudo-Anosov}.

%


\subsection{Curves, foliations, and laminations}\label{sec:background:fols_and_lams}
One can study how conformal structures change along Teichm\"uller geodesics by understanding what happens to the lengths of curves and arcs. This perspective will be important in \Cref{sec:qd_convergence}.

The \define{arc and curve graph} $\AC(S)$ is the graph whose vertices are  isotopy classes of essential simple closed curves and simple proper arcs in $S$. Here, \emph{essential} means that the curve or arc is not isotopic into a small neighborhood of a point or a puncture. Two vertices are joined by an edge in $\AC(S)$ if they have disjoint representatives.
If we follow the same construction with vertices restricted to be closed curves on $S$, we obtain the \define{curve graph} $\C(S)\subset \AC(S)$, and similarly restricting to arcs yields the \define{arc graph} $\A(S)\subset \AC(S)$.

We have already encountered measured foliations as the vertical and horizontal foliations of a quadratic differential.
A \define{singular measured foliation} $\fol$ on $S$ is a singular foliation endowed with a transverse measure (see \cite{FLP12} for a more thorough definition). 
A Whitehead move on a foliation $\fol$  introduces or contracts a compact singular leaf on $\fol$, by either splitting a singularity into a pair of singularities joined by a compact leaf, or by contracting such a leaf to collapse two singularities into one. 
In general, we let $\MF(S)$ denote Thurston's space of measured foliations of $S$, up to Whitehead equivalence, whose topology comes from convergence of transverse measures; again see \cite{FLP12} for details.
The space $\PMF(S)$ of \define{projective measured foliations} is obtained from $\MF(S)$ by identifying measures which differ by scaling. 


By the Uniformization theorem, every conformal structure $X$ is realized by a unique hyperbolic metric.
A \define{geodesic measured lamination} on a hyperbolic surface $X$ is a non-empty collection of disjoint simple geodesics of $X$ whose union is closed in $X$, along with a transverse measure that is invariant as we flow along the geodesics. There is an exact correspondence between measured laminations and measured foliations (up to Whitehead equivalence). For a precise treatment of this correspondence between foliations and laminations, see Levitt \cite{Lev83}. We denote the space of (geodesic) measured laminations on $S$ by $\ML(S)$. In analogy with $\PMF(S)$, we define the space $\PML(S)$ of projective measured laminations to be $\ML(S)$ modulo scaling of the measure. We will use the identifications $\MF(S) \cong \ML(S)$ and $\PMF(S) \cong \PML(S)$ without further comment.


Let $\C^0(S)$ be the vertex set of $\C(S)$. Endowing every curve with the counting measure embeds $\C^0(S)$ as a subset of $\ML(S)$. By the above correspondence, we also have a natural embedding 
$\C^0(S)\subset \MF(S)$. Thurston proved that the projectivization of $\C^0(S)$ is dense in both $\PMF(S)$ and $\PML(S)$. Furthermore,  $\PMF(S)$ and $\PML(S)$ are compact \cite{FLP12}. 

A \define{filling lamination} is one that intersects every (essential) curve. 
(We will also call the corresponding foliations filling.)
The space of \define{ending laminations} of $S$, denoted $\el(S)$, is obtained by restricting to the subset of $\ML(S)$ consisting of filling laminations, and quotienting by forgetting the measures. Hence, $\el(S)$ is a quotient of a subspace of $\ML(S)$.
This space plays an important role in the theory of Kleinian groups; see \Cref{sec:elc}.

Finally, we say that a filling lamination (or the corresponding measured foliation) is  \define{uniquely ergodic} if the underlying topological lamination supports a unique projective measure class.
The subspace of uniquely ergodic foliations is denoted $\mc{UE}(S)\subset \PMF(S)$.

\subsection{Intersection pairing}\label{Sec:Intersection}

Given two vertices $a,b\in \AC^0(S)$, the \emph{geometric intersection number} of $a$ and $b$ is defined to be the minimal number of intersections between any pair of curves/arcs representing $a$ and $b$. In symbols, 
\[
i(a,b)=\min_{\alpha \in a, \, \beta \in b} |\alpha \cap \beta | . 
\]
Thurston showed that this function extends uniquely to a continuous, homogeneous function $i \colon \MF(S)\times \MF(S) \to \RR$, also called the \define{geometric intersection number}. See \cite{Thu78, bonahon1988geometry}. 

For a quadratic differential $q$, recall the vertical and horizontal measured foliations $\fol_q^+$ and $\fol_q^-$. For $a\in \AC(S)$, let $h_q(a)$ denote the (horizontal) length of $a$ with respect to the transverse measure on $\fol_q^+$. Similarly, 
 $v_q(a)$ denotes the (vertical) length of $a$ with respect to the transverse measure on $\fol_q^-$. Then the \define{$\ell^1$ length} of $a$ with respect to the flat structure induced by $q$ is $\ell^1_q(a)=h_q(a)+v_q(a)$. The intersection pairing $i(\cdot,\cdot)$ satisfies
\[
h_q(a) = i(\fol_q^+, a) \quad \text{ and } \quad v_q(a) = i(\fol_q^-, a).
\]
Hence, $\ell^1_q(\cdot) = i(\fol_q^+, \cdot) +  i(\fol_q^-, \cdot)$ extends to a continuous function on $\mathcal{MF}(S)$. 
In \Cref{sec:convergence}, we will need the stronger observation that the pairing 
\[
i^1 \colon \QD(S) \times \mathcal{MF}(S) \to \mathbb{R}.
\]
given by $(q,\fol) \mapsto \ell^1_q(\fol)$ is continuous in both parameters. This follows from the continuity of the intersection pairing along with the fact \cite{HM79} that  the assignment $q \mapsto (\fol_q^+, \fol_q^-)$ induces a homeomorphism $\QD(S) \to \mathrm{Fill}^2 \subset \mathcal{MF}(S) \times  \mathcal{MF}(S)$, where $\mathrm{Fill}^2 = \{(\fol_1,\fol_2) : i(\alpha, \fol_1) +  i(\alpha, \fol_2) > 0 \text{ for all } \alpha\in\C(S) \}$. 


%

\subsection{Veering triangulations}\label{Sec:VeeringBackground}
We close this background section with a description of Gu\'eritaud's construction of veering triangulations \cite{Gue16}. Before giving the details, we note that this was not the original construction. Agol's original definition used periodic train track splitting sequences associated to the invariant foliations of a pseudo-Anosov map \cite{Ago11}. A very quick combinatorial characterization of veering triangulations appears in \cite{HRST11}. See also \cite{FG13, MT17} for other perspectives.

%




Let $S$ be a surface, and let $q \in \QD(S)$ be a quadratic differential. Let $\Scirc$ be the complement of the singularities of $q$. Then $\fol^-$ and $\fol^+$, the horizontal and vertical foliations of $q$, have singularities only at punctures of $\Scirc$. Recall that $q$ defines a singular flat metric on $S$, which restricts to an incomplete metric on $\Scirc$. A \define{saddle connection} of $q$ is a geodesic arc in the singular flat metric on $S$, with singularities at the endpoints but no singularities in its interior. Every saddle connection naturally yields an arc in $\Scirc$.
For the following construction, we will assume that $q$ has \emph{no horizontal or vertical saddle connections}; that is, no saddle connection is a leaf of $\fol^\pm$.

Consider an immersed rectangle $\mathcal{R} \to S$, with horizontal boundary mapped to $\fol^-$, vertical boundary mapped to $\fol^+$, and interior mapped to $\Scirc$. It follows that the interior of $\mathcal{R}$ must miss all singularities of $\fol^\pm$. We call $\mathcal{R}$ a \define{maximal (singularity-free) rectangle} of $q$ if it is maximal with respect to inclusion. Since there are no horizontal or vertical saddle connections, every side of a maximal rectangle must meet exactly one puncture of $\Scirc$. Observe that the punctures of $\Scirc$ must lie at interior points of edges: if a puncture occurred at a corner, $\mathcal{R}$ could be extended, violating maximality. See \Cref{fig:maximal_rect} and \cite[Figure 2]{MT17}.

\begin{figure}
	\centering
	\includegraphics[scale=1]{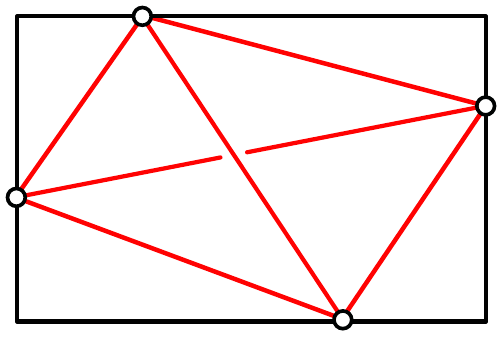}
	\caption{Gu\'eritaud's construction: a maximal singularity-free rectangle $\mc{R}$ defines an oriented ideal tetrahedron in $\Scirc \times \RR$ with a projection to $\mathcal{R}$.}
	\label{fig:maximal_rect}
\end{figure}

Every maximal singularity-free rectangle $\mc{R}$ defines an oriented tetrahedron $\tet$ with a map $\tet \to \mc{R}$, as follows. The vertices of $\tet$ map to the four preimages of punctures in $\bdy \mathcal{R}$. The edges of $\tet$ map to the six saddle connections spanned by these four vertices. The orientation of $\tet$ is determined by the convention that the more-vertical edge (whose endpoints are on the horizontal edges of $\mathcal{R}$) lies above the more-horizontal edge. See \Cref{fig:maximal_rect}.

Performing this construction for all maximal rectangles
gives a countable collection of tetrahedra whose vertices map to punctures of $\Scirc$. If tetrahedra $\tet$ and $\tet'$ contain the same triple of saddle connections (equivalently, if maximal rectangles $\mathcal{R}$ and $\mathcal{R}'$ intersect along a sub-rectangle that meets three punctures), we glue $\tet$ to $\tet'$ along their shared face. By a theorem of Gu\'eritaud \cite{Gue16} (see also \cite[Theorem 2.1]{MT17}), the resulting $3$--complex is an ideal triangulation $\tri_q$ of $\Scirc \times \RR$:

\begin{theorem}[Gu\'eritaud]\label{Thm:Gueritaud}
The complex of tetrahedra associated to maximal rectangles of $q$ is an ideal triangulation $\tri_q$ of $\Scirc \times \RR$. The maps of tetrahedra to their defining rectangles piece together to form a fibration $\pi \from \Scirc \times \RR \to \Scirc$. 
%
\end{theorem}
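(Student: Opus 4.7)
The plan is to enumerate the maximal singularity-free rectangles via one-parameter rescaling families, match combinatorial ``flip events'' in these families to face-gluings between tetrahedra, and then verify that the resulting $3$--complex is a manifold which fibers over $\Scirc$. The standing hypothesis that $q$ has no horizontal or vertical saddle connections is critical: it forces each maximal rectangle $\mc{R}$ to contain exactly one puncture in the interior of each of its four sides, so that the associated tetrahedron $\tet(\mc{R})$ is unambiguously defined and its vertices are distinct.

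First I would parametrize the maximal rectangles via a rectangle decomposition of $\Scirc$. For each $x \in \Scirc$, there is a unique maximal singularity-free rectangle $\mc{R}(x)$ whose horizontal and vertical sides bound an open neighborhood of $x$. The set $\{x : \mc{R}(x) = \mc{R}\}$ is an open sub-rectangle, and these cores give a locally finite partition of $\Scirc$; in particular the set of maximal rectangles is countable. Now consider a one-parameter family of maximal rectangles obtained by continuously stretching a fixed $\mc{R}$ horizontally while adjusting the vertical extent to preserve maximality. Away from a discrete set of parameters the combinatorial type is constant; at the exceptional times a boundary puncture passes through a corner and is replaced by a neighboring puncture, yielding two maximal rectangles that share exactly three of their four boundary punctures. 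The analogous vertical rescaling produces the remaining such pairs.

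Next I would identify each of these flip events with a face-gluing. When two maximal rectangles $\mc{R}$ and $\mc{R}'$ share a triple $\{a,b,c\}$ of boundary punctures in matching positions, the tetrahedra $\tet(\mc{R})$ and $\tet(\mc{R}')$ share the face spanned by those three vertices, and we glue along it. Each tetrahedron has four faces; two are matched by horizontal flips and two by vertical flips, so every face is identified with exactly one face of exactly one other tetrahedron. A local analysis around each edge, classified by its underlying saddle connection $\sigma$, shows that the tetrahedra containing $\sigma$ can be cyclically ordered according to a rescaling family of maximal rectangles that contain $\sigma$ as an interior diagonal. This shows that the link of every edge is a circle, and a parallel analysis at each puncture of $\Scirc$ (producing a torus cross-section) confirms that the complex is a topological $3$--manifold with ideal triangulation $\tri_q$.

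Finally I would assemble the fibration. The maps $\tet(\mc{R}) \to \mc{R} \hookrightarrow \Scirc$ respect the face identifications by construction, so they piece together into a continuous map $\pi$ from the underlying space of $\tri_q$ to $\Scirc$. The preimage of a point $x \in \Scirc$ is traced out by the one-parameter family of all maximal rectangles whose interior contains $x$, parametrized by the rescaling ratio; continuity of this parameter identifies each fiber with $\RR$, and a standard triviality argument promotes this to a product structure $\Scirc \times \RR$. I expect the main obstacle to be the global bookkeeping: one must check that the two rescaling directions, horizontal and vertical, commute coherently so that every face-gluing is realized and the fibers patch into genuine lines rather than circles, and one must rule out pathological self-gluings of a tetrahedron to itself.
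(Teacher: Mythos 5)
The paper itself does not prove this statement; it is quoted as Gu\'eritaud's theorem, with citations to \cite{Gue16} and \cite[Theorem 2.1]{MT17}. So the benchmark is Gu\'eritaud's argument, which your outline loosely parallels (tetrahedra from maximal rectangles, face gluings from shared triples of singularities, the fiber over $x$ coming from the family of maximal rectangles containing $x$). Within that outline, however, there are concrete problems. The opening parametrization is false: a nonsingular point $x$ does not lie in a \emph{unique} maximal singularity-free rectangle; it lies in a whole one-parameter family of them, ordered say by width --- indeed this is exactly the fact you invoke at the end to identify $\pi^{-1}(x)$ with $\RR$, so your first step contradicts your last. Consequently the ``cores'' $\{x : \mc{R}(x) = \mc{R}\}$ do not partition $\Scirc$, and countability of the set of maximal rectangles must come from elsewhere (for instance, each maximal rectangle is determined by its four boundary singularities, equivalently by saddle connections among them, and there are only countably many saddle connections).

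Beyond that, the combinatorial facts carrying the proof are asserted rather than established. You need to show that the absence of vertical and horizontal saddle connections forces exactly one singularity in the interior of each side of a maximal rectangle; that a triple of singularities spanning a face lies in \emph{exactly two} maximal rectangles, which is the real content behind ``every face is identified with exactly one face of exactly one other tetrahedron'' and is where maximality plus the no-saddle-connection hypothesis enter; and that the finitely many maximal rectangles containing the rectangle spanned by a saddle connection $\sigma$ close up into a circular edge link --- the delicate point is at the two extremes of that family, where $\sigma$ is the bottom or top diagonal of its tetrahedron. Your vertex-link check is also off: in $\Scirc \times \RR$ the cusp cross-sections are infinite annuli (puncture link times $\RR$), not tori; tori appear only after quotienting by the monodromy to form $\mathring M_\varphi$. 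Finally, ``a standard triviality argument'' is doing real work: one must verify local triviality of $\pi$ and that each fiber is an entire line rather than a half-open interval, which again uses the hypothesis that $q$ has no vertical or horizontal saddle connections. As written, the proposal is a plausible roadmap for Gu\'eritaud's construction, but not a proof.
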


We call $\tri_q$ the \define{veering triangulation associated to $q$}.
Observe that a saddle connection of $q$ corresponds to an edge of $\tri_q$ if and only if it spans a singularity-free rectangle. This is because every singularity-free rectangle can be expanded to a maximal one. 

Now, suppose that the quadratic differential $q$ corresponds to a pseudo-Anosov homeomorphism $\varphi \from S \to S$. Restricting $\varphi$ to the punctured surface $\Scirc$ produces a pseudo-Anosov $\phicirc \from \Scirc \to \Scirc$. Then $\phicirc$ permutes the (maximal) singularity-free rectangles of $q$, and therefore acts simplicially and $\pi$-equivariantly on the ideal triangulation $\tri_q$ of $\Scirc \times \RR$. Consequently, $\tri_q$ projects to an ideal triangulation of the \define{punctured mapping torus}
\[
\mathring M_\varphi = M_\phicirc = \:
 \quotient{\Scirc \times [0,1]} {(x,1) \!\sim\! (\phicirc(x),0)}.
\]
The resulting \define{veering triangulation of $M_\phicirc$} is denoted $\tri_\varphi$.

%


\section{Convergence of quadratic differentials}
\label{sec:qd_convergence}

In this section, we establish a statement about convergence of quadratic differentials that will form a key component for proving \Cref{thm:convergence}. This statement requires a handful of definitions.


For a pair of geodesics $\gamma_1$ and $\gamma_2$ in a metric space $X$, we say that $\gamma_2$ is a $\rho$--\define{fellow traveler} with $\gamma_1$ for distance $D$ centered at $x=\gamma_1(t_0)$ if we have $d_X(\gamma_1(t),\gamma_2(t))<\rho$ whenever $d_X(\gamma_1(t),x) \leq D/2$, for some unit speed parametrizations of $\gamma_1$ and $\gamma_2$.

For a given constant $\epsilon>0$, let $\Teich_\epsilon(S)$ denote the set of all $X\in\Teich(S)$
such that the hyperbolic metric defined by $X$ contains a closed geodesic shorter than $\epsilon$.
The complement $K_\epsilon := \Teich(S)\setminus \Teich_\epsilon(S)$ is called the $\epsilon$--\define{thick part} of Teichm\"uller space. Let $\gamma_0$ be a Teichm\"uller geodesic in a thick part of Teichm\"uller space, and let $X,Y\in \mathrm{Teich}(S)$ be two points on $\gamma_0$. By Masur's Criterion \cite{masur1992hausdorff}, the horizontal and vertical foliations of $\gamma_0$ are uniquely ergodic.
Let $B_r(X)$ and $B_r(Y)$ be balls of radius $r$ about $X$ and $Y$, respectively. Define $\Gamma_r(X,Y)$ to be the set of all oriented geodesics $\gamma$ passing first through $B_r(X)$, then through $B_r(Y)$, such that the vertical and horizontal foliations $\fol^+$ and $\fol^-$ associated to $\gamma$ are uniquely ergodic. Recall that the space of uniquely ergodic foliations on $S$ is denoted by $\mathcal{UE}(S)$. By a result of Hubbard and Masur \cite{HM79}, any pair of uniquely ergodic foliations in $\PMF(S)\cong \del \Teich(S)$ determine a unique Teichm\"uller geodesic, so we can also think of $\Gamma_r(X,Y)$ as a subset of $\mathcal{UE}(S)\times \mathcal{UE}(S)$.



\begin{theorem}[Gadre--Maher]\label{thm:GM}
Let $g\in \mathrm{Mod}(S)$ be a principal pseudo-Anosov,  with invariant geodesic $\gamma_g$. Let $\mu$ be a probability distribution on $\Mod(S)$ with finite first moment, such that $\semigroup{\mu}$ is non-elementary and contains $g$, and fix $D>0$.
Then there exists $\rho>0$ such that, for almost every 
bi-infinite sample path $\omega=(\omega_n)$, there is a positive integer $N$ such that for $n\ge N$, $\omega_n$ is a principal pseudo-Anosov  whose Teichm\"uller geodesic $\gamma_{\omega_n}$ is a $\rho$--fellow traveler with $h_n \gamma_g$ for distance $D$, for some $h_n \in\Mod(S)$.
\end{theorem}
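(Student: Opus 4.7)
The plan combines three ingredients: convergence of sample paths to uniquely ergodic boundary limits; recurrence of positive-probability increment blocks realizing $g$; and uniform fellow-traveling of Teichm\"uller geodesics in the thick part.

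First, for almost every bi-infinite sample path $\omega = (\omega_n)_{n \in \ZZ}$, I would argue that $\omega_n x_0$ (as $n \to +\infty$) and $\omega_{-n} x_0$ (as $n \to +\infty$) converge, by Kaimanovich--Masur, to uniquely ergodic foliations $\lambda^+, \lambda^- \in \mathcal{UE}(S)$, which by the Gadre--Maher genericity theorem for the principal stratum lie almost surely in the principal stratum. The bi-infinite Teichm\"uller geodesic $\gamma_\omega$ with endpoints $(\lambda^-,\lambda^+)$ then lies entirely in the principal stratum, and by Masur's criterion its projection to moduli space is recurrent to a compact set, so $\gamma_\omega \subset K_\epsilon$ for some $\epsilon > 0$. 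By Maher's theorem $\omega_n$ is pseudo-Anosov for all large $n$, and since its attracting and repelling laminations converge in $\PMF(S)$ to the principal-stratum foliations $\lambda^\pm$, $\omega_n$ is eventually a principal pseudo-Anosov.

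Second, I would produce a translate of $\gamma_g$ that fellow-travels $\gamma_\omega$ over a long window. Writing $g = g_1 \cdots g_k$ with each $g_i \in \mathrm{Supp}(\mu)$, the block $(g_1,\ldots,g_k)^M$ occurs in the increment sequence with positive probability $\prod_i \mu(g_i)^M$ for every $M$, so by ergodicity of the Bernoulli shift on the increment space, almost every $\omega$ contains this block starting at some index $j = j(\omega) \in \ZZ$, yielding $\omega_j^{-1}\omega_{j+kM} = g^M$. Choosing $M$ with $M\log\lambda_g \ge D + C$ for a bookkeeping constant $C$, and setting $h_* = \omega_j$, the orbit segment $\omega_j x_0, \omega_{j+1} x_0, \ldots, \omega_{j+kM} x_0$ lies at bounded Teichm\"uller distance from a subsegment of the axis $h_* \gamma_g$ of $h_* g h_*^{-1}$ of length $M\log\lambda_g$. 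The same segment also tracks $\gamma_\omega$ at bounded distance by Tiozzo's bounded fellow-traveling theorem in the thick part, so a Morse-lemma argument (using Rafi's hyperbolicity of the thick part) produces a constant $\rho_0$ and a window of length at least $D + C/2$ along which $\gamma_\omega$ and $h_* \gamma_g$ are $\rho_0$-fellow travelers.

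Third, I would pass from $\gamma_\omega$ to $\gamma_{\omega_n}$. As $n \to \infty$, the attracting and repelling laminations of $\omega_n$ converge in $\PMF(S)$ to $\lambda^\pm \in \mathcal{UE}(S)$, so by Hubbard--Masur continuity the axes $\gamma_{\omega_n}$ converge locally uniformly to $\gamma_\omega$ in the thick part. Hence there is $N$ so that for all $n \geq N$, $\gamma_{\omega_n}$ is within $\rho_0$ of $\gamma_\omega$ on the window identified above; the triangle inequality then places $\gamma_{\omega_n}$ within $\rho := 3\rho_0$ of $h_* \gamma_g$ on a subsegment of length at least $D$. Setting $h_n = h_*$ for all such $n$ completes the proof.

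The main technical obstacle is the uniformity of $\rho$ across almost every sample path $\omega$ and every large $n$. This reduces to the observation that pseudo-Anosov axes have systole bounded below along the axis, so $\gamma_g$ lies in a fixed thick part $K_{\epsilon_g}$, and by $\Mod(S)$-invariance of $K_{\epsilon_g}$ every translate $h_n \gamma_g$ lies there as well; the Morse and fellow-traveling constants controlling the final estimate therefore depend only on $\epsilon_g$ and $D$, and not on the random data $\omega$ or $n$.
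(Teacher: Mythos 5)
Your outline has the same overall shape as the paper's argument (limit geodesic $\gamma_\omega$, a translate of $\gamma_g$ that it fellow-travels, then transfer to $\gamma_{\omega_n}$), but the two middle steps contain genuine gaps. The central one is in your second step: the occurrence of an increment block spelling $g^M$ only gives $\omega_{j+kM}=\omega_j g^M$, which places the \emph{orbit points} $\omega_j g^i x_0$ near the translate $h_*\gamma_g$; it does not make the \emph{limit geodesic} $\gamma_\omega$ pass near that translate, since the past and future of the walk can steer $\gamma_\omega$ far from $\omega_j x_0$ altogether. There is no ``Tiozzo bounded fellow-traveling theorem'': Tiozzo gives only sublinear tracking, which gives no uniform bound on $d_\Teich(\omega_j x_0,\gamma_\omega)$ at a single (random) index, so your Morse-lemma step produces a constant $\rho_0$ depending on $\omega$, whereas the theorem requires $\rho$ to be chosen before the almost-sure statement. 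This is exactly what Gadre--Maher's Lemma 4.1 (\Cref{lem:GM}) supplies and what the paper's proof is built on: the positive-probability event is that the limiting geodesic $\gamma_\omega$ itself lies in $\Gamma_r(g^{-k}X,g^kX)$, i.e.\ passes through fixed balls centered on $\gamma_g$, and shift ergodicity is applied to \emph{that} event; Bernoulli block recurrence is not a substitute. A second error is the thick-part claim: Masur's criterion does not place $\gamma_\omega$ inside a fixed $K_\epsilon$ --- unique ergodicity of the endpoints does not prevent excursions into the thin part, and the typical random geodesic is not contained in any thick part. The paper only obtains (and only needs) thickness at two selected points of $\gamma_\omega$, via Dahmani--Horbez (\Cref{prop:DH}), together with thickness of $\gamma_g$, and applies Rafi's theorem (\Cref{thm:Rafi}) with thick endpoints rather than along a thick geodesic.

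Two further points. Your step-1 claim that $\omega_n$ is eventually principal ``because its laminations converge to principal foliations'' either begs the question (this is essentially the Gadre--Maher theorem being proved) or must be routed, as in the paper, through long fellow-traveling with $\gamma_g$ plus openness of the principal stratum (\Cref{prop:GM}); and your step-3 claim that $\gamma_{\omega_n}\to\gamma_\omega$ locally uniformly rests on the unproved assertion that the attracting and repelling fixed points of $\omega_n$ converge to $\lambda^\pm$ for the bi-infinite walk. The paper instead uses the quantitative \Cref{prop:DH}, which forces the fellow-traveling window with the translate of $\gamma_g$ to be located inside the $n$-dependent middle range $[e_0 ln,(1-e_0)ln]$ where $\gamma_{\omega_n}$ tracks $\gamma_\omega$; this is why it needs the Birkhoff density argument for block times $m$ in the middle range together with Tiozzo's sublinear tracking to position them along $\gamma_\omega$, and why the resulting $h_n=\omega_{m_0}$ genuinely depends on $n$. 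To repair your proposal you would need to replace the block-recurrence step by \Cref{lem:GM} (or prove an equivalent positive-probability statement about $\gamma_\omega$), and replace the global thick-part and locally-uniform-convergence claims by the endpoint-thickness and window-positioning arguments above.
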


In the above theorem, the statement that $\omega_n$ is principal appears in the statement of Gadre and Maher's  \cite[Theorem 1.1]{GM17}. The statement that $\gamma_{\omega_n}$ fellow travels with a translate of $\gamma_g$ forms a key ingredient in Gadre and Maher's proof that $\omega_n$ is principal. The claim that the fellow-traveling distance $D$ can be taken arbitrarily large follows from examining their argument, but is not explicitly stated. Since our application (\Cref{cor:q-conv}) requires fellow traveling for longer and longer distances, we write down a unified proof of \Cref{thm:GM} by reassembling many of the same tools used by Gadre and Maher. We remark that a similar theorem was obtained independently by Baik--Gekhtman--Hamenst\"adt \cite[Theorem 6.8]{baik2016smallest}.

\begin{proof}[Proof of \Cref{thm:GM}]
Fix a basepoint $X$ on the Teichm\"uller geodesic $\gamma_g$. By a theorem of Kaimanovich and Masur \cite{KM96}, it is almost surely true that $\omega_nX$ and $\omega_{-n}X$ converge to distinct uniquely ergodic measured foliations $\fol^+_\omega$ and $\fol^-_\omega$ as $n\to \infty$. Let $\gamma_\omega$ be the unique Teichm\"uller geodesic determined by these foliations, parametrized by arclength so that $\gamma_\omega(0)$ is the closest point on $\gamma_\omega$ to the basepoint $X$.

In the following argument, we will first  establish fellow traveling between $\gamma_\omega$ and $\gamma_{\omega_n}$ for large $n$. Then we establish fellow traveling between  $\gamma_\omega$ and a carefully chosen translate of $\gamma_g$. This will imply fellow traveling between $\gamma_{\omega_n}$ and the translate of $\gamma_g$, which will also imply that $\omega_n$ is principal. As the proof involves many constants, we point the reader to \Cref{fig:FT} for a sketch of how the ideas fit together.

Let $\epsilon>0$ be small enough so that $\gamma_g$ is in the $\epsilon$--thick part $K_\epsilon$. 
Let $l > 0$ be the drift of the random walk.
Given this $\epsilon$, we have the following proposition, originally proved by Dahmani and Horbez  \cite[Theorem 2.6]{DH15}. The formulation below appears in \cite[Proposition 3.1]{GM17} and holds for any fixed $\epsilon>0$:

\begin{proposition}{\cite[Proposition 3.1]{GM17}}\label{prop:DH}
	There are constants $F>0$ and $0<e_0<\frac{1}{2}$
such for almost every $\omega$, and $n$ sufficiently large, there are points $Y_0$ and $Y_1$ on $\gamma_{\omega_n}$ and points $\gamma_\omega(T_0)$, $\gamma_\omega(T_1)$ on $\gamma_\omega$ such that 
\begin{itemize}
\item[(1)] $d_\Teich(\gamma_\omega(T_i),Y_i) \leq F\,\,$ for $i=0,1$.
\item[(2)] $0 \leq T_0\le e_0ln \le (1-e_0)ln\le T_1\le ln$
\item[(3)] $\gamma_\omega(T_i)$ is in the thick part $K_\epsilon$ for $i=0,1$.
\end{itemize}
\end{proposition}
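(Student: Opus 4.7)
The plan is to assemble three independent ingredients from the theory of random walks on $\Mod(S)$. First, I would invoke the Kaimanovich--Masur boundary convergence result \cite{KM96} already recalled in the surrounding text: for almost every sample path $\omega$, the forward and backward walks converge in $\PMF(S)$ to uniquely ergodic foliations $\fol_\omega^+$ and $\fol_\omega^-$, so the Teichm\"uller geodesic $\gamma_\omega$ through these endpoints is well defined, and by Masur's criterion it is recurrent to the thick part $K_\epsilon$. Combining this with an ergodic-theoretic argument along the random walk --- essentially the content of the Dahmani--Horbez result \cite{DH15} --- yields a uniform $e_0 \in (0,1/2)$ and, for $n$ large, times $T_0 \in [0, e_0 l n]$ and $T_1 \in [(1-e_0) l n, l n]$ with $\gamma_\omega(T_i) \in K_\epsilon$, where $l > 0$ is the drift of the walk (so that $d_\Teich(X, \omega_n X) \asymp l n$).

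Second, I would establish that the sample path sublinearly tracks $\gamma_\omega$: $d_\Teich(\omega_n X, \gamma_\omega) = o(n)$ almost surely. This follows from positive drift combined with boundary convergence by now-standard arguments, and should not be the bottleneck.

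The main obstacle will be the third step: showing that the axis $\gamma_{\omega_n}$ of the (eventually pseudo-Anosov) element $\omega_n$ actually passes within a uniform distance $F$ of $\gamma_\omega(T_0)$ and $\gamma_\omega(T_1)$. My approach is to use stability of Teichm\"uller geodesics with uniquely ergodic endpoints inside the thick part: two such geodesics whose endpoints in $\PMF(S)$ are sufficiently close must fellow-travel in a bounded neighborhood of any thick point lying on one of them. This follows from Rafi's thick--thin decomposition of geodesics, or equivalently from Minsky's product-region theorem. The attracting and repelling fixed foliations of $\omega_n$ should accumulate on $\fol_\omega^\pm$ as $n \to \infty$, because $\omega_n$ acts as an approximate translation along $\gamma_\omega$ over the sublinearly tracked interval, forcing its fixed points in $\PMF(S)$ close to those of the idealized walk. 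Thickness of $\gamma_\omega(T_i)$ then upgrades this boundary estimate to the bound $F$ in Teichm\"uller distance, with $Y_i$ taken to be the closest point on $\gamma_{\omega_n}$ to $\gamma_\omega(T_i)$.

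Putting these three steps together produces the desired constants $F$ and $e_0$ and the points with all the asserted properties. The subtle quantitative point will be ensuring that $F$ is independent of $n$ rather than degrading with the length of the tracked interval; this is where thick-part contraction must be invoked uniformly, and where I would expect the argument to most closely parallel the calculations in \cite[Section 3]{GM17}.
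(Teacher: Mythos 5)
The first thing to note is that the paper does not prove this statement at all: it is imported verbatim as \cite[Proposition 3.1]{GM17}, originally due to Dahmani--Horbez \cite[Theorem 2.6]{DH15}, and is then used as a black box inside the proof of \Cref{thm:GM}. So there is no internal argument to compare yours against; the only question is whether your sketch would stand on its own as an independent proof, and as written it would not.

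The gap sits in your third step, which is where all of the content of the proposition lives. First, the claim that ``the attracting and repelling fixed foliations of $\omega_n$ accumulate on $\fol_\omega^{\pm}$'' is only half available: the attracting fixed point does converge a.s.\ to $\fol^+_\omega$, but the repelling fixed point of $\omega_n$ is determined by the letters $g_1,\dots,g_n$ alone, while $\fol^-_\omega$ is the limit of $\omega_{-n}X$ and depends only on the independent backward letters; the repelling fixed points converge in distribution (to the reflected harmonic measure), not almost surely to $\fol^-_\omega$, so this route to controlling the ``early'' point $Y_0$ collapses. Second, even at the attracting end, closeness of endpoints in $\PMF(S)$ together with unique ergodicity and thickness yields fellow-traveling only on a \emph{fixed compact} piece of $\gamma_\omega$ (this is exactly the kind of statement the paper extracts in \Cref{lem:qd_converge}); here the required point $\gamma_\omega(T_1)$ with $T_1\ge(1-e_0)ln$ escapes to infinity with $n$, so no endpoint-continuity or compactness argument can produce a constant $F$ independent of $n$. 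The uniformity is obtained in \cite{DH15,GM17} by a genuinely different mechanism: quantitative contraction of $\epsilon$--thick Teichm\"uller segments applied to the segment from $X$ to $\omega_nX$, combined with linear progress of the walk and the fact that $d_\Teich(X,\omega_nX)$ exceeds the translation length of $\omega_n$ by only $o(n)$, which forces the axis $\gamma_{\omega_n}$ into a bounded neighborhood of that segment before time $e_0ln$ and keeps it there past time $(1-e_0)ln$. You flag this as ``the subtle quantitative point,'' but resolving it is the theorem, and deferring to ``the calculations in \cite[Section 3]{GM17}'' is not a proof. Finally, your first step is circular: you credit the existence of the thick times $T_0,T_1$ to ``essentially the content of the Dahmani--Horbez result,'' but the Dahmani--Horbez theorem \emph{is} this proposition, item (1) included; the recurrence of $\gamma_\omega$ to $K_\epsilon$ at a definite fraction of times is the easy part (Kaimanovich--Masur \cite{KM96} plus ergodicity of the Teichm\"uller flow), whereas the bounded-distance assertion you postponed to step three is precisely what \cite{DH15} proves.
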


See the red box in \Cref{fig:FT} for an illustration.
 With both $\epsilon$ and $F$ now fixed, we can apply the following theorem of Rafi \cite[Theorem 7.1]{Raf14}:

\begin{theorem}{\cite[Theorem 2.3]{GM17}}\label{thm:Rafi}
For any constants $\epsilon>0$ and $F\ge 0$, there is a constant $B=B(\epsilon,F)$ such that if $[Y,Z]$ and $[Y',Z']$ are two Teichm\"{u}ller geodesics, with $Y$ and $Z$ in the $\epsilon$--thick part, and 
$$
d_\Teich(Y,Y')\le F \quad and\quad  d_\Teich(Z,Z')\le F,
$$
then $[Y,Z]$ and $[Y',Z']$ are parametrized $B$--fellow travellers.
\end{theorem}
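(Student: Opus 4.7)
The statement is a Morse-type stability property for Teichm\"uller geodesics with endpoints in the thick part. Since $\Teich(S)$ is not Gromov hyperbolic (by Masur--Wolf), the usual Morse lemma from coarse geometry does not apply globally, and indeed the conclusion fails if $Y$ or $Z$ is allowed to sit in an arbitrarily short-curve region. The plan is to replace global hyperbolicity with Minsky's \emph{strong contraction} property for thick geodesics and derive fellow-traveling from this via a standard coarse-geometric argument.

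First I would invoke Minsky's theorem that a Teichm\"uller geodesic segment $\gamma = [Y,Z]$ with both endpoints in $K_\epsilon$ is strongly contracting: there exist constants depending only on $\epsilon$ such that the nearest-point projection $\pi_\gamma \from \Teich(S) \to \gamma$ has uniformly bounded diameter on any metric ball disjoint from $\gamma$, and is coarsely Lipschitz overall. From strong contraction one deduces the Morse property by a standard argument: every $(K,K)$--quasi-geodesic with the same endpoints as $\gamma$ lies in a $B_0(\epsilon, K)$--neighborhood of $\gamma$, with the proof partitioning the quasi-geodesic into arcs whose nearest-point projections move by at most a definite amount and bounding each arc's length via the contraction estimate.

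Next I would handle the case of close-but-not-equal endpoints. The concatenation $\sigma := [Y',Y] \cup [Y,Z] \cup [Z,Z']$ is a $(1, 2F)$--quasi-geodesic from $Y'$ to $Z'$, so the Morse property just established forces the true Teichm\"uller geodesic $[Y', Z']$ to lie within Hausdorff distance $B_0(\epsilon, F) + F$ of $[Y,Z]$. To upgrade Hausdorff closeness to \emph{parametrized} fellow-traveling, I would use that the lengths of $[Y,Z]$ and $[Y',Z']$ differ by at most $2F$, and that nearest-point projection onto a strongly contracting geodesic is essentially arclength-preserving on nearby paths; matching unit-speed parametrizations at the starting points then gives $d_\Teich(\gamma(t), \gamma'(t)) \leq B$ for some $B = B(\epsilon, F)$ throughout, with $B$ independent of the length of $[Y,Z]$.

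The main obstacle is the first step, Minsky's strong contraction property for thick geodesics: without it, there is no substitute for hyperbolicity in $\Teich(S)$ and the theorem really does fail once short curves appear. A related alternative approach, perhaps more transparent in flavor, is to use Rafi's combinatorial model of Teichm\"uller geodesics via subsurface projections. Thickness of $Y$ and $Z$ yields uniformly bounded projections to every proper subsurface; the hypothesis $d_\Teich(Y,Y') \le F$ transfers this to uniformly close projections for $Y'$ (and similarly for $Z, Z'$); Rafi's structure theorem then forces the two geodesics to have essentially the same coarse combinatorics and hence fellow travel.
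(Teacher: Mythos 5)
You should first note that the paper does not prove this statement at all: it is quoted as a black box from Gadre--Maher \cite[Theorem 2.3]{GM17}, and is originally Rafi's fellow-traveling theorem \cite[Theorem 7.1]{Raf14}. So the comparison is between your sketch and Rafi's argument, and there your main route has a genuine gap. Minsky's strong contraction theorem applies to Teichm\"uller geodesics that lie entirely in the thick part (cobounded geodesics), not to geodesics that merely have thick endpoints. Under the hypotheses here only $Y$ and $Z$ are assumed to lie in $K_\epsilon$; the segment $[Y,Z]$ can make arbitrarily long excursions into the thin part, for instance when $Z$ differs from $Y$ by a very high power of a Dehn twist. Along such an excursion the Teichm\"uller metric is, by Minsky's product regions theorem, coarsely a sup metric on a product, so nearest-point projection onto $[Y,Z]$ is \emph{not} uniformly contracting: balls based near the product region project to sets of diameter comparable to their radius. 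Hence the property you invoke in your first step is false in the generality needed, and the Morse-lemma argument and the quasi-geodesic concatenation step built on it collapse. The distinction is not cosmetic for this paper: in the proof of \Cref{thm:GM} the theorem is applied to geodesics such as $\gamma_\omega$ and $\gamma_{\omega_n}$, which are not known to be cobounded --- only certain points on them are known to be thick --- so one really needs the endpoints-only-thick statement, which cannot be reached through contraction of $[Y,Z]$. (Your final upgrade from Hausdorff closeness to parametrized fellow traveling is also only asserted, but that is secondary to this.)

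Your closing alternative via subsurface projections is the right direction --- it is essentially Rafi's actual route --- but as written it is a gesture rather than a proof, and one assertion is off. Thickness of $Y$ and $Z$ does not give uniformly bounded projection distance between $Y$ and $Z$ in proper subsurfaces: $d_W(Y,Z)$ can be arbitrarily large for thick endpoints, and that is precisely when $[Y,Z]$ dives into the thin part. What is true, and what the argument needs, is that $d_\Teich(Y,Y')\le F$ forces $d_W(Y,Y')$ to be uniformly bounded for every subsurface $W$ (similarly for $Z,Z'$), so the two geodesics have matching coarse end data. Passing from that matching data to \emph{parametrized} fellow traveling in the Teichm\"uller metric --- including through the thin regions, where the two geodesics must traverse the same product regions at comparable rates --- is the real content of Rafi's theorem and rests on his characterization of short curves along a Teichm\"uller geodesic and his distance estimates; it is not a routine consequence of ``same coarse combinatorics.'' So the statement is true and your second sketch points at the correct machinery, but neither of your arguments as given closes the proof.
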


Since \Cref{prop:DH} says that $d_\Teich(\gamma_\omega(T_i),Y_i) \leq F$ and $\gamma_\omega(T_i)$ is in the $\epsilon$--thick part for $i\in\{1,2\}$, this theorem guarantees  $B(\epsilon,F)$--fellow traveling between the segments $[Y_0,Y_1]\subset \gamma_{\omega_n}$ and $[\gamma_\omega(T_0),\gamma_\omega(T_1)]\subset \gamma_\omega$, for $n$ sufficiently large (see \Cref{fig:FT}). 
The following lemma due to Gadre and Maher is key:

\begin{lemma}{\cite[Lemma 4.1]{GM17}}\label{lem:GM}
Let $g$ be a pseudo-Anosov in the support of $\mu$ with invariant Teichm\"{u}ller geodesic $\gamma_g$. Then there is a constant $r>0$ such that for every $Y,Z\in\gamma_g$, the probability $\mathbb{P}(\gamma_\omega \in \Gamma_r(Y,Z))$ is strictly positive.
\end{lemma}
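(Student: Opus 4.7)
The plan is to exploit the hypothesis $g \in \semigroup{\mu}$ to force a prescribed initial segment of the sample path to shadow $\gamma_g$, and then invoke the Kaimanovich--Masur theorem to guarantee that the tails still produce uniquely ergodic foliations at infinity.

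Since $g$ lies in the semigroup generated by $\mathrm{supp}(\mu)$, there exist $k_0 \geq 1$ and a word $s_1 \cdots s_{k_0}$ in $\mathrm{supp}(\mu)$ whose product is $g$. Setting $p_0 := \prod_{i=1}^{k_0} \mu(s_i) > 0$ and using independence of the increments, for every $N \geq 1$ the event
\[
E_N = \bigl\{\, \omega_{k_0 j}\, \omega_{k_0(j-1)}^{-1} = g \ \text{for all } 1 \le j \le N \,\bigr\}
\]
has probability at least $p_0^N > 0$. On $E_N$ we have $\omega_{k_0 j} = g^j$, so the walk visits the equally spaced points $g^j X$ along $\gamma_g$, where $X \in \gamma_g$ is a chosen basepoint and the spacing is the translation length $L = \log \lambda_g$.

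Fix $Y, Z \in \gamma_g$ with $Z$ past $Y$ in the direction of translation by $g$, and choose $N$ large enough with indices $0 \le j_Y < j_Z \le N$ so that $d_\Teich(g^{j_Y} X, Y) \le L$ and $d_\Teich(g^{j_Z} X, Z) \le L$. Conditional on $E_N$, the forward tail $(\omega_{k_0 N + m})_{m \ge 0}$ and the backward tail $(\omega_{-m})_{m \ge 0}$ are independent $\mu$-random walks (the forward one shifted by $g^N$), so the Kaimanovich--Masur theorem applies: conditional on $E_N$, the limits $\fol^+_\omega, \fol^-_\omega$ exist and are almost surely uniquely ergodic. By Hubbard--Masur they determine a unique Teichm\"uller geodesic $\gamma_\omega$.

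It remains to show that $\gamma_\omega$ passes within a uniform distance of $Y$ and then of $Z$. Because $\gamma_g$ lies in a definite thick part $K_\epsilon$, the orbit points $\omega_{k_0 j_Y} X = g^{j_Y} X$ and $\omega_{k_0 j_Z} X = g^{j_Z} X$, together with $Y$ and $Z$ themselves, all lie in a common thick region $K_{\epsilon'}$ with $\epsilon' > 0$ uniform. A shadowing estimate in the thick part --- either the argument underlying \Cref{thm:Rafi} applied to long geodesic segments approximating $\gamma_\omega$, or the standard fact that a Teichm\"uller geodesic with uniquely ergodic endpoints fellow-travels any sample path accumulating on those endpoints while passing through a thick region --- produces a constant $r_0 = r_0(\epsilon')$ such that $\gamma_\omega$ passes within $r_0$ of each $\omega_n X \in K_{\epsilon'}$. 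Setting $r := L + r_0$, which is independent of $Y$ and $Z$, gives $\gamma_\omega \in \Gamma_r(Y,Z)$ on a positive-measure subset of $E_N$, proving the lemma. The main obstacle is precisely this uniform-shadowing step: one must verify that $r_0$ depends only on $\epsilon'$ and not on the location of $Y, Z$ along $\gamma_g$. This uniformity follows from $\Mod(S)$-equivariance of the thick part together with the fact that $g$ translates $\gamma_g$ isometrically, so the local Teichm\"uller geometry looks the same near every $g^j X$.
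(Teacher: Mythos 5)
Your own flagged ``main obstacle'' is exactly where the argument breaks, and the proposed resolution does not work. The claimed shadowing estimate --- a constant $r_0=r_0(\epsilon')$ such that $\gamma_\omega$ passes within $r_0$ of every orbit point $\omega_nX$ lying in the thick part --- is false, and nothing about $E_N$ rescues it. The geodesic $\gamma_\omega$ is determined by the two limit foliations, i.e.\ by the tails of the bi-infinite walk, and conditioning on $E_N$ leaves the backward tail completely unconstrained (and the forward tail unconstrained after time $k_0N$). Conditional on $E_N$, the backward limit $\fol^-_\omega$ is still distributed according to the reflected harmonic measure, so with positive conditional probability $\gamma_\omega$ approaches $\gamma_g$ only well past the forced segment and misses $B_r(Y)$ entirely; hence the positive-measure subset of $E_N$ on which $\gamma_\omega\in\Gamma_r(Y,Z)$ is never actually exhibited. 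Orbit points are tracked by $\gamma_\omega$ only sublinearly --- that is precisely Tiozzo's theorem invoked in the proof of \Cref{thm:GM} --- and a tracking bound depending only on an injectivity-radius lower bound would already fail for random walks on $\HH^2$, where there is no thin part at all. There is also a second, independent defect: since $\omega_0=\mathrm{id}$, the forced prefix only visits $g^jX$ for $j\ge 0$, so the construction cannot address $Y,Z$ on the repelling side of the basepoint, whereas the lemma is applied in the paper exactly to $\Gamma_r(g^{-k}X,g^kX)$ with $k$ large.

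Note that the paper does not prove this lemma but quotes it from Gadre--Maher; their argument controls the two \emph{endpoints} of $\gamma_\omega$ directly rather than an initial orbit segment, and that is the missing idea. Since $g\in\semigroup{\mu}$ (and $g^{-1}$ lies in the reflected semigroup), the fixed points $\fol^{\pm}_g$ lie in the supports of the forward and backward harmonic measures $\nu$ and $\check\nu$. Choose neighborhoods $U^{\pm}\subset\PMF(S)$ of $\fol^{\pm}_g$, nested under $g^{\pm 1}$, small enough that every geodesic with uniquely ergodic endpoints in $U^-\times U^+$ passes within some $r_1$ of the basepoint $X\in\gamma_g$. Translating by powers of $g$ then shows that any such geodesic with endpoints in $g^{-n}U^-\times g^{m}U^+$ passes within $r_1$ of every $g^jX$ with $-n\le j\le m$, hence within $r=r_1+L$ (with $L$ the translation length of $g$) of every point of the axis between $g^{-n}X$ and $g^{m}X$, in the correct order; this equivariance step, not a general thick-part shadowing principle, is what makes $r$ uniform in $Y,Z$. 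Finally, the events $\{\fol^-_\omega\in g^{-n}U^-\}$ and $\{\fol^+_\omega\in g^{m}U^+\}$ are independent and each has positive probability because the fixed points lie in the supports of $\check\nu$ and $\nu$, and both limits are almost surely uniquely ergodic by Kaimanovich--Masur. Your prefix-forcing does push the conditional law of $\fol^+_\omega$ toward $\fol^+_g$, but it does nothing to $\fol^-_\omega$, and it is the joint control of both endpoints that replaces the shadowing estimate your proposal relies on.
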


Now, if we apply \Cref{thm:Rafi} with $F$ replaced by the constant $r$ given by \Cref{lem:GM}, then we get a constant $B'=B'(\epsilon,r)$, which guarantees that any geodesic in $\Gamma_r(Y,Z)$ contains a sub-segment that $B'$--fellow travels with $\gamma_g$ on the entirety of $[Y,Z]$.


The following result is observed in the course of proving \cite[Proposition 4.3]{GM17}:
 
 \begin{proposition}\label{prop:GM}
 	Let $g$ be a pseudo-Anosov such that $\gamma_g$ is in the principal stratum. For any $\rho>0$, there is a constant $D_1=D_1(\rho,g)>0$ such that any geodesic that $\rho$--fellow travels with $\gamma_g$ for distance greater than $D_1$ also lies in the principal stratum.
 \end{proposition}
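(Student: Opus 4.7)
The plan is to argue by contradiction and compactness, exploiting the openness of the principal stratum $\GQD(S)\subset\QD(S)$ together with the fact that the axis $\gamma_g$ projects to a compact closed orbit of the Teichm\"uller flow in $\QD^1(S)/\Mod(S)$.  Suppose, for contradiction, that for some $\rho>0$ the desired $D_1$ does not exist.  Then for every $n$ there is a Teichm\"uller geodesic $\gamma_n$, centered at some $X_n\in\gamma_g$, that $\rho$--fellow travels $\gamma_g$ for distance $\ge n$, yet does not lie in the principal stratum.  Since $g$ preserves $\gamma_g$ and each stratum of $\QD(S)$ is $\Mod(S)$--invariant, pre-composing $\gamma_n$ with a suitable power of $g$ lets us assume $X_n$ lies in a fixed compact fundamental domain for the $\langle g\rangle$--action on $\gamma_g$, and we pass to a subsequence with $X_n\to X_\infty\in\gamma_g$.

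Let $q_n\in\QD^1(S)$ be the unit-area quadratic differential defining $\gamma_n$ at the parameter value $t_n$ where $X_n=\gamma_g(t_n)$; by the fellow-traveling hypothesis, the basepoint of $q_n$ lies within Teichm\"uller distance $\rho$ of $X_n$, hence in a compact subset of $\Teich(S)$.  Properness of the bundle projection $\QD^1(S)\to\Teich(S)$ allows us to pass to a further subsequence with $q_n\to q_\infty\in\QD^1(S)$; let $\gamma_\infty$ be the Teichm\"uller geodesic of $q_\infty$.  Continuity of the Teichm\"uller flow $\Phi^t$ (\Cref{Sec:Flow}) gives $\Phi^t(q_n)\to \Phi^t(q_\infty)$ for every fixed $t$, and whenever $|t|\le n/2$ the basepoint of $\Phi^t(q_n)$ lies within Teichm\"uller distance $\rho$ of $\gamma_g(t_n+t)$.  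Combined with the control on $X_n$, this forces $\gamma_\infty$ to be a bi-infinite Teichm\"uller geodesic that remains within Teichm\"uller distance $\rho$ of $\gamma_g$ on all of $\RR$.

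Because $g$ is principal, $\gamma_g$ lies in a thick part of $\Teich(S)$, and by Masur's criterion \cite{masur1992hausdorff} its endpoints $\F^\pm_g\in\PMF(S)$ are uniquely ergodic and filling.  A standard consequence of unique ergodicity is that any bi-infinite Teichm\"uller geodesic staying at bounded distance from $\gamma_g$ converges to $\F^\pm_g$ in $\Teich(S)\cup\PMF(S)$ in forward and backward time, and hence shares its endpoints with $\gamma_g$.  The Hubbard--Masur correspondence $\QD(S)\cong\mathrm{Fill}^2$ from \Cref{Sec:Intersection}, together with the area normalization, then identifies $q_\infty$ with the unit-area quadratic differential of $\gamma_g$ at a point of $\gamma_g$; in particular $q_\infty\in\GQD(S)$.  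Since $\GQD(S)$ is open in $\QD(S)$, the convergence $q_n\to q_\infty$ forces $q_n\in\GQD(S)$ for all sufficiently large $n$, contradicting the choice of $\gamma_n$ outside the principal stratum.

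The main obstacle is the endpoint-coincidence step in the third paragraph: one must rule out bi-infinite Teichm\"uller geodesics at bounded distance from $\gamma_g$ whose $\PMF$ endpoints differ from those of $\gamma_g$.  This is exactly where we use that $g$ is principal (so that $\gamma_g$ is thick and its endpoints are uniquely ergodic), and it is the reason the conclusion of the proposition genuinely requires the principal-stratum hypothesis on $g$.
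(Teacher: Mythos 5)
Your argument is correct, but it is worth noting that the paper does not actually prove this proposition: it is quoted as a byproduct of the proof of \cite[Proposition 4.3]{GM17}, so the "official" route is a citation to Gadre--Maher. What you have written is a self-contained soft proof, and it is essentially the same limiting argument the paper itself deploys a little later for \Cref{lem:qd_converge} and \Cref{cor:q-conv}: long fellow-traveling forces the defining unit-area quadratic differentials to converge (after normalization) to a differential along $\gamma_g$, the bounded-distance geodesic must share the uniquely ergodic endpoints of $\gamma_g$ (the paper cites \cite[Theorem 3.8]{Mas10} for the accumulation step you call "standard"), and openness of $\GQD(S)$ finishes. The extra content in your version, beyond \Cref{lem:qd_converge}, is exactly what is needed to get a \emph{uniform} constant $D_1$: the contradiction structure plus the normalization by powers of $g$, using cocompactness of the $\langle g\rangle$--action on $\gamma_g$ and properness of $\QD^1(S)\to\Teich(S)$ to extract a convergent subsequence no matter where along the axis the fellow-traveling is centered; you handle this correctly. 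Two small points: $\gamma_g$ lies in a thick part because it is the axis of a pseudo-Anosov (cocompact under $\langle g\rangle$), not because $g$ is principal --- principality is used only to place the limiting differential in $\GQD(S)$ and to know the endpoints lie in the principal stratum; and the passage from "the forward ray of $\gamma_\infty$ converges to $[\F^+_g]$" to "the vertical foliation of $q_\infty$ is projectively $\F^+_g$" deserves the same citation the paper gives (topological equivalence from \cite{Mas10} plus unique ergodicity), rather than being left implicit. With those attributions added, your proof stands on its own and could replace the external citation.
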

 
For our application, set $\rho = B + B'$. Without loss of generality, assume that the constant $D$ in the statement of the theorem is larger than $D_1$.
 Let $k \in \NN$ be the smallest positive integer such that 
$$
d_{\Teich}(g^{-k}X,g^kX)\ge D_0:=D+\rho.
$$
By our choice of $B' = B'(\epsilon,r)$, any geodesic in $\Gamma_r(g^{-k}X,g^kX)$ will $\rho$--fellow travel with $\gamma_g$ on an interval of length $D_0$ centered at $X$.
Let $\Omega\subset \mathrm{Mod}(S)^{\ZZ}$ consist of those sample paths $\omega$ such that the sequences $\omega_{-n} X$ and $\omega_n X$ converge to distinct uniquely ergodic foliations $(\fol^-,\fol^+)\in \Gamma_r(g^{-k}X,g^kX)$. By \Cref{lem:GM}, the subset $\Omega$ has positive probability $P$.

\begin{figure}
\centering
\includegraphics[width=\textwidth]{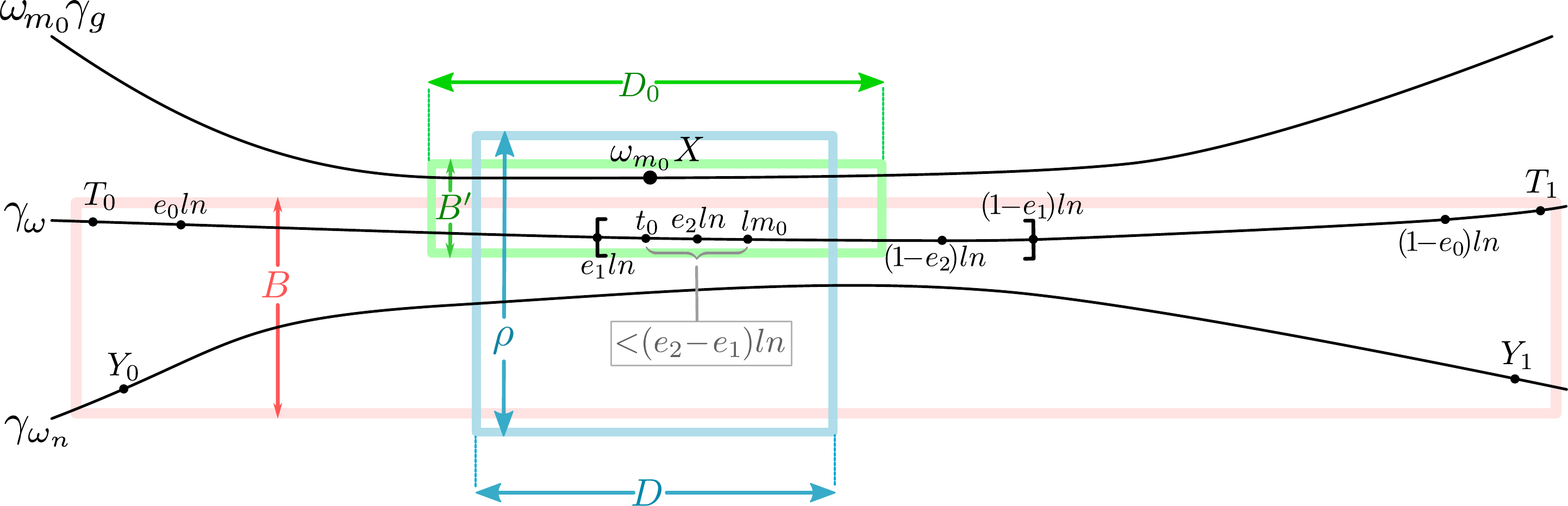}
\caption{ 
The logical structure of the proof of \Cref{thm:GM}. First, establish $B$--fellow traveling of $\gamma_\omega$ and  $\gamma_{\omega_n}$  (red). Then, establish $B'$--fellow traveling between $\gamma_\omega$ and a translate of $\gamma_g$ (green), on an interval contained in the red interval. This implies $\rho$--fellow traveling between $\gamma_{\omega_n}$ and the translate of $\gamma_g$ (blue).
Note that the points marked on the geodesic $\gamma_\omega$ are really $t$--values in the unit-speed parametrization: e.g. $e_0ln$ should be $\gamma_\omega(e_0ln)$.
}
\label{fig:FT}
\end{figure}

Let $\sigma \from \mathrm{Mod}(S)^{\ZZ} \to \mathrm{Mod}(S)^{\ZZ}$ be the shift map. Ergodicity of $\sigma$ implies that for almost every $\omega$, there is some $m\ge 0$ such that $\sigma^m(\omega) \in \Omega$. Since $\omega_m\gamma_{\sigma^m(\omega)}=\gamma_\omega$, it follows that for such $m$, $\gamma_\omega$ is a $B'$--fellow traveler with  $\omega_m\gamma_g$ for distance $D_0$, centered at $\omega_mX$.


For almost every $\omega$, the proportion of $m\in\{1,\dots,n\}$ that satisfy $\sigma^m(\omega)\in \Omega$ tends to $P$ as $n\to\infty$. For any $e \in (e_0, \frac{1}{2})$, 
 the proportion of $m$ in the range $en\le m \le (1-e)n$ that satisfy $\sigma^m(\omega)\in \Omega$ also tends to $P$ as $n \to \infty$. This implies that given $\omega$ and $e \in (e_0, \frac{1}{2})$, there exists $N$ such that for all $n\ge N$, there exists an $m$ such that $en\le m \le (1-e)n$ and $\sigma^m(\omega)\in \Omega$.

By sublinear tracking in Teichm\"uller space, due to Tiozzo \cite{Tio15},  almost every $\omega$ satisfies
$$
\lim_{m\to\infty} \frac{1}{m} d_\Teich(\omega_m X,\gamma_\omega(lm))=0.
$$
(Recall that we have given $\gamma_\omega$ a unit-speed parametrization so that $\gamma_\omega(0)$ is the closest point to $X$).
Now, fix $e_1$ and $e_2$ so that $e_0<e_1<e_2<\frac{1}{2}$.                                     
After possibly replacing $N$ with a larger number, we can assume that for any $n\ge N$ there exists $m_0$ such that
\begin{itemize}
    \item[(1)] $e_2ln\le lm_0 \le (1-e_2)ln\le ln$ and $\sigma^{m_0}(\omega)\in \Omega$.
    \item[(2)] $d_{\Teich}(\omega_{m_0}X,\gamma_\omega(lm_0))\le \frac{1}{2}(e_2-e_1)lm_0$.
\end{itemize}
Furthermore, we may also assume that $N$ is large enough (hence, $n$ is large enough) that  $D_0\le (e_1-e_0)ln$ and $B'\le \frac{1}{2}(e_2-e_1)ln$. Define $h_n\coloneq \omega_{m_0}$. For the sake of clarity, we will continue to denote this mapping class by $\omega_{m_0}$ as we finish the proof.

Since $\sigma^{m_0}(\omega)\in \Omega$, 
\Cref{thm:Rafi} implies that 
$\gamma_\omega$ and $\omega_{m_0}\gamma_g$ are $B'$--fellow travelers for distance $D_0$ centered at a point $p=\gamma_\omega(t_0)$ on $\gamma_\omega$, and centered at $\omega_{m_0}X$ on $\omega_{m_0}\gamma_g$. 
Since $d_{\Teich}(p,\omega_{m_0}X)\le B'\le \frac{1}{2}(e_2-e_1)ln$, and (2) implies
$$
d_{\Teich}(\omega_{m_0}X,\gamma_\omega(lm_0))\le \frac{1}{2}(e_2-e_1)lm_0\le \frac{1}{2} (e_2-e_1)ln,
$$
we have by the triangle inequality that $d_{\Teich}(p,\gamma_\omega(lm_0))\le (e_2-e_1)ln$. Since $e_2ln \le lm_0 \le (1-e_2)ln$, it follows that $e_1ln \le t_0 \le (1-e_1)ln$.
Our requirement that $D_0\le (e_1-e_0)ln$ therefore ensures that $d_{\Teich}(p,\gamma_\omega(e_0ln))\ge D_0$ and $d_{\Teich}(p,\gamma_\omega((1-e_0)ln))\ge D_0$. Since $p$ is the center of the $B'$--fellow traveling between $\gamma_\omega$ and $\omega_{m_0}\gamma_g$, this assures that this distance $D_0$ fellow traveling happens fully inside the range of $B$--fellow traveling between $\gamma_{\omega_n}$ and $\gamma_\omega$. See the green box in  \Cref{fig:FT}. 

Now, recall that we used \Cref{prop:DH} to find $T_0,T_1$ satisfying
$$
T_0\le e_0ln \le (1-e_0)ln\le T_1
$$
so that the interval $[\gamma_\omega(T_0),\gamma_\omega(T_1)]$ is a $B$--fellow traveler with $\gamma_{\omega_n}$. It follows that $\gamma_{\omega_n}$ is a $\rho$--fellow traveler with $ \omega_{m_0}\gamma_g = h_n \gamma_g$ for a distance $D_0-(B+B')\ge D_0-\rho=D$. Since $D\ge D_1$, \Cref{prop:GM} implies that $\gamma_{\omega_n}$ lies in the principal stratum.  
\end{proof}


We will use \Cref{thm:GM} in the form of \Cref{cor:q-conv} below. First, we need the
following lemma, which says that if a geodesic $\gamma$ fellow travels the axis of a pseudo-Anosov for sufficiently long, then $\gamma$ gets arbitrarily close to the axis. If this pseudo-Anosov axis
is principal, then the openness of the principal stratum implies $\gamma$ will be principal as well.

\begin{lemma} \label{lem:qd_converge}
Let $g$ be a pseudo-Anosov mapping class with axis $\gamma_g$.
Fix $\rho > 0$, and suppose that $\gamma_n$ is a sequence of Teichm\"uller geodesics such that $h_n\gamma_n$ is a $\rho$--fellow traveler with $\gamma_g$ for distance $D_n$, where $h_n \in \Mod(S)$ and $D_n \to \infty$. Then there is a choice of quadratic differentials $q_n$ associated to points along $\gamma_n$ such that $h_n q_n$ converge to a quadratic differential $q$ associated to $\gamma_g$.
\end{lemma}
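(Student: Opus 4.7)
The plan is to choose $q_n$ so that $h_n q_n$ has basepoint in a compact region near $\gamma_g$, extract a subsequential limit by compactness of unit quadratic differentials over a bounded set, and then show the limit's Teichm\"uller geodesic must equal $\gamma_g$. Let $c_n = \gamma_g(t_{0,n})$ denote the center of the fellow-traveling interval on $\gamma_g$. Since $g$ acts on $\gamma_g$ by translation of length $\log\lambda_g$ and preserves $d_\Teich$, replacing $h_n$ by $g^{k_n} h_n$ for a suitable integer $k_n$ preserves the hypothesis and allows us to assume $c_n$ lies in a compact fundamental domain for $\langle g\rangle$ acting on $\gamma_g$. After passing to a subsequence, $c_n \to c_\infty \in \gamma_g$. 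Now take $q_n$ to be the unit-area quadratic differential on $\gamma_n$ at the basepoint $\gamma_n(t_{0,n})$; then $h_n q_n$ is the unit-area quadratic differential at $h_n\gamma_n(t_{0,n})$, which by fellow-traveling lies within $\rho$ of $c_n$, hence in a compact subset of $\Teich(S)$.

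The restriction of the sphere bundle $\QD^1(S) \to \Teich(S)$ to a compact subset of the base has compact total space, so after a further subsequence $h_n q_n \to q_\infty \in \QD^1(S)$. Let $\gamma_\infty(T) = \pi(\Phi^T q_\infty)$ denote the Teichm\"uller geodesic generated by $q_\infty$. For each fixed $T \in \RR$ and all $n$ large enough that $|T| \le D_n/2$, the fellow-traveling inequality gives
\[
d_\Teich\bigl(h_n \gamma_n(t_{0,n}+T),\; \gamma_g(t_{0,n}+T)\bigr) < \rho.
\]
Using continuity of $\Phi^T$ and $\pi$ together with $t_{0,n}\to t_\infty$, passing to the limit in $n$ yields $d_\Teich(\gamma_\infty(T),\, \gamma_g(t_\infty + T)) \le \rho$ for every $T \in \RR$. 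Thus $\gamma_\infty$ stays within Teichm\"uller distance $\rho$ of $\gamma_g$ for all time (after a shift of parametrization).

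The final step promotes this ``fellow traveler forever'' statement to the equality $\gamma_\infty = \gamma_g$. Since $g$ is pseudo-Anosov, its invariant foliations $\F^\pm_g$ are uniquely ergodic by Masur's criterion, and Masur's theorem on Thurston's compactification gives $\gamma_g(t) \to [\F^\pm_g]$ in $\PMF(S)$ as $t \to \pm\infty$. The key technical input is that a uniquely ergodic $\PMF$-limit is preserved under bounded Teichm\"uller perturbation: this follows from the extremal-length inequality $|\log\mathrm{Ext}_X(\alpha) - \log\mathrm{Ext}_Y(\alpha)| \le 2\, d_\Teich(X,Y)$ combined with the characterization of convergence to a uniquely ergodic foliation via ratios of extremal lengths. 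Applied with $X = \gamma_\infty(T)$ and $Y = \gamma_g(t_\infty + T)$, it yields $\gamma_\infty(T) \to [\F^\pm_g]$ as $T \to \pm\infty$. By the Hubbard--Masur uniqueness of Teichm\"uller geodesics with prescribed pair of uniquely ergodic endpoints in $\PMF(S)$, we conclude $\gamma_\infty = \gamma_g$, and hence $q_\infty$ is a quadratic differential associated to $\gamma_g$, as required. The hardest step is establishing this stability of uniquely ergodic $\PMF$-limits under bounded Teichm\"uller distance.
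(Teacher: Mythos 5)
Your skeleton is the same as the paper's: normalize by powers of $g$ so the fellow-traveling window sits over a fixed compact piece of $\gamma_g$, extract a limiting unit-area quadratic differential by compactness of the sphere bundle over a compact set, observe that the limiting geodesic $\rho$--fellow travels $\gamma_g$ for all time, and then use unique ergodicity of $\F^\pm_g$ to force the limiting geodesic to equal $\gamma_g$. The genuine gap is that, as written, you prove only \emph{subsequential} convergence, and your choice of $q_n$ does not pin down the limit. You pass to a subsequence twice (to make $c_n$ converge and to make $h_nq_n$ converge), and since your basepoints $h_n\gamma_n(t_{0,n})$ are only known to lie within $\rho$ of $c_n$, different subsequences can converge to quadratic differentials based at different points of $\gamma_g$ (anywhere in a segment of length about $\log\lambda_g+2\rho$); the full sequence $h_nq_n$ need not converge at all for your choice. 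The lemma asserts convergence of the sequence, which is what \Cref{cor:q-conv} and \Cref{Prop:TetraShapesConverge} consume. The repair is exactly the step your write-up skips and the paper carries out as its ``Claim'': after recentering, fix a single point $s\in\gamma_g$ as the center for all $n$ (shrinking $D_n$ by the bounded amount $2\log\lambda_g$), use your limiting-geodesic argument to show $d_\Teich(s,\gamma_n)\to 0$, choose $q_n$ based at points $s_n\in\gamma_n$ with $s_n\to s$, and note that every subsequential limit is then forced to be the unique unit-area differential at $s$ generating $\gamma_g$ with the correct orientation; uniqueness of subsequential limits upgrades this to convergence of the whole sequence.

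A secondary point: your ``key technical input'' (a uniquely ergodic $\PMF$--limit is stable under bounded Teichm\"uller perturbation) is a true and citable fact, but the sketch via Kerckhoff's extremal-length inequality is incomplete as stated: Thurston's compactification is defined by ratios of \emph{hyperbolic} lengths, and converting extremal-length control into convergence at a boundary point is precisely the nontrivial content; moreover, knowing $\gamma_\infty(T)\to[\F^\pm_g]$ in $\PMF$ does not by itself identify the vertical and horizontal foliations of $q_\infty$, which is what Hubbard--Masur uniqueness needs. Since your $\gamma_\infty$ is itself a Teichm\"uller geodesic at bounded distance from $\gamma_g$, the cleaner route is the one the paper takes: invoke Masur's theorem \cite[Theorem 3.8]{Mas10} that such a ray accumulates in $\PMF(S)$ only on foliations topologically equivalent to $\F^\pm_g$, so unique ergodicity identifies the defining foliations of $\gamma_\infty$ with those of $\gamma_g$, and then the two geodesics coincide.
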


\begin{proof}
By replacing $\gamma_n$ with $h_n\gamma_n$ and translating by a power of $g$, we may suppose that $\gamma_n$ is a $\rho$--fellow traveler with $\gamma_g$ for a length $D_n$ subsegment of $\gamma_g$ centered at some point $s \in \gamma_g$. Let $q$ be the quadratic differential based at $s$ associated to $\gamma_g$. We first make the following claim:

\begin{claim}
There are $s_n \in \gamma_n$ such that $s_n \to s$.
\end{claim}

\begin{proof}[Proof of claim] 
If this were not the case, then after passing to a subsequence, $\gamma_n$ converges to a Teichm\"uller geodesic $\gamma$ with the properties that
\begin{enumerate}
\item $\gamma$ fellow travels $\gamma_g$, and
\item $s$ has distance at least $\delta > 0$ from $\gamma$.
\end{enumerate}
Now let $\gamma^+$ be a positive ray in $\gamma$. Since $\gamma^+$ stays bounded distance from (the positive end of) $\gamma_g$, it follows that $\gamma^+$ accumulates in $\PMF(S)$ to foliations that are topologically equivalent to the stable foliation of $g$  \cite[Theorem 3.8]{Mas10}. Since this foliation is uniquely ergodic, we see that $\gamma^+$ in fact converges to the stable foliation of $g$. Similarly, $\gamma^-$ converges to the unstable foliation of $g$. Hence, the vertical and horizontal foliations defining $\gamma$ and $\gamma_g$ agree, and so $\gamma$ and $\gamma_g$ are equal, up to a reparametrization. This, however, contradicts $(2)$, completing the proof of the claim.
\end{proof}

Returning to the proof of the lemma, let $q_n$ be the quadratic differential associated to $s_n \in \gamma_n$. Now we claim that $q_n \to q$ in $\QD(S)$. This follows exactly as in the proof of the claim: if not, then after passing to a subsequence, $q_n \to q' \neq q$ based at $s$. But then $\gamma_n$ would converge (uniformly on compact sets) to the Teichm\"uller geodesics determined by $q'$. 
Since we know that $\gamma_n$ converges to $\gamma$, this gives a contradiction and completes the proof.
\end{proof}



Recall that $\GQD(S)$ denotes the principal stratum of quadratic differentials on $S$. 


\begin{corollary}\label{cor:q-conv}
Let $g\in \mathrm{Mod}(S)$ be a principal pseudo-Anosov with Teichm\"uller axis $\gamma_g$. Let $\mu$ be a probability distribution on $\Mod(S)$ with finite first moment, such that $\semigroup{\mu}$ is non-elementary and contains $g$.
 Then for almost every sample path $\omega=(\omega_n)$ in $\mathrm{Mod}(S)$, there is a positive integer $N$ such that for $n\ge N$, every $\omega_n$ is a principal pseudo-Anosov and $h_n q_{\omega_n} \to q_g$ in $\GQD(S)$, where $h_n\in \Mod(S)$ and $q_g$ is some quadratic differential along the axis $\gamma_g$.
\end{corollary}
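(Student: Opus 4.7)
The plan is to combine two facts established above: \Cref{thm:GM}, which yields a translate of $\gamma_g$ that $\rho$--fellow travels $\gamma_{\omega_n}$ for any desired finite distance $D$, and \Cref{lem:qd_converge}, which upgrades such fellow-traveling with $D_n\to\infty$ into honest convergence of quadratic differentials in $\QD(S)$. The bridge between them will be a diagonal argument over larger and larger fellow-traveling distances.

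The first step is to observe that the constant $\rho$ in \Cref{thm:GM} can be taken independent of $D$. Tracing through its proof, $\rho = B + B'$ where $B = B(\epsilon,F)$ and $B' = B'(\epsilon,r)$ are produced by Rafi's \Cref{thm:Rafi}; here $\epsilon$ depends only on the location of $\gamma_g$ in the thick part, $F$ comes from \Cref{prop:DH}, and $r$ comes from \Cref{lem:GM}. None of $\epsilon$, $F$, $r$ depend on $D$. With this observation, I would apply \Cref{thm:GM} once for each integer $D=k$, obtaining a single uniform $\rho$ together with a full-measure event $E_k$ on which some $N_k = N_k(\omega)$ works: for every $n\ge N_k$, $\omega_n$ is a principal pseudo-Anosov whose axis $\gamma_{\omega_n}$ is a $\rho$--fellow traveler with $h_n^{(k)}\gamma_g$ for distance $k$, for some $h_n^{(k)}\in\Mod(S)$.

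Next I would run the diagonal step. The countable intersection $E = \bigcap_k E_k$ still has full measure. For any fixed $\omega\in E$, after replacing $N_k$ by $\max\{N_1,\dots,N_k\}$ to make it strictly increasing, define $D_n = k$ and $h_n = h_n^{(k)}$ whenever $n\in [N_k,N_{k+1})$. Then for every $n\ge N_1$, the mapping class $\omega_n$ is a principal pseudo-Anosov and $\gamma_{\omega_n}$ is a $\rho$--fellow traveler with $h_n\gamma_g$ for distance $D_n$, with $D_n\to\infty$. Applying \Cref{lem:qd_converge} directly to this data produces quadratic differentials $q_{\omega_n}$ along the axes $\gamma_{\omega_n}$ such that $h_n q_{\omega_n}\to q_g$ in $\QD(S)$ for some $q_g$ on $\gamma_g$. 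Since $g$ is principal, $q_g\in\GQD(S)$, and since the principal stratum is open in $\QD(S)$, the convergence in fact takes place inside $\GQD(S)$, as required.

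The main obstacle I anticipate is verifying that the constant $\rho$ in \Cref{thm:GM} is truly independent of $D$: the theorem statement itself permits $\rho$ to depend on $D$, so the claim above relies on a careful audit of the internal constants produced in its proof. As a backup, one could instead allow $\rho_n=\rho(D_n)$ to depend on $n$ and prove a mild strengthening of \Cref{lem:qd_converge} in which $\rho_n$ is permitted to grow, provided for example that $\rho_n/D_n\to 0$; the limit-extraction/unique-ergodicity argument in that lemma's proof should adapt without serious difficulty, and the rest of the plan would proceed identically.
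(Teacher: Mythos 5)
Your proposal is correct and is essentially the paper's own argument: the paper likewise fixes the single constant $\rho$ from \Cref{thm:GM}, intersects the full-measure events $\mathcal{G}_D$ over all $D$ (using their monotonicity, which plays the role of your diagonal step) to get fellow-traveling distances $D_n \to \infty$, and then applies \Cref{lem:qd_converge} together with openness of $\GQD(S)$. Your worry about $\rho$ depending on $D$ is resolved exactly as you suspect: in the proof of \Cref{thm:GM} one has $\rho = B + B'$ with $B, B'$ depending only on $\epsilon$, $F$, and $r$, not on $D$, and the paper uses this uniformity implicitly.
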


\begin{proof}
Let $\rho > 0$ be the number guaranteed by \Cref{thm:GM}.
Let $PPA \subset \Mod(S)$ be the set of principal pseudo-Anosovs and define the set 
\begin{align*}
\mathcal{G}_D = \{\omega : \: &\exists N \ge0 \text{ such that } \forall n \ge N, \, \omega_n \in PPA \: \text{ and }   \\
	& \gamma_{\omega_n} \text{ is a  $\rho$--fellow traveler with a translate of $\gamma_g$ for distance } D\}.
\end{align*}
By \Cref{thm:GM}, $\mathbb{P} (\mathcal{G}_D) = 1$ for all $D$. Since $\mathcal{G}_{D'} \subset \mathcal{G}_{D}$ for $D \le D'$, we set $\mathcal{G} = \bigcap \mathcal{G}_D$ and conclude that $\mathbb{P}(\mathcal{G}) =1$.

Now for each $\omega \in \mathcal{G}$, there is a sequence of mapping classes $h_n$, such that $h_n \gamma_{\omega_n}$ is a $\rho$--fellow traveler with $\gamma_{g}$ for distance $D_n$, where $D_n \to \infty$ as $n \to \infty$. Applying \Cref{lem:qd_converge}  and recalling that $\GQD(S)$ is open in $\QD(S)$ completes the proof.
\end{proof}

\begin{remark}\label{Rem:q-conv-ptorus}
The only property of the principal stratum  that we used in this section is that $\GQD(S)$ is open. As a consequence, all of the results in this section hold for $S \cong \Sigma_{1,1}$, with $\GQD(S)$ replaced by $\QD(S)$. 
In particular,  \Cref{cor:q-conv} applies to every pseudo-Anosov in $\Mod(\Sigma_{1,1})$, with the word ``principal'' excised.
\end{remark}


\section{Transition to punctured surfaces}\label{sec:transition}

%

Recall from \Cref{Sec:VeeringBackground} that the construction of a veering triangulation starts with a quadratic differential $q \in \QD(S)$,  punctures $S$ along the singularities of $q$ to obtain a surface $\Scirc$, and then builds an ideal triangulation of $\Scirc \times \RR$. We need to analyze the veering triangulations not just for one $q \in \QD(S)$, but for an entire convergent sequence $q_n \to q$ that comes from \Cref{cor:q-conv}. To do this, we need a coherent way to map the sequence $q_n \to q$ to a convergent sequence $\qcirc_n \to \qcirc \in \QD(\Scirc)$.


Let $\QD_p(\Scirc)$ denote the  subspace of $\QD(\Scirc)$ consisting of quadratic differentials whose singularities occur only at punctures. Then $\QD_p(\Scirc)$ is a union of strata.
If $\qcirc \in \mathcal{QD}_p(\Scirc)$ is a quadratic differential   with at least $2$ prongs at any puncture being filled in $S$, then $\qcirc$ defines a quadratic differential $q \in \mathcal{QD}(S)$. Implicit in this definition is the observation that markings on $\Scirc$ induce markings on $S$.  The following is immediate:

\begin{lemma}\label{Lem:FillQuadDiff}
Let $Q \subset \mathcal{QD}_p(\Scirc)$ be a stratum of  $\mathcal{QD}(\Scirc)$. Let $S$ be the result of filling some number of punctures of $\Scirc$, so that a representative element $\qcirc \in \Scirc$ has at least $2$ prongs at every puncture being filled. Then the assignment $\qcirc \mapsto q \in \mathcal{QD}(S)$ defines a continuous map $g \from Q \to \mathcal{QD}(S)$, whose image is a stratum.
 \qed
\end{lemma}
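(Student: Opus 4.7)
The plan is to proceed in three stages: define $g$, show that $g(Q)$ is a single stratum of $\QD(S)$, and verify continuity. The continuity step is the main technical point.

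First, for the construction: fix $\qcirc \in Q$ and let $p$ be a puncture of $\Scirc$ that gets filled in $S$. The hypothesis on prong counts implies that, in natural coordinates around $p$, $\qcirc$ has the local form $z^k\,dz^2$ with $k \geq 0$; in particular $\qcirc$ extends analytically across $p$ (as a regular point when $k=0$, or as a zero of order $k \geq 1$). Extending at every filled puncture yields a meromorphic quadratic differential $q$ on $S$, whose only poles are the unfilled punctures, retaining the same orders as in $\qcirc$. Set $g(\qcirc) := q$. Since all elements of $Q$ share identical combinatorial singularity data, and $g$ uniformly recasts each filled puncture of prong count $k+2 \geq 2$ as an interior zero of order $k$, all images $g(\qcirc)$ share the same combinatorial data on $S$ and therefore lie in a single stratum $Q' \subset \QD(S)$; conversely, any $q \in Q'$ lifts to a unique $\qcirc \in Q$ by marking the appropriate interior zeros as punctures, so $g(Q) = Q'$.

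Continuity is the crux. Viewing $\QD(\Sigma)$ as the cotangent bundle of $\Teich(\Sigma)$, two pieces are needed: continuity on base complex structures and on the differentials themselves. For the base, the forgetful map $\Teich(\Scirc) \to \Teich(S)$ that fills in the selected punctures is continuous, so if $\qcirc_n \to \qcirc$ with underlying structures $X_n^\circ \to X^\circ$, then $X_n \to X$ in $\Teich(S)$. For the differential data, the cleanest route is via the Hubbard--Masur homeomorphism $\QD(\Sigma) \cong \mathrm{Fill}^2(\Sigma) \subset \MF(\Sigma) \times \MF(\Sigma)$ recalled in \Cref{Sec:Intersection}: because all $\qcirc_n$ lie in the fixed stratum $Q$, their horizontal and vertical foliations have identical prong structures at every puncture, with at least two prongs at each filled puncture. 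These foliations therefore descend unambiguously to measured foliations on $S$ whose pair is $(\fol_q^+, \fol_q^-)$, and the induced ``puncture-filling'' map on measured foliations is continuous in the intersection-number topology since $i(\cdot,\cdot)$ is insensitive to the marked-point status of a singularity. Composing with Hubbard--Masur on both sides converts this into continuity of $g$. The main obstacle is making precise the puncture-filling map at the foliation level, which is routine once one fixes a marking in which the prong structure at each filled puncture is locally constant, but requires some care with Whitehead equivalence classes.
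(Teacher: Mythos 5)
Your setup and the stratum computation are fine, but the step you yourself identify as the crux --- continuity --- is not actually established, and the one-line justification offered for it is wrong as stated. You claim the puncture-filling map on measured foliations is continuous ``since $i(\cdot,\cdot)$ is insensitive to the marked-point status of a singularity.'' It is not: two essential curves $\alpha', \alpha''$ on $\Scirc$ that differ by pushing a filled puncture around a loop become isotopic in $S$, while $i(\F_{\qcirc},\alpha')$ and $i(\F_{\qcirc},\alpha'')$ can differ wildly. The correct relation is $i(\mathrm{fill}(\F_{\qcirc}),\alpha)=\inf_{\alpha'} i(\F_{\qcirc},\alpha')$, the infimum taken over the (infinite, point-pushing orbit of) lifts $\alpha'$ of $\alpha$ to $\Scirc$. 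An infimum of continuous functions is only upper semicontinuous, so continuity of $\qcirc \mapsto \mathrm{fill}(\F^{\pm}_{\qcirc})$ in the intersection-number topology needs a genuine argument --- typically a local finiteness statement saying that near a given $\qcirc$ only a fixed finite set of lifts can be near-minimal, in the spirit of \Cref{Claim:finiteness} later in the paper. You acknowledge ``some care with Whitehead equivalence classes'' is needed, but the missing content is precisely this semicontinuity/finiteness point, not Whitehead moves; as written the central step of your continuity argument is asserted rather than proved.

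Two further comments. First, your claim that every $q$ in the image stratum ``lifts to a \emph{unique} $\qcirc \in Q$'' is false: markings on $S$ do not determine markings on $\Scirc$ (the kernel of $\Mod(\Scirc)\to\Mod(S)$ acts on the fibers of $g$), which is exactly why the paper needs the local section constructed in \Cref{lem:DrillQuadDiff}. Fortunately only \emph{existence} of a lift is needed to see that the image is a whole stratum, so this does not damage that part. Second, for comparison: the paper gives no argument at all --- the lemma is declared immediate --- because the prong hypothesis means $\qcirc$ extends holomorphically (as a regular point or zero) across each filled puncture, markings descend under the forgetful map, and convergence in $\QD(\Scirc)$ (nearly conformal comparison maps together with convergence of the differentials) passes verbatim to $\QD(S)$ after extending those maps and differentials across the punctures. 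That tautological route avoids Hubbard--Masur and the foliation-level filling map entirely, which is where all the difficulty in your proposal is concentrated.
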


We need to go in the opposite direction, puncturing $S$ at singularities of $q \in \mathcal{QD}(S)$ to obtain $\Scirc$. This is is not as straightforward, because  the surjection $\Mod(\Scirc) \to \Mod(S)$ has a large kernel, hence there is no consistent way to turn markings on $S$ into markings on $\Scirc$. Nevertheless, this can be done locally in the principal stratum.

\begin{lemma}\label{lem:DrillQuadDiff}
Let $q \in \GQD(S)$ be a quadratic differential in the principal stratum. Let $\Scirc$ be the result of puncturing the singularities of $q$. Then there is an open neighborhood $U$ of $q$, with an embedding $f \from U \to \mathcal{QD}_p(\Scirc)$ such that $g \circ f = \operatorname{id}_{U}$ where $g$ is the map of \Cref{Lem:FillQuadDiff}.
%
\end{lemma}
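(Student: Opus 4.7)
The plan is to construct $f$ as a continuous local section of the filling map $g$ from \Cref{Lem:FillQuadDiff}, by continuously tracking the positions of the zeros of $q$ as $q$ varies near the fixed differential (call it $q_0$). Since $q_0 \in \GQD(S)$, its zeros on the base Riemann surface $X_0$ are simple $3$-prong singularities located at pairwise distinct interior points. First I would fix a labeling $p_1,\dots,p_k$ of these zeros, pull them back via a chosen marking $h_0 \from S \to X_0$ to obtain distinguished points $z_1,\dots,z_k \in S$, and define $\Scirc = S \setminus \{z_1,\dots,z_k\}$ (keeping any original punctures of $S$). Then $h_0|_{\Scirc}$ exhibits a canonical lift $\tilde X_0 \in \Teich(\Scirc)$ of $X_0$, and $q_0$ determines a corresponding $\qcirc_0 \in \QD_p(\Scirc)$ with $g(\qcirc_0) = q_0$.

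To extend this to a neighborhood $U$ of $q_0$, I would choose continuous representatives $(X_q, h_q)$ of Teichm\"uller classes along a local trivialization of the universal curve over $\QD(S)$, so that the zero set of each $q$, viewed via $h_q$-pullback, is an unordered $k$-tuple of points on $S$ depending continuously on $q$ (the implicit function theorem applies because the zeros of $q_0$ are simple). Shrinking $U$ if necessary, these zeros remain distinct and admit a consistent continuous labeling $z_1(q),\dots,z_k(q)$ with $z_i(q_0) = z_i$. For each $q$ I would then pick a diffeomorphism $\psi_q \from S \to S$ close to the identity, supported in pairwise disjoint coordinate disks around the $z_i$, and sending each $z_i$ to $z_i(q)$. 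The restriction of $h_q \circ \psi_q$ to $\Scirc$ is then a marking into the punctured surface $X_q \setminus \{p_i(q)\}$, and together with $q$ this defines $f(q) \in \QD_p(\Scirc)$.

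By construction, filling in the $k$ extra punctures undoes the isotopy $\psi_q$ (which is isotopic to the identity on $S$) and recovers $q$, giving $g \circ f = \operatorname{id}_U$. This identity makes $f$ injective. Continuity of $f$ follows from the continuity of the zero tracking together with continuity of the constructions in Teichm\"uller space, and since $f$ is a continuous injection admitting the continuous left inverse $g$, it is a topological embedding, as required.

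The main obstacle will be verifying that the class $f(q) \in \Teich(\Scirc)$ is independent of the auxiliary choice of isotopy $\psi_q$. Any two candidate isotopies differ by a diffeomorphism of $S$ fixing $\{z_i\} \cup \{z_i(q)\}$ setwise and isotopic to the identity on $S$; provided $U$ is taken small enough that each $z_i(q)$ lies in a simply connected coordinate disk around $z_i$ disjoint from the other disks, such a difference is isotopic to the identity on $\Scirc$ as well. This well-definedness is exactly what upgrades the globally multivalued ``inverse'' of $g$ (corresponding to the various possible labelings of zeros) to a genuine local section, and hence is the key point that localizes the construction to the embedded neighborhood $U$.
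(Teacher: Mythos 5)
Your proposal is correct and follows essentially the same route as the paper: track the (simple, hence continuously varying) singularities of nearby differentials, move them back to fixed positions by a diffeomorphism supported in small pairwise disjoint disks, and kill the ambiguity in that choice using the triviality of the mapping class group of a once-punctured disk, so that $g$ serves as a continuous left inverse certifying the embedding. The only cosmetic difference is that the paper transports singularities to the fixed reference surface $X(q)$ via the canonical Teichm\"uller map $X(q') \to X(q)$, rather than choosing continuous representatives through a local trivialization of the universal curve as you do.
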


\begin{proof}
Let $X(q) \in \Teich(S)$ be the marked conformal structure underlying $q$. Let  $y_1^q, \ldots, y_k^q \in X(q)$ be the singularities of $q$. Let $\epsilon > 0$ be such that there are pairwise disjoint regular neighborhoods $N_{\epsilon}(y_1^q), \ldots, N_\epsilon(y_k^q)$.

%

Now, let $q' \in \GQD(S)$ be another quadratic differential in the principal stratum, with singularities $y_1^{q'}, \ldots, y_k^{q'}  \in X(q)$. There is a unique Teichm\"uller map $h \from X(q') \to X(q)$ which maps the singularities of $q'$ to a $k$--tuple of points $h \big( y_1^{q'} \big), \ldots, h\big(y_k^{q'} \big) \in X(q)$. Because $h$ is uniquely defined by the pair $(q', q)$, these points of $X(q)$ are uniquely determined up to reordering. Thus there is an open neighborhood $U$ of $q$ such that for $q' \in U$, 
the singularities of $q'$ can be ordered so that $h\big( y_i^{q'} \big) \in N_\epsilon(y_i^q)$, for a unique point $y_i^q$.

Let $\Scirc = X(q) \setminus \{ y_1^q,  \ldots y_k^q \} $. For every $q' \in U$, we will define a marked conformal structure on $\Scirc$, as follows. Let $\mathring{X}(q') = X(q') \setminus \{ y_1^{q'}, \ldots, y_k^{q'} \}$. This conformal structure is marked by the composition map
\begin{align*}
\Scirc = X(q) \setminus \{ y_1^q,  \ldots , y_k^q \} 
&\xrightarrow{\:\:\: r \:\:\: } X(q) \setminus \{ h \big( y_1^{q'} \big), \ldots, h\big(y_k^{q'} \big) \} \\
&\xrightarrow{\: h^{-1} } X(q') \setminus \{ y_1^{q'}, \ldots, y_k^{q'} \} \qquad \quad = \mathring{X}(q').
%
\end{align*}
where $r$ is the identity on the complement of $N_{\epsilon}(y_1^q) \cup \ldots \cup N_\epsilon(y_k^q)$. The composition $h^{-1} \circ r$ is well-defined up to isotopy because the mapping class group of a punctured disk is trivial.

Now, the quadratic differential $q'$ on the marked Riemann surface $X(q')$  restricts to a quadratic differential ${\qcirc}'$ on the marked Riemann surface $\mathring{X}(q')$. By construction, all singularities of  ${\qcirc}'$ are at the punctures, hence  ${\qcirc}' \in \mathcal{QD}_p(\Scirc)$. The map $f \from U \to \mathcal{QD}_p(\Scirc)$ defined via   $q' \mapsto {\qcirc}'$ is continuous by construction. It is one-to-one because the map $g$ of \Cref{Lem:FillQuadDiff} provides an inverse.
\end{proof}

%
%
%
%
%
%

For the next two sections, we will work primarily in the punctured surface $\Scirc$.


\section{Convergence of veering triangulations}\label{sec:convergence}

Let $\Scirc$ be a surface with at least one puncture.
The main result of this section, \Cref{Cor:ComplexEmbeds}, says that veering triangulations of $\Scirc \times \RR$ depend continuously on their defining quadratic differentials. More precisely, we will show that given an appropriate convergent sequence $q_n \to q \in \QD(\Scirc)$,  the corresponding veering triangulations $\tri_{q_n}$ agree with $\tri_q$ on larger and larger finite sets of tetrahedra, limiting to the entire triangulation $\tri_q$.

Recall from \Cref{sec:transition} that $\QD_p(\Scirc)$ is the  subspace of $\QD(\Scirc)$ consisting of quadratic differentials whose singularities occur at punctures of $\Scirc$.
We define $\e\QD_p(\Scirc) \subset \QD_p(\Scirc)$ to be the subspace of quadratic differentials without vertical or horizontal saddle connections. 
In \Cref{Sec:VeeringBackground}, these are exactly the quadratic differentials on $\Scirc$ that define veering triangulations of $\Scirc \times \RR$. The symbol $\mathcal{E}$ stands for ``ending;'' see the discussion following \Cref{Thm:HomeoELC}.

The following easy lemma will be useful in \Cref{sec:elc}.


\begin{lemma} \label{lem:fill}
For every $q \in \e\QD_p(\Scirc)$, the foliations $\F^+_q$ and $\F^-_q$ are filling. 
\end{lemma}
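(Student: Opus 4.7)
The plan is to argue by contradiction. Suppose $\F^+_q$ is not filling; then some essential simple closed curve $\alpha\subset\Scirc$ satisfies $i(\alpha,\F^+_q)=0$. The argument for $\F^-_q$ is symmetric, so handling this case suffices.

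The first step is to extract vertical structure from $\alpha$. Working in the closed completion $S$ obtained from $\Scirc$ by filling in the punctures, the differential $q$ defines a singular flat metric whose singularity set includes every puncture of $\Scirc$, per the convention recorded in \Cref{Sec:VeeringBackground}. Since $i(\alpha,\F^+_q)$ equals the total transverse $|dx|$-measure of any sufficiently taut representative, the hypothesis $i(\alpha,\F^+_q)=0$ forces $\alpha$ to be freely homotopic to a closed curve built entirely from vertical pieces of this flat metric. Two mutually exclusive cases arise: either (i) $\alpha$ is homotopic to a regular closed leaf of $\F^+_q$, which necessarily lies in a maximal flat cylinder of parallel closed vertical leaves, or (ii) $\alpha$ has a representative that is a closed concatenation of vertical saddle connections.

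The second step is to rule out both cases using $q\in\e\QD_p(\Scirc)$. Case (ii) is immediate, since $q$ has no vertical saddle connections at all. In case (i), the maximal vertical cylinder has boundary whose components are closed vertical paths in $S$ passing through singularities --- in other words, concatenations of vertical saddle connections --- unless the cylinder exhausts all of $S$ and $S$ carries no singularities whatsoever. The first alternative again contradicts $q\in\e\QD_p(\Scirc)$, while the second contradicts the standing hypothesis that $\Scirc$ has at least one puncture (which, by convention, is a singularity of $q$). Hence no such $\alpha$ exists, and $\F^+_q$ is filling.

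I expect the entire argument to be very short; the main thing to keep track of is the convention that punctures count among the singularities of $q$, so that a vertical saddle connection can have a puncture as one or both of its endpoints. With that convention in force, the conclusion is a routine application of the structure theorem for closed trajectories on a singular flat surface.
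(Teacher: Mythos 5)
Your argument is correct and is essentially the paper's proof: the paper also assumes $i(\alpha,\F^+_q)=0$, notes that the $q$--geodesic representative of $\alpha$ is a concatenation of saddle connections (citing Rafi and Duchin--Leininger--Rafi), each of which would have to be vertical, contradicting $q\in\e\QD_p(\Scirc)$. The only difference is that you spell out the flat-cylinder alternative explicitly (ruling it out via the puncture-as-singularity convention), whereas the paper absorbs that case into the cited structure result.
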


\begin{proof}
Suppose for a contradiction that $\F = \F^+_q$ is not filling. Then there is some closed essential curve $\alpha \subset \Scirc$ with $i(\alpha, \F) = 0$. The $q$--geodesic representative $\alpha_q$ of $\alpha$ is a concatenation of saddle connections (see \cite{Rafi2005} or \cite{DLR10}) and since $i(\alpha, \F) = 0$, each of these saddle connections must be vertical, a contradiction. The proof for $\F^-_q$ is identical.
\end{proof}

As in \Cref{Sec:VeeringBackground}, for each $q \in \e\QD_p(\Scirc)$ we have an associated veering triangulation $\tri = \tri_q$ of $\Scirc \times \RR$. (Note that no further puncturing is necessary because all singularities are already at the punctures of $\Scirc$.)
Let $\A(\tri) = \A(\tri_q)$ be the subset of $\A(\Scirc)$ consisting of arcs that correspond to edges of $\tri_q$.
As described immediately after \Cref{Thm:Gueritaud}, 
 $\A(\tri_q)$ is precisely the set of saddle connections of $q$ that span singularity free rectangles.

%


Let $a, a_1, \ldots ,a_n \in \A(\Scirc)$ be arcs. We call the collection $\{a_1, \ldots, a_n\}$ a 
\define{homotopical decomposition} of $a$, and write  $a \sim \sum_i a_i$,  if these arcs have lifts $\widetilde a, \widetilde a_1, \ldots ,\widetilde a_n$ to the universal cover of $\Scirc$ which bound an  immersed ideal $(n+1)$-gon (which is degenerate if $n=1$). The decomposition is \define{nontrivial} if $n > 1$.

Recall from \Cref{Sec:Intersection} that the horizontal and vertical lengths of $a$ are denoted $h_q(a)$ and $v_q(a)$, whereas $\ell^1_q(a) = h_q(a) + v_q(a)$ is the total $\ell^1$ length.

\begin{lemma} \label{lem:veering_detect}
Let $q \in \e\QD_p(\Scirc)$ and $a\in \A(\Scirc)$. Then $a \in \A(\tri_q)$ if and only if for any nontrivial homotopical decomposition $a \sim \sum a_i$ with $a_i \in \A(\Scirc)$, we have
\begin{equation}\label{Eqn:ShorterThanSum}
\ell_q^1(a) < \sum \ell_q^1(a_i).
\end{equation}
\end{lemma}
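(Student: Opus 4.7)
The plan is to exploit the flat-geometry interpretation of $\ell^1_q$: since $h_q(\alpha) = i(\alpha, \F^+_q)$ and $v_q(\alpha) = i(\alpha, \F^-_q)$, the quantity $\ell^1_q(\alpha)$ is the infimum, over representatives of $\alpha$, of the total horizontal plus vertical transverse measure. Given any homotopical decomposition $a \sim \sum a_i$, the concatenation of the $q$-geodesic (straightened) representatives of the $a_i$'s is itself a representative of $a$, so we obtain both $h_q(a) \leq \sum h_q(a_i)$ and $v_q(a) \leq \sum v_q(a_i)$, hence the weak inequality $\ell^1_q(a) \leq \sum \ell^1_q(a_i)$. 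The content of the lemma is really a characterization of when this inequality is strict.

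For the forward direction, I would suppose $a \in \A(\tri_q)$ spans a singularity-free rectangle $\mathcal{R}$, with $p_0, p_1$ the diagonal corners at which $a$'s endpoints sit. Equality $\ell^1_q(a) = \sum \ell^1_q(a_i)$ would force the concatenation of straightened $a_i$'s to realize the minimum transverse measure for both foliations simultaneously, which, since $q$ has no horizontal or vertical saddle connections, is possible only when the broken path is monotone in both the horizontal and vertical directions. Such a monotone path from $p_0$ to $p_1$ necessarily stays inside $\mathcal{R}$, and its intermediate vertices are singularities lying in $\mathcal{R}$. Because $q$ has no horizontal or vertical saddle connections, the sides of $\mathcal{R}$ contain no punctures other than $p_0$ and $p_1$, so any intermediate singularities must lie in the open interior of $\mathcal{R}$. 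Since $\mathcal{R}$ is singularity-free by hypothesis, no intermediate vertices exist, forcing $n = 1$ and a trivial decomposition. Thus every nontrivial decomposition yields strict inequality.

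For the backward direction I would argue the contrapositive in two cases. In the first case, $a$'s $q$-geodesic is a concatenation of $k \geq 2$ saddle connections $s_1, \dots, s_k$; then $a \sim \sum [s_j]$ is a nontrivial decomposition satisfying $\sum \ell^1_q([s_j]) = \ell^1_q(a)$ exactly, since transverse measures add along a geodesic concatenation. In the second case, $a$ is itself a saddle connection but the rectangle $\mathcal{R}$ it spans contains a singularity in its interior; here I would construct a monotone up-right broken path of saddle connections from $p_0$ to $p_1$ through interior singularities of $\mathcal{R}$, built greedily by stepping at each stage to a singularity minimal in the coordinate-wise partial order above the current position. Minimality ensures every connecting segment is a genuine saddle connection, the process terminates at $p_1$ after at least two steps, and telescoping horizontal and vertical displacements yield $\sum \ell^1_q(a_j) = h_q(a) + v_q(a) = \ell^1_q(a)$.

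The main technical obstacle lies in this second case: I must verify that the geometric broken path actually defines a \emph{homotopical} decomposition of $a$ in $\Scirc$, i.e., that lifts of $\tilde a$ and the $\tilde a_j$'s bound an immersed ideal polygon in the universal cover $\widetilde\Scirc$, equivalently that the concatenation is homotopic to $a$ rel endpoints in $\Scirc$. This fails if the polygonal region in $\mathcal{R}$ bounded by $a$ and the broken path contains extra punctures that are not on the path. My plan to handle this is to restrict the construction to singularities lying strictly on one side of the diagonal $a$ (say above), and to build the staircase from $p_0$ to $p_1$ along the lower boundary of the convex hull of these singularities together with $\{p_0, p_1\}$: by the convex-hull property, the polygonal region between $a$ and this staircase is free of interior punctures, so the required homotopy exists. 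A symmetric construction handles the situation when all interior singularities of $\mathcal{R}$ lie below $a$, and at least one side must contain a singularity by the Case 2 hypothesis.
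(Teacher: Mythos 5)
Your forward direction and the first case of your converse are essentially the paper's argument: equality in a nontrivial decomposition would force an $\ell^1$--geodesic broken path inside the singularity-free rectangle spanned by $a$, and conversely the $q$--geodesic representative of $a$ already gives an equality-realizing decomposition unless $a$ is itself a saddle connection. The genuine gap is in your second case (a saddle connection $c$ that spans no singularity-free rectangle), which you correctly flag as the crux but do not actually close. Your convex-hull prescription fails: since every singularity under consideration lies \emph{strictly} above the line through $p_0$ and $p_1$, the segment $p_0p_1$ is always an edge of the convex hull of those singularities together with $\{p_0,p_1\}$, so the ``lower boundary'' of that hull is just $c$ itself and returns the trivial decomposition; and the opposite boundary chain does not work either. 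Concretely, normalize so $p_0=(0,0)$, $p_1=(10,10)$, with the singularities above the diagonal at $(1,9)$ and $(5,6)$. The hull is the triangle $(0,0),(1,9),(10,10)$, whose non-diagonal chain passes through $(1,9)$ and traps $(5,6)$ between itself and $c$, so it is not a homotopical decomposition; the only admissible monotone path is $(0,0)\to(5,6)\to(10,10)$, and $(5,6)$ is an \emph{interior} point of the hull. Your greedy rule (step to any coordinate-wise minimal singularity) has the same defect: it may step to $(1,9)$ first and enclose $(5,6)$. So neither version of your construction produces the required staircase in general.

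There is also a more basic issue with the setup of this case: when the obstructing singularities are genuine cone points (simple zeros or poles), the flat structure does not develop across the full rectangle with diagonal $c$, so the global Euclidean picture in which you take convex hulls of ``the singularities in the interior of $\mathcal{R}$'' is not available without an inductive sweeping argument that only develops the singularity-free region as it is constructed. The paper avoids both problems by quoting Minsky--Taylor: it applies their map $\mathbf{t}$ from \cite[Section 4.2]{MT17} to the side of $c$ that bounds no singularity-free right triangle, and uses \cite[Lemma 4.2]{MT17} (each horizontal/vertical leaf meets $\mathbf{t}(c)$ at most once) to get a nontrivial homotopical decomposition $c \sim \sum_j c_j$ with $\ell^1_q(c) = \sum_j \ell^1_q(c_j)$. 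If you want a self-contained proof you would need to reconstruct that argument (a ``lowest monotone envelope'' built by sweeping upward from $c$), which is more delicate than a convex hull; as written, your Case 2 does not go through.
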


\begin{proof}
Suppose that $a$ is homotopic to an edge $\sigma$ of the veering triangulation and $a \sim \sum a_i$.  Since $\sigma$ spans a singularity free rectangle, the total horizontal or vertical length of the $a_i$ must be strictly greater than that of $\sigma$. This is because, after lifting to the universal cover of $\Scirc$, all $\ell^1$ geodesics between the endpoints of $\sigma$ must lie in the rectangle spanned by $\sigma$.
As we always have $h_q(a) \le \sum h_q(a_i)$ and $v_q(a) \le \sum v_q(a_i)$, the strict inequality \eqref{Eqn:ShorterThanSum} follows.

The converse direction follows from the work of Minsky and Taylor \cite{MT17}. 
First recall that every arc $a$ has a unique $q$--geodesic representative $a_q$. See \cite[Proposition 2.2 and Figure 6]{MT17}. This geodesic $a_q$ follows a sequence of saddle connections, which we may call $a_1, \ldots, a_n$, such that $a \sim \sum a_i$. Since $a_q$ is a geodesic, we have
\[
\ell_q^1(a) = \ell_q^1(a_q)  = \sum \ell_q^1(a_i).
\]
Thus we have proved the negation of \eqref{Eqn:ShorterThanSum}, 
unless $a$ is itself homotopic to a saddle connection $c$, i.e., the sum $\sum a_i$ has only one term.

Now, suppose that $a = c$ is a saddle connection that is not an edge of $\tri_q$. Then $c$ does not span a singularity free rectangle of $q$. Hence, $c$ does not span a singularity free right triangle to one of its sides. To this side, we apply the map $\mathbf{t}$ that is defined in \cite[Section 4.2]{MT17}. 
The resulting object $\mathbf{t}(c)$ is a concatenation of (not necessarily disjoint) saddle connections $c_j$, forming a non-trivial decomposition $c \sim \sum_j c_j$. By \cite[Lemma 4.2]{MT17}, these saddle connections have the property that, working in the universal cover of $\Scirc$, each leaf of the vertical/horizontal foliation of $q$ meets the union $\bigcup_j c_j$ at most once. (The reader can see this property 
illustrated in \cite[Figure 12]{MT17}.)
Hence, 
\[
\ell_q^1(a) = \ell_q^1(c)= \sum \ell_q^1(c_j)
\]
and the sum is non-trivial, contradicting \eqref{Eqn:ShorterThanSum}.
\end{proof}


\begin{lemma} \label{lem:edges}
Fix $q \in \e\QD_p(\Scirc)$.
For any $L\ge0 $, there is an open neighborhood $U$ of $q$ in $\QD(\Scirc)$ such that for any $q' \in U \cap \e\QD_p(\Scirc)$, every arc $\sigma \in \A(\tri_q)$ of length $\ell^1_q(\sigma) \le L$ is also in $\A(\tri_{q'})$.
\end{lemma}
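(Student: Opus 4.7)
The plan is to verify the criterion of Lemma~\ref{lem:veering_detect} directly for $q'$: that every nontrivial decomposition $\sigma \sim \sum a_i$ of a short veering arc $\sigma \in \A(\tri_q)$ satisfies $\ell^1_{q'}(\sigma) < \sum \ell^1_{q'}(a_i)$. First observe that $\A(\tri_q) \cap \{\ell^1_q \le L\}$ is finite, since $q$-saddle connections of bounded length in a finite-area flat surface form a discrete set; call the elements $\sigma_1, \dots, \sigma_k$. The strategy then has two further ingredients: a uniform gap in the inequality of Lemma~\ref{lem:veering_detect} applied to $q$, and uniform multiplicative continuity of $\ell^1$ in $q'$.

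For each $\sigma_j$ I would produce a uniform gap $\delta_j > 0$ such that every nontrivial decomposition $\sigma_j \sim \sum a_i$ satisfies $\sum \ell^1_q(a_i) \ge \ell^1_q(\sigma_j) + \delta_j$. Decompositions with $\sum \ell^1_q(a_i) > 2L$ already enjoy a gap of at least $L$. For decompositions with $\sum \ell^1_q(a_i) \le 2L$, I would replace each $a_i$ by its $q$-geodesic representative, a concatenation of $q$-saddle connections $c_\ell$. Since horizontal and vertical measures add along a $q$-geodesic, this refinement preserves the total $\ell^1_q$-length, so the $c_\ell$'s are drawn from the finite set of $q$-saddle connections of length at most $2L$, and the number of pieces is bounded by $2L/\ell_{\min}$. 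Only finitely many refined decompositions remain, each still nontrivial because refining never decreases the piece count, so Lemma~\ref{lem:veering_detect} applied to $q$ yields the positive minimum $\delta_j$.

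To propagate the gap to $q'$ I would establish uniform multiplicative continuity of $\ell^1$. By Lemma~\ref{lem:fill} the foliations $\F^\pm_q$ are filling, and since they are also transverse, the implication $i(\F^+_q,\F) = i(\F^-_q,\F) = 0 \Rightarrow \F = 0$ forces $\ell^1_q(\F) > 0$ for every nonzero $\F \in \MF(\Scirc)$. The ratio $\ell^1_{q'}(\F)/\ell^1_q(\F)$ is then well defined, homogeneous in $\F$, and jointly continuous on $\PMF(\Scirc) \times \QD(\Scirc)$ via the continuity of $i^1$ from \Cref{Sec:Intersection}; it equals $1$ when $q' = q$. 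Compactness of $\PMF(\Scirc)$ then yields, for each $\epsilon > 0$, a neighborhood $U_\epsilon$ of $q$ on which $(1-\epsilon)\ell^1_q(\F) \le \ell^1_{q'}(\F) \le (1+\epsilon)\ell^1_q(\F)$ uniformly in $\F$. Setting $\delta = \min_j \delta_j$ and choosing $\epsilon$ small enough that $(1-\epsilon)\delta > 2\epsilon L$, I would obtain, for every $\sigma_j$ and every nontrivial decomposition,
\[
\ell^1_{q'}(\sigma_j) \;\le\; (1+\epsilon)\ell^1_q(\sigma_j) \;<\; (1-\epsilon)\bigl(\ell^1_q(\sigma_j) + \delta\bigr) \;\le\; \sum \ell^1_{q'}(a_i),
\]
and Lemma~\ref{lem:veering_detect} applied to $q'$ then gives $\sigma_j \in \A(\tri_{q'})$.

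The main obstacle is the uniform gap step: the family of nontrivial decompositions of a fixed $\sigma_j$ is a priori infinite, so the pointwise strict inequality of Lemma~\ref{lem:veering_detect} does not by itself yield a positive infimum. The refinement-to-saddle-connections move, together with the discreteness of short saddle connections in a finite-area flat surface, is what reduces the problem to a finite subfamily on which the infimum is attained and Lemma~\ref{lem:veering_detect} can be invoked effectively.
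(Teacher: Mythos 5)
Your overall strategy is genuinely different from the paper's: you verify the criterion of \Cref{lem:veering_detect} for $q'$ against \emph{all} nontrivial decompositions, using a uniform gap at $q$ plus a uniform multiplicative comparison of $\ell^1$-lengths, whereas the paper only needs to rule out the single witness decomposition with \emph{equality} that the converse direction of \Cref{lem:veering_detect} produces when $\sigma \notin \A(\tri_{q'})$. Your uniform-gap step is essentially sound: refining a bounded-total-length decomposition into $q$-geodesic saddle connections shows that the realized gap values lie in a finite set of positive numbers (you should phrase it this way, since it is not immediate from the paper's immersed-polygon definition that the refined collection is again a homotopical decomposition; but you do not need that, because each realized value already equals the gap of an actual nontrivial decomposition, hence is positive by \Cref{lem:veering_detect}).

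The genuine gap is in the step ``uniform multiplicative continuity of $\ell^1$.'' Your compactness argument lives on $\PMF(\Scirc)$, so it yields $(1-\epsilon)\ell^1_q \le \ell^1_{q'} \le (1+\epsilon)\ell^1_q$ only for elements of $\MF(\Scirc)$, i.e.\ for measured foliations and weighted closed multicurves; the continuity of $i^1$ that the paper provides is likewise stated only on $\QD(\Scirc)\times\MF(\Scirc)$. But in the final chain you apply the lower bound to the pieces $a_i$ of an \emph{arbitrary} decomposition --- an infinite family of ideal arcs --- and arcs are not points of $\MF(\Scirc)$. There is no cheap two-sided bridge: the paper's own device, the multicurve $c_a$, gives only the one-sided estimate $\ell^1_q(c_a)\le 2\ell^1_q(a)$, and any comparison with a multiplicative constant bounded away from $1$ (or an additive error summed over up to $2L/\ell_{\min}$ pieces) destroys your inequality, because the gaps $\delta_j$ may be arbitrarily small relative to $L$. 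So as written the argument does not close; you would need an additional result (e.g.\ a marking-preserving comparison of the flat structures that distorts horizontal and vertical measures of arcs by at most $1\pm\epsilon$, uniformly for $q'$ near $q$ in $\e\QD_p(\Scirc)$), which is plausible but nontrivial and not available in the paper. The paper's proof avoids this entirely: if $\sigma\notin\A(\tri_{q'})$, the witness decomposition has equality of $\ell^1_{q'}$-lengths, so its pieces are automatically $q'$-short and hence lie in a finite set $B$ (finiteness proved by a compactness/contradiction argument using the curves $c_a$ and the filling pair $\fol^{\pm}_{q'}$); one then only needs continuity in $q'$ of the finitely many functions $f_a$, with no uniform estimate over all arcs.
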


\begin{proof}
For $q \in \e\QD_p(\Scirc)$ and $L\ge 0$, define $A_{q}(L) = \{a \in \A({\Scirc}) : \ell^1_q(a) \le L\}$. Note that $A_{q}(L)$ is always finite.
Now fix $L \ge0$ and let 
\[
U_1 = \{q' \in \QD(\Scirc) : \ell^1_{q'}(a) < L+1 \text{ for all } a \in A_q(L) \}.
\]
This is an open neighborhood of $q$ in $\QD(\Scirc)$. After making $U_1$ smaller if necessary, we can ensure that the closure $\overline U_1 \subset 
 \QD(\Scirc)$  is compact. 
This is done for the following claim:

\begin{claim}\label{Claim:finiteness}
The set 
\[
B = \big\{ a\in \A({\Scirc}) :  \ell^1_{q'}(a) \le L+1 \text{ for some } q' \in U_1 \big\}
\]
is finite. 
\end{claim}

\begin{proof}
For any arc $a \subset \Scirc$, there is an essential (multi-)curve $c_a$ constructed as follows. Consider the punctures of $\Scirc$ to be marked points in a larger surface $S$; build a regular neighborhood $P$ of $a$ and the marked points that it meets; then, take the $\Scirc$--essential components $\bdy P$. 
We remark that $P \cap \Scirc$ is a pair of pants containing $a$ as its only $\Scirc$--essential arc, hence $c_a$ determines $a$.
For any $q$, we have $\ell^1_q(c_a) \le 2\cdot \ell^1_q(a)$ because a 
representative of $c_a$ is given by traversing the $q$--geodesic representative for $a$ at most twice. 

Now suppose that the claim is false. Then there would be an infinite collection $a_i \in B$ and $q_i \in U_1$ with $\ell^1_{q_i}(a_i) <L+1$. Setting $c_i = c_{a_i}$, we obtain an infinite collection of distinct multi-curves $c_i$ with  $\ell^1_{q_i}(c_i) <2(L+1)$.
Since $\overline U_1$ is compact, we may pass to a subsequence such that
 $q_i \to q'$ for some  $q' \in \overline U_1$. Passing to a further subsequence and using compactness of $\PMF(\Scirc)$, there are constants $x_i \ge 0$ such that $x_ic_i$ converges in $\mathcal{MF}(\Scirc)$ to $\alpha \neq 0$. 
It is also easy to see that  $x_i \to 0$ as $i \to \infty$. Indeed, for an arbitrary (but fixed) hyperbolic metric $\rho$ on $S$, $x_i c_i \to \alpha$ implies that $x_i \ell_\rho(c_i) \to \ell_\rho(\alpha) \in \R_+$.  Since there are infinitely many distinct multi-curves $c_i$, we must have $\ell_\rho(c_i) \to \infty$, hence $x_i \to 0$.

Recall from \Cref{Sec:Intersection} that the $\ell^1$--length $\ell^1_q(\alpha)$ is continuous in both $q$ and $\alpha$. Thus
\[
 i(\fol_{q'}^+, \alpha) +  i(\fol_{q'}^-, \alpha) \: = \:  \ell^1_{q'}(\alpha) \: = \: \lim_{i \to \infty} \ell^1_{q_i}(x_i c_i) \: = \: \lim_{i \to \infty} x_i \, \ell^1_{q_i}(c_i) \: \leq \: 2(L+1) \lim x_i \: = \: 0.
\]
However, a measured foliation $\alpha$ cannot have 
intersection number $0$ with both $\fol_{q'}^+$ and $\fol_{q'}^-$, a filling pair of foliations.
This contradiction completes the proof of the claim.
\end{proof}

We now return to the proof of the lemma. For each $a \in \A(\tri_q) \cap A_{q}(L)$, we define the function $f_a \from U_1 \to \RR$:
\[
q' \in U_1 \: \longmapsto \: f_a (q') = \min \left\{ \sum_i \ell^1_{q'}(a_i) - \ell^1_{q'}(a) 
\: : \: a \sim \sum_i a_i \text{ is nontrivial and } a_i \in B
\right\}.
\]
Since $B$ is finite, this is a minimum of finitely many continuous functions, hence $f_a$ is continuous on $U_1$. Furthermore, since $a \in \A(\tri_q)$, \Cref{lem:veering_detect} implies that $f_a(q) > 0$. Set $U_a = U_1 \cap \{q' : f_a(q') >0\}$, which is open in $\QD(\Scirc)$ and contains $q$.

Finally, define
\[
U = \bigcap_{ a \in \A(\tri_q) \cap A_{q}(L) } U_a,
\]
which by construction is an open neighborhood of $q$ in $\mathcal{QD}(\Scirc)$. To show that this neighborhood satisfies the conclusion of the lemma, let $q' \in U \cap \e\QD_p(\Scirc)$ and suppose that $\sigma \in \A(\tri_q)$ with $\ell^1_q(\sigma) \le L$. If $\sigma$ is \emph{not} in $\tri_{q'}$, then by \Cref{lem:veering_detect} there is a decomposition $\sigma \sim \sum \sigma_i$ with $\ell^1_{q'}(\sigma) = \sum \ell^1_{q'}(\sigma_i)$. Since $U \subset U_1$, we have that $\ell^1_{q'}(\sigma) < L+1$ and so similarly $\ell^1_{q'}(\sigma_i)< L+1$ for all $i$. Hence, by definition of $B$, we have $\sigma_i \in B$ for each $i$. Then the difference 
\[
\sum \ell^1_{q'}(\sigma_i) - \ell^1_{q'}(\sigma)
\]
appears in the definition of $f_\sigma(q')$. Since $f_\sigma(q') >0$, the difference $\sum \ell^1_{q'}(\sigma_i) - \ell^1_{q'}(\sigma)$ is strictly positive, contradicting the choice of our decomposition of $\sigma$. This completes the proof.
\end{proof}

\begin{remark}
An alternate proof of \Cref{Claim:finiteness} relies on the fact that there is a constant $K$, depending only on the compact set $\overline U_1$, such that for any $q_1,q_2 \in \overline U_1$ the induced map $(\widetilde{\Scirc}, \widetilde{q_1}) \to (\widetilde{\Scirc}, \widetilde{q_2})$ is a $K$--quasi-isometry. Then $B \subset A_q(K(L+1)+K)$, and the claim follows. 
\end{remark}

\Cref{lem:edges} has the following useful corollary:

\begin{corollary}\label{Cor:ComplexEmbeds}
Let $q \in \e\QD_p(\Scirc)$, and let $K \subset \tau_q$ be any finite sub-complex. 
Then there is a neighborhood $U$ of $q$ in $\QD(\Scirc)$ such that
 $K \subset \tri_{q'}$ for every  $q' \in U \cap \e\QD_p(\Scirc)$.
\end{corollary}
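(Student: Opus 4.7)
The plan is to reduce this corollary to \Cref{lem:edges} and then leverage Gu\'eritaud's parametrization of tetrahedra by maximal singularity-free rectangles (\Cref{Thm:Gueritaud}) to promote control of the $1$--skeleton of $K$ to control of the entire sub-complex.

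First I would observe that, since $K \subset \tri_q$ is a finite sub-complex, it involves only finitely many edges. Setting
\[
L \: = \: \max\bigl\{ \ell_q^1(\sigma) : \sigma \text{ is an edge of } K \bigr\},
\]
\Cref{lem:edges} immediately yields an open neighborhood $U_0$ of $q$ in $\QD(\Scirc)$ such that every edge of $K$ belongs to $\A(\tri_{q'})$ whenever $q' \in U_0 \cap \e\QD_p(\Scirc)$. This puts the $1$--skeleton of $K$ inside $\tri_{q'}$.

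The remaining step is to upgrade from the $1$--skeleton to the full sub-complex. Each tetrahedron $\tet$ of $K$ corresponds via Gu\'eritaud's bijection to a maximal singularity-free rectangle $R_\tet$ in $q$ whose boundary meets four punctures of $\Scirc$ -- the ideal vertices of $\tet$ -- and whose six internal saddle connections are the edges of $\tet$.  To show that $\tet$ also appears in $\tri_{q'}$, I would argue that the same four punctures bound a maximal singularity-free rectangle in $q'$, with the same cyclic boundary arrangement. The key point is that, because $q' \in \QD_p(\Scirc)$ forces every singularity of $q'$ to lie at a puncture of $\Scirc$, the set of potential intruders into any rectangle is a fixed discrete set that does not move with $q'$. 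Combined with the continuous dependence of $\F^\pm_{q'}$ on $q'$, this lets me choose an open neighborhood $U_\tet$ of $q$ on which the four-puncture rectangle in $q'$ remains singularity-free and maximal, and an analogous neighborhood for each face of $K$. Intersecting
\[
U \: = \: U_0 \,\cap\, \bigcap_{\tet \in K} U_\tet
\]
over the finitely many tetrahedra and faces of $K$ produces the desired neighborhood.

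The hardest part will be formalizing the continuity argument: one must verify quantitatively that ``the maximal rectangle through four fixed punctures'' varies continuously with $q'$, and that both singularity-freeness and maximality are open conditions near $q$. This reduces to the observation that only finitely many candidate punctures could possibly encroach on a small neighborhood of the closed rectangle $R_\tet$, together with a compactness argument using continuous dependence of $q' \mapsto (\F^+_{q'}, \F^-_{q'})$, which is the same kind of input that underlies the proof of \Cref{lem:edges}.
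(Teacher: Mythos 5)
Your first step coincides exactly with the paper's proof: set $L$ equal to the maximal $\ell^1_q$--length of the edges of $K$ and apply \Cref{lem:edges} to obtain one neighborhood on which every edge of $K$ lies in $\A(\tri_{q'})$. The divergence is in how you pass from the $1$--skeleton to the tetrahedra. The paper takes no further analytic step and does not shrink the neighborhood again: it concludes directly that once all six edges of a tetrahedron $\tet \subset K$ belong to $\A(\tri_{q'})$, the tetrahedron itself lies in $\tri_{q'}$, viewing this as a consequence of Gu\'eritaud's construction (\Cref{Thm:Gueritaud}) --- the six $q'$--geodesic representatives are saddle connections spanning singularity-free rectangles, and the four punctures they join then span a maximal singularity-free rectangle of $q'$ (maximality is automatic once each side of the rectangle carries a puncture in its relative interior, since any enlargement would swallow that puncture). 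So the per-tetrahedron openness argument you add is not needed in the paper's route.

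Your substitute for that last step is in the spirit of the referee's remark following \Cref{Cor:ComplexEmbeds} (everything can be phrased as finitely many open conditions in polygon/period coordinates), and it could likely be made rigorous, but as written it has a genuine soft spot. The justification that ``the set of potential intruders into any rectangle is a fixed discrete set that does not move with $q'$'' conflates the topologically fixed punctures with their positions in the singular flat metric of $q'$: the flat structure, the natural coordinates, and hence both the candidate rectangle and the punctures' locations relative to it all vary with $q'$. Openness of ``singularity-free with one puncture in the interior of each side'' therefore has to be extracted from genuine control of the flat structures near $q$ (period coordinates, or the Teichm\"uller/quasiconformal comparison of the kind used in \Cref{lem:DrillQuadDiff}), and that is precisely the part your sketch defers as ``the hardest part.'' In short: the reduction to \Cref{lem:edges} is right and matches the paper, but your second stage is either unnecessary (if you use, as the paper does, that a veering tetrahedron is determined by the presence of its six edges) or, if pursued on its own terms, still needs the continuity argument to be carried out honestly rather than by appeal to a ``fixed'' set of punctures.
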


\begin{proof}
Define
\[
L = \max \left\{ \ell_q^1(a) : a \in K^{(1)} \right\}.
\]
By \Cref{lem:edges}, there is a neighborhood $U$ such that every arc $\sigma \in \A(\tri_q)$ with $\ell^1_q(\sigma) \leq L$ also belongs to $\A(\tri_{q'})$ for $q' \in U \cap \e\QD_p(\Scirc)$. In particular, every arc in the $1$--skeleton of $K$ belongs to $\A(\tri_{q'})$. Since the edges of every tetrahedron $\tet \subset K$ belong to $\A(\tri_{q'})$, we have $\tet \subset \tri_{q'}$.
\end{proof}

\begin{remark}
The referee pointed out an alternative line of argument for \Cref{Cor:ComplexEmbeds}. Quadratic differentials near $q$ can be locally parametrized using the shapes of polygons that define the underlying translation surface. 
In these coordinates, the property $\sigma \in \A(\tri_{q'})$ can be characterized by finitely many linear inequalities, a property that persists on an open set. This is particularly easy to see using a proposition of Frankel \cite[Proposition 3.16]{Frankel:Comparison}, who proved that  a triangulation by saddle connections is veering if and only if every triangle has edges of both positive and negative slope. 
 \end{remark}

\section{Convergence of tetrahedron shapes}
\label{sec:elc}


In this section, we prove \Cref{thm:convergence}. To do so, we establish a technical result
 (\Cref{Prop:TetraShapesConverge}) which roughly states that as quadratic differentials converge, so do the hyperbolic shapes of the associated veering tetrahedra. This result will also be used in the proof of \Cref{thm:counting} in \Cref{sec:counting}.
We begin by reviewing some needed background about doubly degenerate representations of surface groups.


For a surface $S$, let $\AH(S)$ denote the space of conjugacy classes 
of discrete and faithful representations $\rho \from \pi_1(S,x_0) \to \mr{PSL}_2(\CC)$ such that peripheral elements map to parabolic isometries. In the algebraic topology on $\AH(S)$, conjugacy classes $[\rho_n]$ converge to $[\rho]$ if and only if there are conjugacy representatives $\rho_n \from \pi_1(S) \to \mr{PSL}(2,\CC)$ such that for every element $\gamma \in \pi_1(S)$, the images $\rho_n(\gamma)$ converge to $\rho(\gamma)$.

Setting $\Gamma_\rho=\rho(\pi_1(S,x_0))$, the manifold $N_\rho=\HH^3/\Gamma_\rho$ is then homeomorphic to $S\times \RR$ by work of Bonahon \cite{Bon86}. 
The \define{ends} of $N_\rho$ are the two components of $S\times (\RR \setminus I)$, where $I$ is an arbitrary compact interval.
The limit set $\Lambda_\rho$ is defined to be the smallest nonempty closed $\Gamma_\rho$--invariant set in $\bdy\HH^3$. 
The space $\DD(S)\subset \AH(S)$ of \define{doubly degenerate representations} of $\pi_1(S,x_0)$ is the subspace of $\AH(S)$ consisting of conjugacy classes
$[\rho]$
 such that $\Lambda_\rho=\del \HH^3$ and such that $\rho(\gamma)$ is parabolic if and only if $\gamma \in \pi_1(S,x_0)$ is peripheral. For such a $\rho$, the manifold $N_\rho$ has \define{geometrically infinite} ends, 
 which can be characterized by saying that for each end, there is a sequence of closed geodesics  in $N_\rho$ that exits that end.

By work of Bonahon and Thurston \cite{Bon86,Thu78}, there are unique, distinct \define{end invariants} $\fol^+_\rho, \fol^-_\rho \in \el(S)$ associated to the positive and negative ends of $N_\rho$, such that if $\{ \alpha_i \}$ is a bi-infinite sequence of closed geodesics exiting both ends, then $ \alpha_i \to \fol^-$ as $i \to -\infty$ and $\alpha_i\to \fol^+$ as $i \to + \infty$.
(Here, as in \Cref{sec:background:fols_and_lams}, we pass freely between foliations and laminations.)
Hence we get a well-defined function $\mathcal{E} \from \DD(S)\to \el(S)\times \el(S) - \diag$, where $\diag$ is the diagonal.

Thurston conjectured that $\mathcal{E}$ is a bijection. This conjecture was proved by Minsky \cite{Min10} and Brock--Canary--Minsky \cite{BCM12}. Subsequently, Leininger and Schleimer observed that $\mathcal{E}$ is actually a homeomorphism \cite[Theorem 6.5]{LS09}.


\begin{theorem}[Ending Lamination Theorem, parametrized]\label{Thm:HomeoELC}
The end invariant function $\mathcal{E} \colon \dd(S) \to \el(S) \times \el(S) - \diag$ sending $\rho$ to the pair $(\fol_\rho^-,\fol_\rho^+)$ is a homeomorphism.
\end{theorem}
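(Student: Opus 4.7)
The plan is to organize the proof as a bijection-plus-continuity argument, relying on heavy external input for bijectivity and a compactness/injectivity package for the topological statement.

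First, bijectivity of $\mathcal{E}$ is the content of the Ending Lamination Theorem itself: Minsky's work \cite{Min10} on the upper bound and Brock--Canary--Minsky \cite{BCM12} on the bilipschitz model together assert that any two representations in $\dd(S)$ with the same ordered pair of end invariants are conjugate, which gives injectivity, while Thurston's construction of doubly degenerate groups with prescribed end invariants (or equivalently, realizability results in \cite{BCM12}) gives surjectivity. So I would simply quote these as the source of bijectivity and treat the remaining content as being about continuity in both directions.

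Next, for continuity of $\mathcal{E}$, I would use the dynamical characterization of end invariants stated before the theorem: $\fol^\pm_\rho$ is the $\PML$--limit of any sequence of closed geodesics in $N_\rho$ exiting the respective end. Suppose $[\rho_n] \to [\rho]$ algebraically. Fix a sequence $\alpha_i^\pm$ of simple closed curves on $S$ whose $\rho$--geodesic representatives exit the $\pm$ ends and converge projectively to $\fol^\pm_\rho$. Algebraic convergence implies that the translation lengths $\ell_{\rho_n}(\alpha_i^\pm)$ converge to $\ell_\rho(\alpha_i^\pm)$ for each fixed $i$. Combined with the Bonahon--Thurston description of how curves leave the ends, a diagonal argument produces, for each $n$, a curve $\beta_n^\pm$ whose projective class lies arbitrarily close to $\fol^\pm_{\rho_n}$ on the one hand and to $\fol^\pm_\rho$ on the other. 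Passing to the quotient $\el(S)$ of $\PML(S)$ (where the topology is the quotient of the measure-forgetting map on the filling part, as in \cite{LS09}) then gives $\mathcal{E}(\rho_n) \to \mathcal{E}(\rho)$.

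For continuity of $\mathcal{E}^{-1}$, I would argue by compactness and injectivity. Suppose $(\fol_n^-,\fol_n^+) \to (\fol^-,\fol^+)$ in $\el(S)\times\el(S) - \diag$, and set $[\rho_n] = \mathcal{E}^{-1}(\fol_n^-,\fol_n^+)$ and $[\rho] = \mathcal{E}^{-1}(\fol^-,\fol^+)$. I would like to take an algebraic limit of a subsequence of $[\rho_n]$ in $\AH(S)$. Thurston/Jørgensen compactness guarantees that any subsequence has a further algebraically convergent subsequence provided it does not escape to infinity, and the standard way to rule out escape in this setting is to use that the $\fol_n^\pm$ stay in a compact neighborhood of $(\fol^-,\fol^+) \in \el(S)\times\el(S)-\diag$, so they remain a positive distance from the diagonal; this rules out pinching phenomena and accidental parabolics in any geometric/algebraic limit, keeping the limit in $\dd(S)$. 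Given any algebraic limit $[\rho_\infty]$ of a subsequence, continuity of $\mathcal{E}$ (just established) forces $\mathcal{E}(\rho_\infty) = (\fol^-,\fol^+)$, and then injectivity forces $[\rho_\infty] = [\rho]$. Since every subsequence has a further subsequence converging to the same limit $[\rho]$, the original sequence converges to $[\rho]$, establishing continuity of $\mathcal{E}^{-1}$.

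The main obstacle will be the compactness/no-escape step in the third paragraph, because a priori an algebraic limit of elements of $\dd(S)$ could either fail to exist or escape $\dd(S)$ by developing new parabolics or by one end failing to remain degenerate. Controlling this requires knowing that the pair $(\fol^-,\fol^+)$ being a filling, non-diagonal pair of ending laminations prevents any such degeneration in the limit; I expect to import this from the continuity statement of Leininger--Schleimer \cite[Theorem 6.5]{LS09}, or from the combination of Thurston's double limit theorem with the characterization of $\dd(S)$ via end invariants. Once that compactness/no-escape input is in hand, the proof is a clean three-step argument: bijectivity from the Ending Lamination Theorem, continuity of $\mathcal{E}$ from short-curve dynamics, continuity of $\mathcal{E}^{-1}$ from compactness combined with injectivity.
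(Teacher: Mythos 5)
First, note what the paper actually does with this statement: it is not proved there at all. Bijectivity is quoted from Minsky \cite{Min10} and Brock--Canary--Minsky \cite{BCM12}, and the upgrade to a homeomorphism is quoted from Leininger--Schleimer \cite[Theorem 6.5]{LS09}; the paper only adds the remark (after \Cref{Prop:TetraShapesConverge}) that the continuity part of this theorem uses Canary's theorem \cite{Can96} that algebraic and geometric limits agree for such sequences. So your first paragraph (bijectivity) matches the paper's sourcing exactly. The problem is with the two continuity steps, where your proposal has a genuine gap rather than a different route.

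For continuity of $\mathcal{E}^{-1}$, you explicitly say you expect to import the compactness/no-escape step ``from the continuity statement of Leininger--Schleimer [Theorem 6.5]'' --- but that theorem \emph{is} the statement you are proving, so this is circular. The fallback you mention (double limit theorem plus the characterization of $\dd(S)$) is not worked out and is where all the difficulty lives: convergence in $\el(S)\times\el(S) - \diag$ is convergence in a quotient topology that forgets measures, and ``staying a positive distance from the diagonal'' does not by itself rule out the algebraic limit acquiring accidental parabolics or a geometrically finite end; to run your compactness-plus-injectivity scheme you would need to (a) choose measured representatives and verify a binding condition to get an algebraically convergent subsequence, (b) prove the limit lies in $\dd(S)$, and (c) identify its end invariants as $(\fol^-,\fol^+)$ --- none of which is supplied. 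The forward continuity argument has a related gap: algebraic convergence $[\rho_n]\to[\rho]$ controls translation lengths of finitely many fixed curves, but the ending lamination is an asymptotic invariant of the end, and your ``diagonal argument'' producing curves $\beta_n^{\pm}$ simultaneously close to $\fol^{\pm}_{\rho_n}$ and to $\fol^{\pm}_{\rho}$ is precisely the content of continuity, not a proof of it; the standard way to make it work requires knowing that the geometric limit agrees with the algebraic limit (Canary \cite{Can96}), exactly as the paper's remark indicates. In short: bijectivity is correctly outsourced, but both continuity directions either beg the question or silently assume the geometric-limit control that constitutes the actual proof in \cite{LS09}.
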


We now specialize to the case of the punctured surface $\Scirc$. 
%
 Recall from \Cref{sec:convergence} that
 $\e\QD_p(\Scirc)$ is the subspace of $\QD(\Scirc)$ consisting of quadratic differentials whose foliations have singularities only at punctures and which have no horizontal or vertical saddle connections. As described in \Cref{Sec:VeeringBackground}, every quadratic differential $q \in \e\QD_p(\Scirc)$ has an associated veering triangulation $\tri_q$ of $\Scirc \times \RR$. 

Every $q \in \e\QD_p(\Scirc)$ defines a doubly degenerate hyperbolic structure on $\Scirc \times \RR$,  constructed as follows. By \Cref{lem:fill}, there is a map  $\F \colon \e\QD_p(\Scirc) \to \el(\Scirc) \times \el(\Scirc) - \diag$ sending $q$ to the pair $(\fol^+_q, \fol^-_q)$ of filling foliations/laminations. 
By \Cref{Thm:HomeoELC},  $\mathcal{E}^{-1}(\fol^+_q, \fol^-_q)$ is a doubly degenerate representation $\rho_q \from \pi_1(\Scirc) \to \mathrm{PSL}(2,\CC)$, unique up to conjugation. Then $N_q = \HH^3 / \rho(\pi_1 \Scirc)$ is a marked hyperbolic $3$--manifold triangulated via $\tau_q$. In summary,  the composition
\[
\e^{-1} \circ \F \: \colon \: \e\QD_p(\Scirc) \to \dd(\Scirc)
\]
maps $q$ to a conjugacy class of doubly degenerate representations, which we denote by $[\rho_q]$. This map $\e^{-1} \circ \F$ is $\Mod(S)$--equivariant by construction, and also continuous. Recall that $\e^{-1}$ is continuous  by \Cref{Thm:HomeoELC} and $\F$ is continuous by \cite{HM79}.

We remark that the end invariants $(\fol^+_q,\fol^-_q)$ of $N_q$ can be recovered directly from the edge set of $\tau_q$. By a theorem of Minsky and Taylor \cite[Theorem 1.4]{MT17}, the edge set $\A(\tau_q)$ is totally geodesic in $\AC(\Scirc)$. Furthermore, this edge set is quasi-isometric to a line, which has two endpoints at infinity. Any sequence of edges of $\A(\tau_q)$ whose slope in $q$ approaches $\infty$ will exit the positive end of $N_q$ and limit to $\fol^+_q$, while any sequence in $\A(\tau_q)$ whose slope in $q$ approaches $0$ will exit the negative end of $N_q$ and limit to $\fol^-_q$.

With this background, we can state and prove the main result of this section.

\begin{proposition} \label{Prop:TetraShapesConverge}
Fix $q \in \e\QD_p(\Scirc)$ and a finite, connected sub-complex $K \subset \tri_q$. 
Then the following holds for any convergent sequence $q_n \to q$, where $q_n \in  \e\QD_p(\Scirc)$:
\begin{itemize}
\item For all $n \gg 0$, $K$ is a sub-complex of  the veering triangulation $\tri_{q_n}$.
\item For every tetrahedron $\tet \subset K$, the shape of $\tet$ in $N_{q_n}$ converges to the shape of $\tet$ in $N_{q}$ as $n \to \infty$.
\end{itemize}
In particular, if $\tri_q$ is not geometric, then neither is $\tri_{q_n}$, for $n$ sufficiently large.
\end{proposition}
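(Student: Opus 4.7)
The first bullet of the proposition is an immediate application of \Cref{Cor:ComplexEmbeds}: it produces an open neighborhood $U$ of $q$ in $\QD(\Scirc)$ with $K \subset \tri_{q'}$ for every $q' \in U \cap \e\QD_p(\Scirc)$, so the convergence $q_n \to q$ inside $\e\QD_p(\Scirc)$ forces $K \subset \tri_{q_n}$ for all $n \gg 0$. The substantive work is the second bullet, the convergence of tetrahedron shapes, for which my plan is to convert convergence of quadratic differentials into algebraic convergence of surface-group representations and then read off convergence of ideal vertices as continuously varying parabolic fixed points. By the continuity of $\e^{-1} \circ \F \from \e\QD_p(\Scirc) \to \dd(\Scirc)$ recorded just before the proposition, the conjugacy classes $[\rho_{q_n}]$ converge to $[\rho_q]$ in $\dd(\Scirc)$; passing to conjugate representatives, I may choose $\rho_n$ so that $\rho_n(\gamma) \to \rho_q(\gamma)$ in $\mathrm{PSL}_2(\CC)$ for every $\gamma \in \pi_1(\Scirc)$.

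Now fix a tetrahedron $\tet \subset K$ and a lift $\widetilde{\tet} \subset \HH^3$ determined by the marking of $\Scirc \times \RR$. The four ideal vertices $v_1,\ldots,v_4 \in \partial \HH^3$ of $\widetilde{\tet}$, computed with respect to $\rho_q$, are the parabolic fixed points of four specific elements $g_1,\ldots,g_4 \in \pi_1(\Scirc)$, each conjugate into the peripheral subgroup of the cusp that sits at the corresponding vertex of $\tet$. Since $[\rho_n] \in \dd(\Scirc)$, each $\rho_n(g_i)$ is again parabolic; and the algebraic convergence $\rho_n(g_i) \to \rho_q(g_i)$ in $\mathrm{PSL}_2(\CC)$, combined with the continuous dependence of the unique fixed point of a parabolic M\"obius transformation on its matrix, yields $v_i^{(n)} \to v_i$ in $\widehat{\CC} = \partial \HH^3$, where $v_i^{(n)}$ denotes the parabolic fixed point of $\rho_n(g_i)$. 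Because $\rho_q$ is discrete and faithful, the four limit points $v_i$ are distinct, so the cross-ratio is continuous near $(v_1,\ldots,v_4)$, and $z_\tet^{(n)} = [v_1^{(n)}, v_2^{(n)}, v_3^{(n)}, v_4^{(n)}] \to [v_1, v_2, v_3, v_4] = z_\tet$, which is the desired convergence of shapes.

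The ``in particular'' clause then follows by applying the above to $K = \{\tet\}$ for some $\tet \subset \tri_q$ with $\mathrm{Im}(z_\tet) < 0$, which exists whenever $\tri_q$ is non-geometric (as in all known and relevant examples): convergence of shapes then forces $\mathrm{Im}(z_\tet^{(n)}) < 0$ for $n \gg 0$, so $\tri_{q_n}$ is also non-geometric. The main potential pitfall is purely bookkeeping, namely making precise what ``the tetrahedron $\tet$'' means across the varying triangulations $\tri_{q_n}$ and representations $\rho_n$; this is resolved cleanly by fixing the marking of $\Scirc \times \RR$ and a single lift $\widetilde{\tet} \subset \HH^3$ once and for all, after which every geometric quantity is determined by the image under the algebraically convergent sequence $\rho_n$ of a fixed element of $\pi_1(\Scirc)$. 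Apart from this, the argument is just a chain of standard continuity statements (for $\e^{-1} \circ \F$, for fixed points of parabolics, and for the cross-ratio), and I anticipate no further technical obstacle.
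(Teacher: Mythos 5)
Your argument is correct and essentially identical to the paper's: both use \Cref{Cor:ComplexEmbeds} to embed $K$ in $\tri_{q_n}$ for $n \gg 0$, the parametrized Ending Lamination Theorem (continuity of $\e^{-1}\circ\F$) to get algebraic convergence $\rho_{q_n}(\gamma)\to\rho_q(\gamma)$ after choosing conjugacy representatives, and then convergence of the parabolic fixed points of peripheral elements to conclude convergence of the cross-ratio shape of each $\tet \subset K$. The only difference is presentational: where you pin down the correspondence between ideal vertices and group elements by fixing the marking and a lift $\widetilde\tet \subset \HH^3$, the paper makes this bookkeeping explicit by choosing a maximal tree in the dual $1$--skeleton of $K$ and constructing finitely many peripheral loops $\gamma_1,\dots,\gamma_k$ based at a common barycenter, whose images under $\rho_{q_n}$ converge.
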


\begin{proof}
Fix $q \in \e\QD_p(\Scirc)$ and a finite, connected sub-complex $K \subset \tri_q$. We may assume without loss of generality that the dual $1$--skeleton of $K$ is connected (otherwise, add some number of tetrahedra). Let $Y$ be a maximal tree in the dual $1$--skeleton. Fix a base vertex $v_0 \in Y$, which corresponds to the barycenter of an oriented tetrahedron $\tet_0 \subset K$.

By construction, every vertex $v \in Y$ is a barycenter of some tetrahedron $\tet \subset K$, which has $4$ ideal vertices at punctures of $\Scirc$.
We use this structure to construct finitely many group elements in $\pi_1(\Scirc \times \RR, v_0)$.  For every vertex $v \in Y$, follow the unique path in $Y$ from $v_0$ to $v$. Walk from $v$ to the neighborhood of an ideal vertex of the ambient tetrahedron $\tet$, walk around the corresponding puncture of $\Scirc$, and then return to $v$ and back to $v_0$. 

This construction gives a collection of loops $\gamma_1, \ldots, \gamma_k$, where $k = 4 V(Y)$. Not all of these loops are homotopically distinct, but all of them are peripheral in $\Scirc$.
%

Now, let $[\rho_q]$ be the conjugacy class of doubly degenerate representations corresponding to $q$. For every representation in this conjugacy class, the image of each peripheral element $\gamma_i$ must be parabolic, with a single fixed point on $\bdy \HH^3$. 

Next, consider a convergent sequence $q_n \to q$, where $q_n \in  \e\QD_p(\Scirc)$. By \Cref{Thm:HomeoELC}, there is a convergent sequence
\[
 \mathcal{E}^{-1} ( \mathcal{F}(q_n)) = [\rho_{q_n}] \to [\rho_q].
\]
After choosing a representative $\rho_q \in [\rho_q]$, this means there are choices of representatives $\rho_{q_n} \in [\rho_{q_n}]$ such that $\rho_{q_n}(\gamma_i) \to \rho_q(\gamma_i)$ for $1 \leq i \leq k$. 


Let $\tet \subset K$ be a tetrahedron, with ideal vertices $x_1, \ldots, x_4$. By the above construction, every $x_j$ corresponds to a peripheral group element $\gamma_{i_j}$ in the chosen collection. Let $p_j^n \in \bdy \HH^3$ be the parabolic fixed point of $\rho_{q_n}(\gamma_{i_j})$, and let $p_j$ be the parabolic fixed point of $\rho_q(\gamma_{i_j})$. Since $\rho_{q_n}(\gamma_i) \to \rho(\gamma_i)$, we also have convergence of the parabolic fixed points: $p_j^n \to p_j$ as $n \to \infty$.

For every $q_n$, let $\tri_{q_n}$ be the veering triangulation of $N_{q_n} \cong \Scirc \times \RR$. Since $q_n \to q$, \Cref{Cor:ComplexEmbeds} implies that $K$ embeds into $\tri_{q_n}$ for all $n \gg 0$.  (In fact, there is only one embedding consistent with the marking of $N_{q_n}$.) The shape parameter of $\tet$ in the hyperbolic metric on $N_{q_n}$ is the cross-ratio $[p_1^n, p_2^n, p_3^n, p_4^n]$. As $n \to \infty$, these cross-ratios converge to $[p_1, p_2, p_3, p_4]$, hence the shape of $\tet$ converges as well.
\end{proof}

\begin{remark}
\Cref{Prop:TetraShapesConverge} gives a concrete way to see that, with suitably chosen basepoints, the manifolds $N_{q_n}$ converge geometrically to $N_q$. Let $z \in N_q$ be an arbitrary basepoint, and let $B_R(z) \subset N_q$ be a metric $R$--ball about $z$. Since the edges of $\tau_q$ eventually exit the ends of $N_q$, there are only finitely many edges (hence finitely many tetrahedra) in $\tau_q$ that intersect $B_R(z)$. 
By \Cref{Prop:TetraShapesConverge}, the shapes of these tetrahedra in $N_{q_n}$ converge to the shape in $N_q$, hence for $n \gg 0$, there is a metric ball in $N_{q_n}$ almost-isometric to $B_R(z)$.

The statement that the algebraic and geometric limits of $N_{q_n}$ agree is due to Canary \cite{Can96}, and is used in the proof of continuity in \Cref{Thm:HomeoELC}. Hence this remark does not give a new proof of geometric convergence.
\end{remark}

We can now complete the proof of \Cref{thm:convergence}.

\begin{proof}[Proof of \Cref{thm:convergence}]

Let $S$ be a hyperbolic surface, and let $\varphi \in \Mod(S)$ be a principal pseudo-Anosov.
Let $\mu$ be a probability distribution on $\Mod(S)$ with finite first moment, such that $\semigroup{\mu}$ is non-elementary and contains $\varphi$.
According to \Cref{cor:q-conv}, for almost every sample path $\omega=(\omega_n)$ of the random walk on $\Mod(S)$ we have for $n \gg 0$,
\begin{itemize}
\item[(1)] $\omega_n$ is a pseudo-Anosov with Teichm\"uller geodesic $\gamma_{\omega_n}$ in the principal stratum,
\item[(2)] $h_n q_{\omega_n}\to q_\varphi$ in $\GQD(S)$ for some $h_n\in\Mod(S)$ and for some quadratic differentials $q_{\omega_n}$ along $\gamma_{\omega_n}$ and $q_\varphi$ along $\gamma_\varphi$.	
\end{itemize}

\noindent Since $h_n q_{\omega_n}=q_{h_n \omega_n h_n^{-1}}$, and $\omega_n$ defines the same unmarked mapping torus as $h_n \omega_n h_n^{-1}$, the veering triangulations associated to the $\omega_n$ are simplicially isomorphic to those associated to $h_n \omega_n h_n^{-1}$. Let $q_n=h_n q_{\omega_n}$.

By \Cref{lem:DrillQuadDiff}, for sufficiently large $n$, we can pass from the sequence $q_n\to q_\varphi$ to a sequence $\mathring{q}_n\to \mathring{q}_\varphi$ in $\QD_p(\Scirc)$, where $\Scirc$ is the surface obtained by puncturing $S$ at the singularities of $q_\varphi$. By construction, $\mathring{q}_\varphi$ is a quadratic differential along the Teichm\"uller axis of $\phicirc \in \Mod(\Scirc)$, and similarly for the $\mathring{q}_n$. Thus, in fact, $\qcirc_n \to \qcirc_\varphi \in \e\QD_p(\Scirc)$.

 Let $\tri_n = \tri_{q_n}$ be the veering triangulation of $N_{\mathring{q}_n} \cong \Scirc \times \RR$ associated to the quadratic differential $\mathring{q}_n$, and let $ \tri_{q_\varphi}$ be the veering triangulation of  $N_{\mathring{q}_\varphi}$ associated to $\mathring{q}_\varphi$. Now let $K \subset  \tri_{q_\varphi}$ be any finite connected subcomplex as in the statement of the theorem.
 Applying \Cref{Prop:TetraShapesConverge}, we conclude that for $n$ sufficiently large, $K$ is a subcomplex of  the veering triangulation $\tri_{q_n}$ and that for every tetrahedron $\tet \subset K$, the shape of $\tet$ in $N_{\mathring{q}_n}$ converges to the shape of $\tet$ in $N_{\mathring{q}_\varphi}$ as $n \to \infty$. 

Hence, it only remains to show that $K$ embeds as a subcomplex of $\tau_\varphi$, the veering triangulation of the mapping torus $M_{\mathring{\omega}_n}$. That is, we must show that the covering map $N_{\mathring{q}_n} \to M_{\mathring{\omega}_n}$ is injective on $K$, once $n$ is sufficiently large. For this, we use a result of Maher \cite{Mah10}, which implies that the Teichm\"uller translation length of $\omega_n$ grows linearly in $n$. Since dilatation, and hence Teichm\"uller translation length, is unchanged after puncturing along singularities, we also have that the translation length of $\mathring{\omega}_n$ grows linearly in $n$. If $N_{\mathring{q}_n} \to M_{\mathring{\omega}_n}$ fails to be injective on $K$ then there are edges $k_1$ and $k_2$ of $K$ which, when viewed as arcs of $\Scirc$, satisfy $(\mathring{\omega}_n)^i k_1 = k_2$ for some $i > 0$. Since these arcs represent saddle connections of $\mathring{q}_n$, this implies that the stretch factor of $\mathring{\omega}_n$ is no more than the quantity
\[
\frac{\max_{k \in K^{(1)}} v_{q_n}(k)}{\min_{k\in K^{(1)}} v_{q_n}(k)},
\qquad
\text{which converges to}
\qquad
\frac{\max_{k \in K^{(1)}} v_{q_\varphi}(k)}{\min_{k\in K^{(1)}} v_{q_\varphi}(k)}
\]
as $n \to \infty$. Since this implies that the stretch factors of $\omega_n$ are eventually bounded, we obtain a contradiction and the proof is complete. \qedhere


\end{proof}

\begin{remark}
\Cref{thm:convergence} also holds for $S \cong \Sigma_{1,1}$, without the hypothesis that $\varphi$ is principal. Recall that the principal stratum of $\QD(\Sigma_{1,1})$ is empty. In this setting, \Cref{cor:q-conv} holds for every pseudo-Anosov $\varphi$. (See \Cref{Rem:q-conv-ptorus}.) There are no interior singularities, so $S = \Scirc$ and \Cref{sec:transition} is not needed. Now, the rest of the proof of \Cref{thm:convergence} using  \Cref{Prop:TetraShapesConverge} applies verbatim.
\end{remark}

\section{Counting non-geometric veering triangulations}\label{sec:counting}

In this section, we prove \Cref{thm:counting}, showing that geometric veering triangulations are atypical from the point of view of counting closed geodesics in moduli space. The proof of this result uses many of the same ingredients as the proof of \Cref{thm:convergence}. The main difference is that the appeal to Gadre and Maher's \Cref{thm:GM} will be replaced with results from Hamenst\"adt \cite{hamenstadt2013bowen} and Eskin--Mirzakhani \cite{eskin2011counting}. 

Fix a surface $S$ such that $\xi(S) \geq 1$. As in the introduction, let $\mathcal{G}(L)$ denote the set of conjugacy classes of pseudo-Anosov mapping classes in $\Mod(S)$ whose Teichm\"uller translation length is no more than $L \geq 0$. Since the veering triangulation $\tri_\varphi$ depends only on the conjugacy class of the pseudo-Anosov, each $[\varphi] \in \mathcal{G}(L)$ uniquely determines a veering triangulation of $\mathring M_\varphi$. As in Baik--Gekhtman--Hamenst\"adt \cite{baik2016smallest},  say that a \define{typical pseudo-Anosov conjugacy class} in $\Mod(S)$ has a property $\mathcal{P}$ if 
\[
\lim_{L \to \infty} \frac{|\left \{[\varphi] \in \mathcal{G}(L) \: : \: \varphi \text{ has } \mathcal{P} \right \}|}{|\mathcal{G}(L)|} = 1.
\]
In this terminology, \Cref{thm:counting} is implied by the following, slightly stronger statement.

\begin{theorem} \label{th:counting}
Let $S$ be a surface with $\xi(S) \ge 2$. Then a typical pseudo-Anosov conjugacy class  $[\varphi] \subset \Mod(S)$ is principal and defines a non-geometric veering triangulation $\tri_\varphi$.
\end{theorem}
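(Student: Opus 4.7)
The plan is to parallel the proof of \Cref{thm:genericity}, replacing the random-walk input (\Cref{thm:GM}) with equidistribution results for closed Teichm\"uller geodesics on moduli space. By \Cref{Prop:NongeomExists}, fix a principal pseudo-Anosov $\psi \in \Mod(S)$ whose veering triangulation $\tri_\psi$ contains a negatively oriented tetrahedron $\tet_0$, and let $q_\psi \in \GQD(S)$ be a quadratic differential along $\gamma_\psi$. It suffices to prove the following fellow-traveling statement: for any $\rho, D > 0$, a typical $[\varphi] \in \mathcal{G}(L)$ admits a translate $h \gamma_\varphi$ which $\rho$--fellow-travels $\gamma_\psi$ over distance at least $D$. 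Granting this, \Cref{lem:qd_converge} produces a sequence $h_n q_{\varphi_n} \to q_\psi$ in $\QD(S)$ for a typical subsequence with $L_n \to \infty$; openness of $\GQD(S)$ and \Cref{lem:DrillQuadDiff} lift this to a convergent sequence $\qcirc_n \to \qcirc_\psi$ in $\e\QD_p(\Scirc)$; then \Cref{Prop:TetraShapesConverge} places $\tet_0$ in $\tri_{\varphi_n}$ with converging shape parameter, so $\tet_0$ remains negatively oriented and $\tri_{\varphi_n}$ is not geometric for all $n$ sufficiently large. (The stretch-factor estimate at the end of the proof of \Cref{thm:convergence} ensures that $\tet_0$ descends injectively from the cyclic cover to the mapping torus $\mathring M_{\varphi_n}$.) The principal assertion of the theorem follows from the same convergence, since $\GQD(S)$ is open and contains $q_\psi$.

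For the fellow-traveling statement I plan to invoke Hamenst\"adt's equidistribution theorem \cite{hamenstadt2013bowen}, which builds on the counting asymptotics of Eskin--Mirzakhani \cite{eskin2011counting}: the counting measures supported on closed Teichm\"uller geodesics of length at most $L$ in $\QD^1(S)/\Mod(S)$ equidistribute, after normalization, to the Masur--Veech measure $\mu_{MV}$. Since $\mu_{MV}$ has full support on the principal stratum and gives positive mass to any nonempty open set there, I would choose a small thick-part flow-box neighborhood $U$ of a segment of length slightly larger than $D$ along the projection $[\gamma_\psi]$ centered at $[q_\psi]$. A typical $[\varphi] \in \mathcal{G}(L)$ then has its axis spending a definite fraction of its length inside $U$; the local-product structure of the Teichm\"uller flow and mixing force at least one such visit to be a transverse passage entering and exiting $U$ near the two ends of the underlying segment. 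Lifting to $\Teich(S)$ gives a translate $h \gamma_\varphi$ whose endpoints lie within bounded Teichm\"uller distance of two thick points on $\gamma_\psi$, and \Cref{thm:Rafi} converts this into $B$--fellow-traveling across the whole segment for some $B$ depending only on the thick-part constant and the bounded distance; choosing $U$ and its length appropriately gives $\rho$--fellow-traveling over distance at least $D$.

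The main technical obstacle is the transverse-passage step: upgrading the statistical statement that the axis spends positive density of its length in $U$ to the geometric statement that a specific lift fellow-travels $\gamma_\psi$, with uniform control as $D \to \infty$. Quantitative equidistribution of the Teichm\"uller flow on strata (for example via the exponential mixing results of Avila--Gou\"ezel--Yoccoz, or by applying Hamenst\"adt's theorem directly to flow boxes of shrinking transverse cross-section) should suffice, but carefully controlling constants as $D$ grows is where most of the real work lies. Once this step is in hand, the remainder of the argument is a direct application of the tools assembled in \Cref{sec:qd_convergence,sec:transition,sec:convergence,sec:elc}, and requires no new input specific to veering triangulations beyond what has already been proved.
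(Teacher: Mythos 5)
Your high-level strategy is the same as the paper's (replace the random-walk input by Hamenst\"adt--Eskin--Mirzakhani and feed the resulting fellow-traveling into \Cref{lem:qd_converge}, \Cref{lem:DrillQuadDiff} and \Cref{Prop:TetraShapesConverge}), but as structured your argument has two genuine gaps. First, the logical reduction: proving ``for every $\rho,D$ a typical class $\rho$--fellow-travels $\gamma_\psi$ for distance $D$'' and then passing to ``a typical subsequence with $L_n\to\infty$'' does not yield the theorem, because typicality is a statement about proportions in the changing finite sets $\mathcal G(L)$ and there is no single sequence $(\varphi_n)$ along which it can be read off. What you need is a \emph{uniform} threshold: a single $T=T(\rho,\psi)$ such that $\rho$--fellow-traveling for distance $2T$ already forces principality and non-geometricity. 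This is exactly \Cref{cor:open}, which the paper extracts from your same ingredients by a compactness/contradiction argument (if no $T$ worked, take $T_n\to\infty$ and geometric counterexamples, apply \Cref{lem:qd_converge} and \Cref{Prop:TetraShapesConverge} to contradict). You have the tools for this but never state the uniform version, and without it the reduction does not close.

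Second, your statistical step is both unproven and stronger than necessary. Weak convergence of the \emph{averaged} measures $\nu_L$ to the Masur--Veech measure does not imply that a typical individual closed orbit spends a definite fraction of its length in a flow box $U$, and the ``transverse passage'' upgrade you flag as the main technical obstacle (shrinking cross-sections, exponential mixing \`a la Avila--Gou\"ezel--Yoccoz, uniformity as $D\to\infty$) is not needed at all once the threshold $T$ is fixed. The paper only needs that a typical closed orbit \emph{meets} the fixed open set $V$ (the image of $\widetilde V(\gamma_g,\rho,T)$) at least once: letting $A$ be the union of closed orbits disjoint from $V$, flow-invariance of $\overline A$, disjointness from the open set $V$, and ergodicity of Masur--Veech give $\lambda(\overline A)=0$; Portmanteau gives $\limsup\nu_L(\overline A)=0$; and a short counting estimate, splitting off orbits of length at most $L-\sigma$ and using $|\mathcal G(L-\sigma)|/|\mathcal G(L)|\to e^{-h\sigma}$ from Eskin--Mirzakhani, converts this into the statement that the proportion of orbits missing $V$ tends to $0$. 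So the real work you anticipate (quantitative equidistribution with constants controlled as $D\to\infty$) is a detour: fix the threshold first via \Cref{cor:open}, and the measure theory becomes soft.
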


For the proof of \Cref{th:counting}, let $\pi \colon \QD^1(S) \to \T(S)$ be the projection map sending a unit area quadratic differential to its underlying Riemann surface. As in \Cref{Sec:Flow}, denote the Teichm\"uller geodesic flow by $\Phi^t \colon \QD^1(S) \to \QD^1(S)$. We  will use the same notation to denote the corresponding flow on $\MQD^1(S) = \QD^1(S) / \Mod(S)$, namely the moduli space of unit area quadratic differentials.

Let $g \in  \Mod(S)$ be a pseudo-Anosov with Teichm\"uller axis $\gamma_g$. For each $\rho, T > 0$ we define the following subset of $\QD^1(S)$:
\[
\widetilde V(\gamma_g, \rho, T) = \big\{ q\in \QD^1(S) \: : \: \pi \circ \Phi^t(q) \in N_\rho(\gamma_g), \:\forall t\in [-T,T]  \big\},
\]
where $N_\rho(\cdot)$ denotes an open $\rho$--neighborhood with respect to the Teichm\"uller metric.
Observe that $\widetilde V(\gamma_g, \rho, T)$ is nonempty and open.

Our proof of \Cref{thm:convergence} has the following corollary.

\begin{corollary} \label{cor:open}
Let $g \in \Mod(S)$ be a principal pseudo-Anosov with non-geometric veering triangulation. For every $\rho > 0$ there is a number $T = T(\rho, g) >0$, such that if $\varphi \in \Mod(S)$ is a pseudo-Anosov with an associated quadratic differential $q_\varphi$ and $\Phi^t(q_\varphi) \in \widetilde V(\gamma_g,\rho,T)$ for some $t \in \mathbb{R}$, then $\varphi$ is principal and the veering triangulation $\tri_\varphi$ is also non-geometric. 
\end{corollary}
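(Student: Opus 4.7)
Since $\tri_g$ is non-geometric, fix an oriented tetrahedron $\tet_0 \subset \tri_g$ whose shape parameter $z_{\tet_0}$ satisfies $\mathrm{Im}(z_{\tet_0}) < 0$. Pick a quadratic differential $q_g \in \GQD(S)$ along the axis $\gamma_g$ and, via \Cref{lem:DrillQuadDiff}, lift it to a quadratic differential $\mathring q_g \in \e\QD_p(\Scirc)$ on the punctured surface obtained by drilling at the singularities; this defines the veering triangulation $\tri_{\mathring q_g}$ of $\mathring N_g \cong \Scirc \times \R$. Let $\widetilde \tet_0$ be a single lift of $\tet_0$ to $\tri_{\mathring q_g}$, and set $K = \{\widetilde \tet_0\}$.

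The plan is to argue by contradiction. Suppose the conclusion fails for some $\rho > 0$. Then there exist $T_n \to \infty$, pseudo-Anosovs $\varphi_n \in \Mod(S)$, quadratic differentials $q_{\varphi_n}$ along $\gamma_{\varphi_n}$, and $t_n \in \R$ such that $\Phi^{t_n}(q_{\varphi_n}) \in \widetilde V(\gamma_g, \rho, T_n)$, yet either $\varphi_n$ is not principal or $\tri_{\varphi_n}$ is geometric. Replacing $q_{\varphi_n}$ by $\Phi^{t_n}(q_{\varphi_n})$---which is still a quadratic differential along the same Teichm\"uller axis $\gamma_{\varphi_n}$---we may assume $t_n = 0$. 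By the definition of $\widetilde V$, the geodesic $\gamma_{\varphi_n}$ is then a $\rho$--fellow traveler with $\gamma_g$ on a length-$2T_n$ segment centered at $\pi(q_{\varphi_n})$.

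Now I would apply \Cref{lem:qd_converge} to obtain mapping classes $h_n \in \Mod(S)$ (of the form $g^{k_n}$ after a preliminary translation) such that, after passing to a subsequence and relabeling, $h_n q_{\varphi_n} \to q$ in $\QD(S)$ for some $q$ along $\gamma_g$. Since $g$ is principal and $\GQD(S)$ is open, $h_n q_{\varphi_n} \in \GQD(S)$ for $n \gg 0$; hence $h_n \varphi_n h_n^{-1}$, and therefore its conjugate $\varphi_n$, is principal, ruling out the first alternative. Invoking \Cref{lem:DrillQuadDiff} once more, I pass to a convergent sequence $\mathring q_n \to \mathring q_g$ in $\e\QD_p(\Scirc)$, where each $\mathring q_n$ is the punctured-surface quadratic differential associated with $h_n \varphi_n h_n^{-1}$.

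Finally, applying \Cref{Prop:TetraShapesConverge} with $K = \{\widetilde \tet_0\}$, for $n \gg 0$ the tetrahedron $\widetilde \tet_0$ sits inside $\tri_{\mathring q_n}$ and its shape in $\mathring N_{h_n \varphi_n h_n^{-1}}$ converges to $z_{\tet_0}$, so has negative imaginary part for $n$ large. Under the local isometric covering $\mathring N_{h_n \varphi_n h_n^{-1}} \to \mathring M_{h_n \varphi_n h_n^{-1}}$, the tetrahedron $\widetilde \tet_0$ descends to a tetrahedron of $\tri_{h_n \varphi_n h_n^{-1}} \cong \tri_{\varphi_n}$ with the same negatively oriented shape, so $\tri_{\varphi_n}$ is non-geometric, contradicting the second alternative. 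The main obstacle is the bookkeeping needed to combine the fellow-traveling hypothesis with the puncturing step of \Cref{lem:DrillQuadDiff} and the conjugation by $h_n$; but this is a direct adaptation of the argument used at the end of the proof of \Cref{thm:convergence}, with the geometric fellow-traveling condition $\Phi^t(q_\varphi) \in \widetilde V(\gamma_g, \rho, T)$ playing the role that \Cref{cor:q-conv} plays there.
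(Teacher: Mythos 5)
Your proposal is correct and follows essentially the same route as the paper's proof: a contradiction argument with $T_n \to \infty$, then \Cref{lem:qd_converge} to produce converging quadratic differentials along $\gamma_g$, \Cref{lem:DrillQuadDiff} to pass to $\e\QD_p(\Scirc)$, and \Cref{Prop:TetraShapesConverge} applied to a negatively oriented tetrahedron, with the covering $N_{\mathring q_n} \to \mathring M_{\varphi_n}$ transporting the bad shape downstairs. The only small difference is that you deduce principality of $\varphi_n$ from convergence plus openness of $\GQD(S)$ along the contradiction sequence, while the paper gets it uniformly by taking $T$ larger than the constant $D_1(\rho,g)$ of \Cref{prop:GM}; both are valid.
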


\begin{proof}
Fix $\rho > 0$. Once $T$ is larger than the constant $D_1= D_1(\rho,g)$ given by \Cref{prop:GM}, every $q_\varphi \in \widetilde V(\gamma_g,\rho,T)$ must be principal.

Now, suppose for a contradiction that no $T > 0$ suffices for the other conclusion of the corollary. Then there is a sequence $T_n \to \infty$ and an associated sequence of principal pseudo-Anosovs $\varphi_n$, such that the invariant axis $\gamma_{\varphi_n}$ is a $\rho$--fellow traveler of $\gamma_g$ for distance $2T_n$, but the veering triangulation $\tri_{\varphi_n}$ is geometric. By \Cref{lem:qd_converge}, there is a choice of quadratic differentials $q_n$ associated to points along $\gamma_{\varphi_n}$, which converge to a quadratic differential $q$ associated to $\gamma_g$. By \Cref{lem:DrillQuadDiff}, for sufficiently large $n$, we can pass from the sequence $q_n \to q$ to a sequence $\mathring{q}_n\to \mathring{q}$ in $\e\QD_p(\Scirc)$, where $\Scirc$ is the surface obtained by puncturing $S$ at the singularities of $g$. 
By \Cref{Prop:TetraShapesConverge},  the veering triangulation $\tri_{q_n}$ covering $\tri_{\varphi_n}$ is non-geometric for $n$ sufficiently large. But this contradicts our assumption about $\varphi_n$. Thus some $T >0$ must suffice.
\end{proof}

We finish the proof of \Cref{th:counting} (hence also \Cref{thm:counting}) with the following argument, whose idea was suggested by I. Gekhtman.

\begin{proof}[Proof of \Cref{th:counting}]
We  identify a conjugacy class of pseudo-Anosovs on $S$ with the corresponding closed orbit of the Teichm\"uller flow  $\Phi^t \colon \MQD^1(S) \to \MQD^1(S)$. Following this identification, it makes sense to refer to typical closed orbits of the Teichm\"uller flow. 

Let $g \in \Mod(S)$ be a principal pseudo-Anosov whose associated veering triangulation is not geometric. (Such a mapping class exists by \Cref{Prop:NongeomExists}, which will be proved in \Cref{sec:poison,sec:norm}.) Fix $\rho =1$, and let $T=T(1,g)>0$
be given by \Cref{cor:open}.
Finally, let $V$  be the image of $\widetilde V(\gamma_g, \rho, T)$ in $\MQD^1(S)$. This set is also open and nonempty. We will show that a typical closed orbit of $\Phi^t \colon \MQD^1(S) \to \MQD^1(S)$ meets $V$.

Set $h = 2 \xi(S) = \mathrm{dim} \, \T(S)$.
For each closed orbit $\gamma$ of $\Phi^t$, let $\delta(\gamma)$ be the $\Phi^t$--invariant Lebesgue measure $\QD^1(S)$, supported on $\gamma$, of total mass $\ell(\gamma)$. Thus, for a Lebesgue measurable set $E \subset \QD^1(S)$, we have $\delta(\gamma)(E) = \ell(\gamma \cap E)$.
Hamenst\"adt \cite{hamenstadt2013bowen} proved that as $L \to \infty$, the measures
\[
he^{-hL} \sum_{\gamma \in \mathcal{G}(L)}\delta(\gamma)
\]
converge weakly to the Masur--Veech measure $\lambda$ on $\MQD^1(S)$. (See also \cite[Theorem 5.1]{Ham18}.) The probability measure $\lambda$ is in the Lebesgue measure class, has full support, and is ergodic for the Teichm\"uller flow $\Phi^t$ \cite{veech1986teichmuller, masur1982interval}.

Next, we recall the geodesic counting theorem of Eskin and Mirzakhani \cite{eskin2011counting}, which states that as $L \to \infty$,
\[
|\mathcal{G}(L)| \cdot hLe^{-hL} \to 1.
\]
Combining the above displayed equations, we have that the measures
\[
\nu_L = \frac{1}{L |\mathcal{G}(L)|} \sum_{\gamma \in \mathcal{G}(L)}\delta(\gamma)
\]
converge weakly to $\lambda$. This convergence is also noted in the proof of \cite[Proposition 5.1]{baik2016smallest}.


Let $A \subset \MQD^1(S)$ be the union of all closed orbits of the flow $\Phi^t$ that are disjoint from $V$. Then the closure $\overline{A}$ is flow-invariant and disjoint from $V$ because $V$ is open. By the ergodicity of $\lambda$, we 
 must have $\lambda(\overline{A}) =0$.
Since $\overline{A}$ is closed, weak convergence $\nu_L \to \lambda$ and the Portmanteau Theorem imply that $\limsup \nu_L(\overline A) \le \lambda(\overline{A}) = 0.$ 

Now, we wish to show that a typical closed orbit is not contained in $A$. To that end, fix $\epsilon >0$ and choose $\sigma >0$ so that $e^{-h\sigma} < \epsilon$.  In the following computation for fixed $L > \sigma$, the symbol $\gamma$ denotes both a pseudo-Anosov conjugacy class and the corresponding closed orbit in $\MQD^1(S)$. We have
\begin{align*}
|\{\gamma \in \mathcal{G}(L) : \gamma \cap V = \emptyset\}| 
&=  
|\{\gamma : \gamma \subset  A, \: \ell(\gamma) \leq L \}| \\
&= |\{\gamma  : \gamma \subset  A, \, \ell(\gamma) < L - \sigma  \}| + |\{\gamma  : \gamma \subset  A,  \:  L - \sigma \leq \ell(\gamma) \leq L \}| \\
& = |\{\gamma  : \gamma \subset  A, \, \ell(\gamma) < L - \sigma  \}| + \sum_{\begin{smallmatrix}  \gamma \subset A \\ L - \sigma \leq \ell(\gamma) \leq L \end{smallmatrix} } 1 \\
& \leq  |\mathcal{G}(L-\sigma)| \quad + \quad \sum_{\begin{smallmatrix}  \gamma \subset A \\ L - \sigma \leq \ell(\gamma) \leq L \end{smallmatrix} } \frac{\ell(\gamma)}{L - \sigma} \\
& \leq  |\mathcal{G}(L-\sigma)| \quad + \quad \sum_{  L - \sigma \leq \ell(\gamma) \leq L } \frac{\ell(\gamma \cap \overline A)}{L - \sigma} \\
& \leq  |\mathcal{G}(L-\sigma)|  \quad +  \:  \frac{1}{L- \sigma} \sum_{ \mathcal{G}(L) } \ell(\gamma \cap \overline A) \\
 &= |\mathcal{G}(L-\sigma)| \:+ \: \frac{L |\mathcal{G}(L)| }{L-\sigma} \,  \nu_L(\overline A) .
\end{align*} 
Dividing by $|\mathcal{G}(L)|$ and taking limits as $L \to \infty$, we obtain
\begin{align*}
\limsup_{L \to \infty}  \frac{|\{\gamma \in \mathcal{G}(L) : \gamma \cap V = \emptyset\}|}{|\mathcal{G}(L)|} 
&\leq  \limsup_{L \to \infty}  \left(  \frac{|\mathcal{G}(L-\sigma)|}{|\mathcal{G}(L)|  } \: + \: \frac{L}{L-\sigma} \, \nu_L(\overline A) \right) \\
&=  e^{-h\sigma} \:+\: \limsup_{L \to \infty}  \,  \nu_L(\overline A) \\
&<  \epsilon + 0.
\end{align*}
Since $\epsilon > 0$ was arbitrary, this shows that a typical closed orbit meets $V$. 

By the definition of $\widetilde V$ and $V$, this means that a typical pseudo-Anosov conjugacy class has a representative $\varphi$ with associated quadratic differential $q_\varphi$ such that $\Phi^t(q_\varphi) \in  \widetilde V(\gamma_g, \rho, T)$ for some $t \in \mathbb{R}$. By \Cref{cor:open}, this implies that a typical pseudo-Anosov conjugacy class $[\varphi]$ is principal and produces a non-geometric veering triangulation $\tri_\varphi$. 
\end{proof}


\section{A few non-geometric triangulations, via computer}
\label{sec:poison}


Our remaining goal in this paper is to prove \Cref{Prop:NongeomExists}: for every surface $S$ of complexity $\xi(S) \ge 2$, there exists a principal pseudo-Anosov map $\varphi$ so that the veering triangulation of $M_{\phicirc}$ is non-geometric. 
We will prove this result in two stages.

\begin{enumerate}
\item In this section, we use rigorous computer assistance to find a finite collection of non-geometric triangulations. See \Cref{prop:seeds}.
\item In \Cref{sec:norm}, we use Thurston norm methods to show that (finite covers of) the finitely many mapping tori described in \Cref{prop:seeds} account for all fibers of complexity at least $2$. This will prove  \Cref{Prop:NongeomExists}.
\end{enumerate}

\begin{figure}[h]
 	\centering
   	\includegraphics[scale=.37]{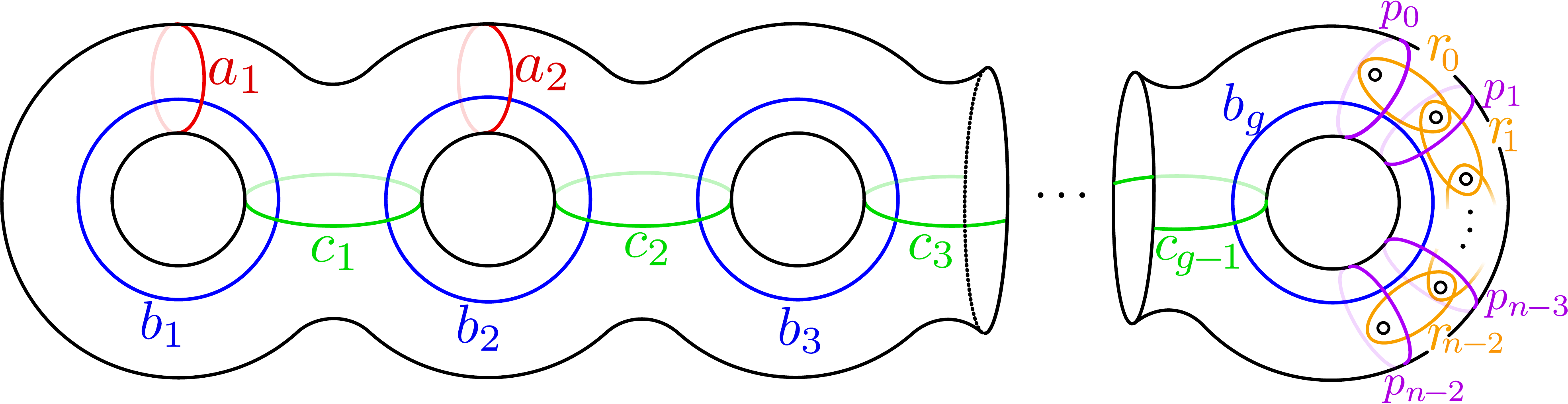}
   	\caption{The mapping class group $\Mod(\Sigma_{g,n})$, for genus $g>0$, is generated by half-twists about $r_0, \ldots, r_{n-2}$ and full twists about the following curves: $a_1,a_2$, $b_1, \ldots, b_g$, $c_1, \ldots, c_{g-1}$, and $p_0, \ldots, p_{n-2}$. When $n \leq 1$, there are no curves $p_i$ or $r_i$. See \cite[Figure 4.10]{FM12} and the subsequent discussion.}
 	\label{fig:surf_gens}
\end{figure}

\begin{figure}
   	\centering
   	\begin{subfigure}{0.4\textwidth}
   		\centering
      	\includegraphics[scale=.23]{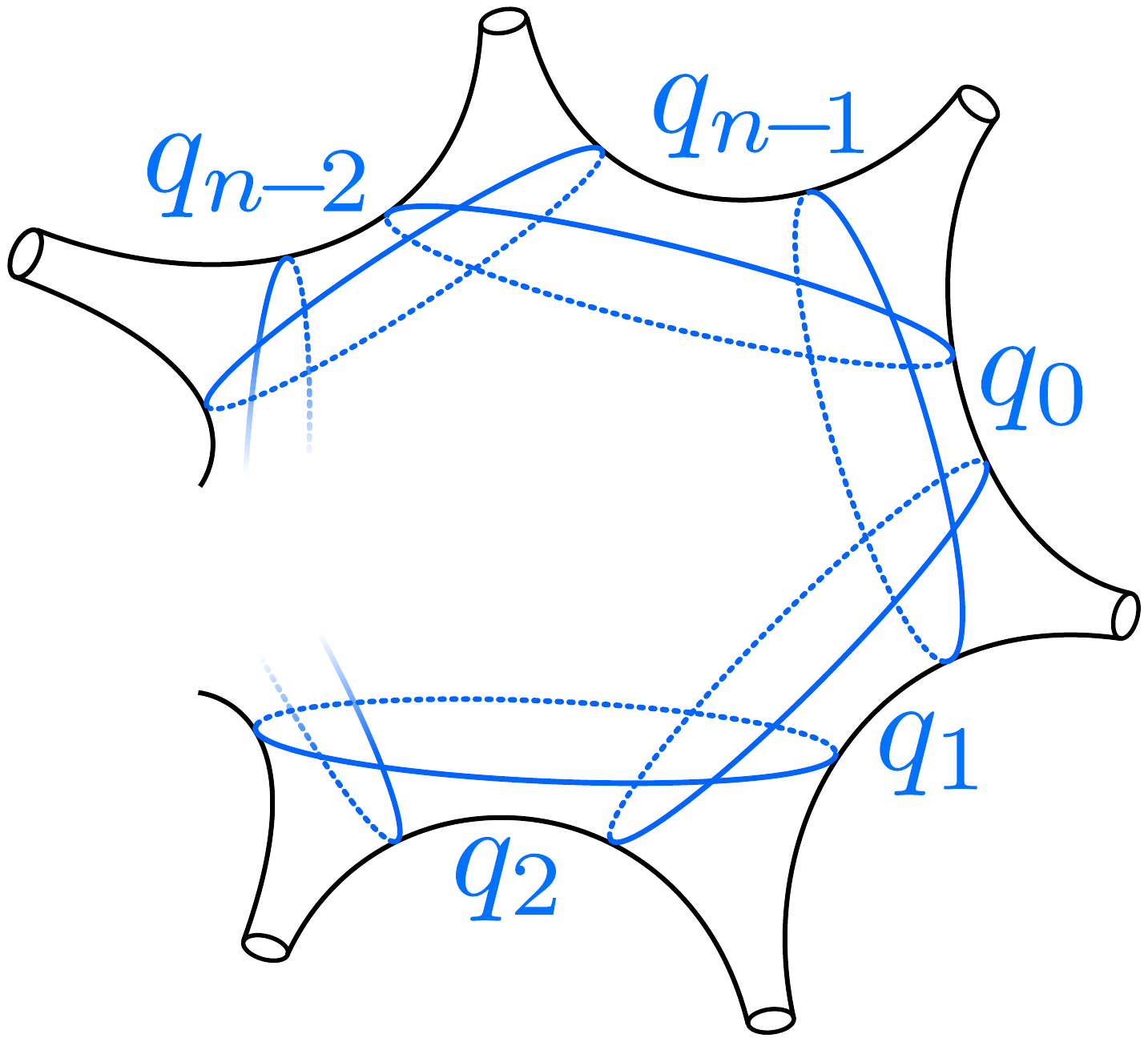}
      	\caption{}
    	\label{fig:S0n_gens}
  	\end{subfigure}
   	\begin{subfigure}{0.58\textwidth}
   		\centering
     	\includegraphics[scale=.3]{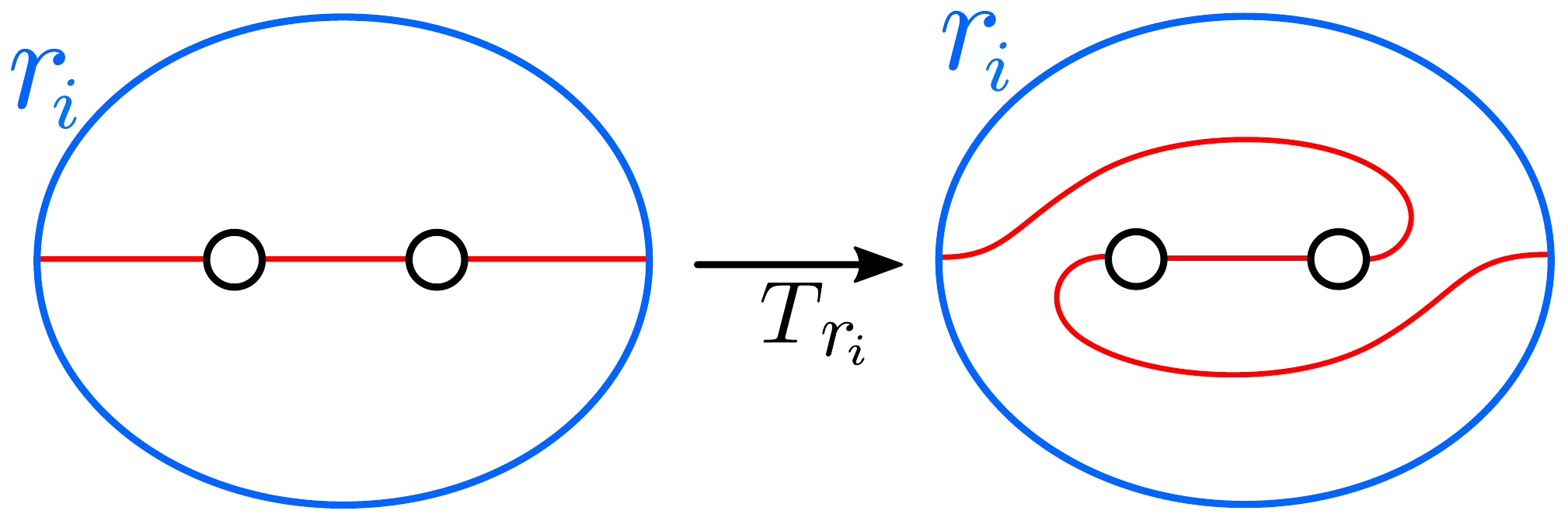}
      	\caption{}
     	\label{fig:half_twist}
  	\end{subfigure}       
 	\caption{Left: The mapping class group of the $n$--punctured sphere $\Sigma_{0,n}$, for $n\geq 4$, is generated by half-twists about curves $r_0, \ldots, r_{n-1}$. Right: A half twist about $r_i$ fixes $r_i$ and transposes the punctures in the twice-punctured disc bounded by $r_i$.}
  	\label{fig:S0n}
\end{figure}
	
    We begin by describing how  the computer programs \texttt{flipper} \cite{Bel13}, \texttt{SnapPy} \cite{CDGW09}, and \texttt{Regina} \cite{BBP99} are used to show that certain triangulations are non-geometric. Given a mapping class $\varphi$  on a hyperbolic surface $S$, described by a composition of left Dehn twists and/or half twists in the generators shown in \Cref{fig:surf_gens,fig:S0n}, we first use \texttt{flipper}  to certify that $\varphi$ is pseudo-Anosov. 
The certificate consists of an invariant train track carrying the stable lamination of $\varphi$,  an integer-valued transition matrix, and certified floating-point intervals for the weights of the branches.
Every region in the complement of the train track contains exactly one singularity of $\varphi$, enabling \texttt{flipper} to certify whether $\varphi$ is principal.
Finally, \texttt{flipper} punctures the surface at the singularities of $\varphi$ and computes the veering triangulation $\tri_\varphi$ of the mapping torus $M_{\phicirc}$. All of the above  \texttt{flipper} computations are rigorous.

The program  \texttt{SnapPy} can find an approximate solution to the gluing equations for the veering triangulation $\tri_\varphi$ (see discussion below). This approximate solution is a good heuristic indication that $\tri_\varphi$ is not geometric. To rigorously certify that $\tri_\varphi$ is not geometric, we 
 follow the method of Hodgson, Issa, and Segerman \cite{HIS16}, 
relying on \Cref{Thm:Francaviglia,Thm:NeumannYang} below. 

Let $\tri$ be an ideal triangulation of a hyperbolic 3-manifold $M$ with $k$ tetrahedra, and let $\vec{z}=(z_1,\dots,z_k)$ be a vector of complex numbers in bijection with the tetrahedra in $\tri$. Every $z_i$ has an associated \define{algebraic volume} $\mathrm{Vol}(z_i) \in \RR$, computed via the dilogarithm function \cite{NY99}. In particular, $\mathrm{Vol}(z_i)$ has the same sign as $\mathrm{Im}(z_i)$.
We define the algebraic volume $\mathrm{Vol}(\vec{z}) = \sum_{i=1}^k \mathrm{Vol}(z_i)$.

The \define{gluing and completeness equations} for  $\tri$ are a system of polynomial equations in  $z_1, \ldots, z_k$ \cite[Chapter 4]{Thu78}. Any solution $\vec{z}=(z_1,\dots,z_k)$ to this system of equations defines a representation $\rho \from \pi_1 M \to \mathrm{PSL}(2,\CC)$, unique up to conjugacy, in which peripheral elements map to parabolics. In the resulting structure on $M$, the shape parameter of the tetrahedron $\tet_i$ is exactly $z_i$.
If $\vec{z}$ is geometric, hence $\mathrm{Im}(z_i) > 0$ for each $i$, then $\rho$ is the discrete, faithful representation that gives the complete hyperbolic metric on $M$. In general, the number of solutions to the gluing and completeness equations that yield the discrete, faithful representation $\rho$ is either $0$ or $1$, with no solutions when $\tri$ is degenerate in the sense described below. 
The following statement combines \cite[Theorem 5.4.1 and Remark 4.1.20]{Fra04}.

\begin{theorem}[Francaviglia]\label{Thm:Francaviglia}
Let $\tri$ be an ideal triangulation of a hyperbolic 3-manifold $M$ whose volume is $\mathrm{Vol}(M)$.
Then every solution $\vec{z}$ to the gluing and completeness equations for $\tri$ satisfies $\mathrm{Vol}(\vec{z}) \leq \mathrm{Vol}(M)$, 
with equality if and only if $\vec{z}$ is the unique solution yielding the complete hyperbolic metric on $M$.
%
\end{theorem}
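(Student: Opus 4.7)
The plan is to interpret the theorem as a volume-rigidity statement for representations $\pi_1 M \to \mathrm{PSL}(2,\CC)$, in the spirit of Gromov and Thurston's bound for closed hyperbolic manifolds, extended by Francaviglia to the cusped setting.

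First, I would unpack the geometric content of a solution $\vec{z}$. To each tetrahedron $\tet_i$ of $\tri$, associate the straight ideal tetrahedron in $\HH^3$ with cross-ratio $z_i$. The gluing equations ensure that one can glue lifts of these tetrahedra inside $\HH^3$ to build a pseudo-developing map $D \from \widetilde M \to \HH^3$, equivariant with respect to a holonomy $\rho \from \pi_1 M \to \mathrm{PSL}(2,\CC)$; the completeness equations ensure that peripheral subgroups map to parabolic subgroups fixing a single point at infinity, so cusps develop consistently. The Bloch--Wigner dilogarithm formula identifies $\mathrm{Vol}(z_i)$ with $\int_{\tet_i} D^*\omega$, where $\omega$ is the hyperbolic volume form. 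Summing over a fundamental domain, $\mathrm{Vol}(\vec{z}) = \int_M D^* \omega$, a quantity $\mathrm{Vol}(\rho)$ that depends only on $\rho$ and not on the triangulation used to compute it.

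Second, I would prove the inequality $\mathrm{Vol}(\rho) \leq \mathrm{Vol}(M)$. The strategy is to realize $\mathrm{Vol}(\rho)$ as the pairing between the fundamental class of $M$ (relative to cusps) and a bounded cohomology class represented by the $\rho$--pullback of the hyperbolic straight-simplex volume cocycle. The key point is that this cocycle has $L^\infty$ norm equal to $v_3$, the volume of a regular ideal tetrahedron in $\HH^3$. Since Gromov's simplicial volume of $M$ equals $\mathrm{Vol}(M)/v_3$, pairing against any efficient fundamental cycle yields $|\mathrm{Vol}(\rho)| \leq v_3 \cdot \|M\| = \mathrm{Vol}(M)$. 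I expect the main obstacle to be the cusped setting: one must make sense of this pairing with locally finite chains and verify that the boundary contributions from the cusps vanish in the limit as one exhausts $M$ by compact cores. The parabolicity guaranteed by the completeness equations is precisely what allows the cuspidal tails of the developing map to be compressed without affecting the integral. The smearing / measure homology technique of Thurston, adapted to the cusped case, is what I would use here.

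Finally, for the equality case, I would argue by rigidity. Suppose $\mathrm{Vol}(\rho) = \mathrm{Vol}(M)$. Then the efficient straight fundamental cycles witnessing near-equality of $\mathrm{Vol}(M)/v_3$ with the simplicial volume must consist of tetrahedra whose $D$--images are arbitrarily close to regular ideal tetrahedra. A Lebesgue-density argument then forces $D$ to be a local isometry on a full-measure set, hence globally a local isometry by continuity. Together with the parabolic hypothesis on peripheral holonomies, this forces $D$ to be the developing map of the unique complete hyperbolic structure on $M$, and $\rho$ to be its discrete faithful holonomy. Under the bijective correspondence between shape parameters and straightened ideal tetrahedra, $\vec{z}$ must therefore agree with the shape data of the complete hyperbolic metric, and uniqueness of the solution follows from Mostow--Prasad rigidity.
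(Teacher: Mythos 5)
The paper does not actually prove this statement: it is quoted directly from Francaviglia \cite{Fra04} (Theorem 5.4.1 and Remark 4.1.20), and your outline is a reconstruction of the same route that the cited source takes — interpret a solution $\vec{z}$ as a pseudo-developing map $D$ with holonomy $\rho$, identify $\mathrm{Vol}(\vec{z})$ with the representation volume $\mathrm{Vol}(\rho)=\int_M D^*\omega$, bound $\mathrm{Vol}(\rho)$ by $v_3\cdot\|M\|=\mathrm{Vol}(M)$ using the straight volume cocycle in bounded cohomology with the cusped (locally finite chains, parabolic tails) adaptations, and settle the equality case by a rigidity argument. That architecture is correct and is exactly Francaviglia's.

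However, your equality-case step contains a genuine error: you conclude that equality forces $D$ to be a local isometry. That cannot be the right conclusion, because $D$ is the map obtained by straightening the given tetrahedra with shapes $z_i$, and there exist solutions whose holonomy is the discrete faithful representation but in which some tetrahedra are negatively oriented — these are precisely the non-geometric veering triangulations this paper is about, and the certification scheme of \Cref{cor:certify-non-geometric} depends on equality $\mathrm{Vol}(\vec{z})=\mathrm{Vol}(M)$ holding for such solutions. So an argument yielding ``equality $\Rightarrow$ $D$ is a local isometry'' proves too much and is false. The correct rigidity argument concludes something about the representation, not about $D$: near-efficient fundamental cycles force the $\rho$--equivariant measurable boundary map $\partial\HH^3\to\partial\HH^3$ to send the vertices of almost every regular ideal simplex to vertices of a regular ideal simplex, and an ergodicity/Mostow-type lemma then shows this boundary map is M\"obius, so $\rho$ is conjugate to the discrete faithful holonomy; only afterwards are the shapes recovered as cross-ratios of the parabolic fixed points attached to the ideal vertices, which gives uniqueness of the solution. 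Relatedly, the ``if'' direction (the solution yielding the complete metric attains equality) rests on the identity $\mathrm{Vol}(\vec{z})=\mathrm{Vol}(\rho)$ for \emph{arbitrary} solutions, including flat or negatively oriented tetrahedra; this is the actual content of the cited Remark 4.1.20 and requires the cusp-convergence care you allude to, not merely the geometric case.
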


\begin{figure}
 	\centering
   	\includegraphics[scale=.45]{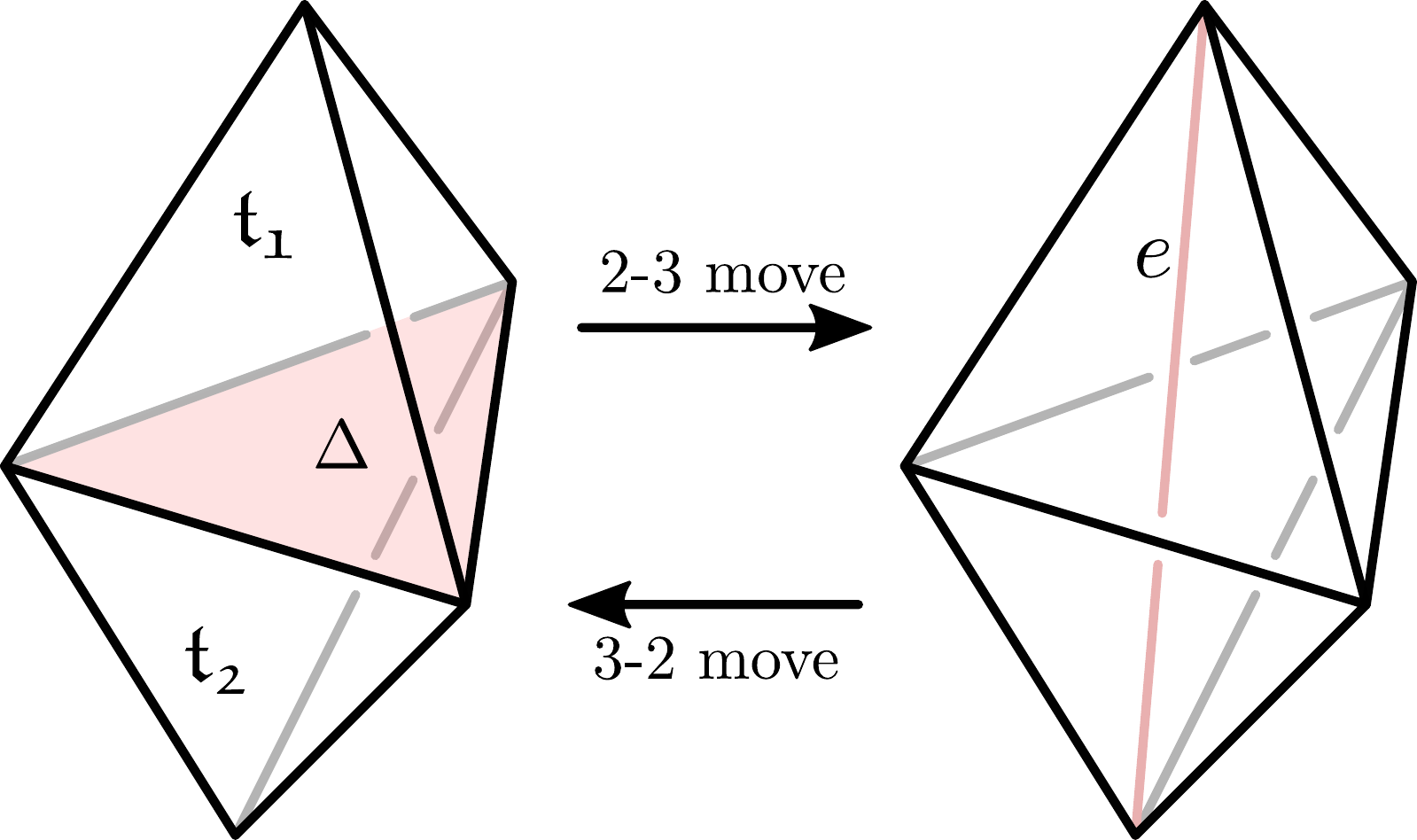}
   	\caption{In a Pachner 2-3 move, two tetrahedra meeting at a face $\tetface$ are replaced by three tetrahedra meeting at an edge dual to $\tetface$. A Pachner 3-2 move is the reverse of a 2-3 move.}
 	\label{fig:Pachner}
\end{figure}

Now, suppose the triangulation $\tri$ changes as follows.
Let $\tetface$ be a 2-simplex in $\tri$ which is the face of distinct tetrahedra $\tet_1$ and $\tet_2$. We can obtain a new triangulation $\tri'$ by replacing $\tetface$ by a dual edge $e$, and adding three faces each meeting $e$ and a distinct vertex of $\tetface$. Then  $\tri \to \tri'$ is called a \define{Pachner 2-3 move}, while the reverse operation  $\tri' \to \tri$ is called a \define{Pachner 3-2 move}. See \Cref{fig:Pachner}.
	

An ideal tetrahedron $\tet$ in a hyperbolic $3$--manifold $M$ is called \define{degenerate} if some edge of $\tet$ is homotopic into a cusp of $M$. If $\tet$ is non-degenerate, its lift to $\widetilde M = \HH^3$ is homotopic to a straight tetrahedron, hence $\tet$ can be assigned a shape parameter $z_\tet \in \CC \setminus \{0,1\}$. A triangulation $\tri$ is called non-degenerate if all of its tetrahedra are non-degenerate.

If a triangulation $\tri$ is equipped with a solution $\vec{z}$ of the gluing and completeness equations, and $\tri'$ is obtained from $\tri$ via a Pachner move, then from $\vec{z}$ we get an associated solution $\vec{z}\,'$ to the gluing and completeness equations for $\tri'$. Furthermore, if $\tri$ and $\tri'$ are both non-degenerate, we get algebraic volume information as well. The following result, due to  Neumann and Yang \cite[Proposition 10.1]{NY99}, is a consequence of the ``five-term relation," an identity of the dilogarithm function.

\begin{theorem}[Neumann--Yang]\label{Thm:NeumannYang}
Let $M$ be a hyperbolic 3-manifold with ideal triangulation $\tri$, and let $\vec{z}$ be a solution to the gluing and completeness equations for $\tri$. Let $\tri'$ be an ideal triangulation obtained from $\tri$ by a Pachner move, with no degenerate tetrahedra. If  $\vec{z}\,'$ is the solution to the gluing equations for  $\tri'$ corresponding to the solution $\vec{z}$ for $\tri$, then $\mathrm{Vol}(\vec{z})=\mathrm{Vol}(\vec{z}\,')$.
\end{theorem}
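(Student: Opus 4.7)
The plan is to prove Theorem \ref{Thm:NeumannYang} by reducing to the five-term dilogarithm identity, following the standard strategy. I will treat the Pachner $2$--$3$ move explicitly; the $3$--$2$ move is its inverse, so the same identity handles both cases.

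First, I will unpack the correspondence between Pachner moves and shape parameters. Let $\tet_1,\tet_2 \subset \tri$ be the two tetrahedra meeting along the face $\tetface$, and consider the bipyramid $B = \tet_1 \cup \tet_2$. Lifting to the universal cover, $B$ has five ideal vertices on $\bdy \HH^3$, which after normalization I may place at $0, 1, \infty, p, q \in \bdy \HH^3$ where $p, q$ are determined by the shape parameters of $\tet_1$ and $\tet_2$. These five boundary points determine the three new tetrahedra $\tet_1', \tet_2', \tet_3'$ of $\tri'$, whose shape parameters are cross-ratios of quadruples chosen from $\{0,1,\infty,p,q\}$ and therefore explicit rational functions of the shape parameters of $\tet_1,\tet_2$. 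The fact that $\vec{z}\,'$ is the ``associated'' solution to the gluing equations for $\tri'$ (meaning the one induced by the holonomy representation, which is unchanged by the Pachner move) is precisely the assertion that the new shape parameters are these cross-ratios; I will make this explicit.

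Second, I will invoke the classical identity governing the Bloch--Wigner dilogarithm $D$, which computes the algebraic volume of a single ideal tetrahedron: $\mathrm{Vol}(z) = D(z)$. The five-term relation asserts that for any five points $v_0, \ldots, v_4 \in \bdy \HH^3$ in general position,
\[
\sum_{i=0}^{4} (-1)^i D\bigl([v_0, \ldots, \widehat{v_i}, \ldots, v_4]\bigr) = 0,
\]
where $[\cdot]$ denotes the cross-ratio and $\widehat{v_i}$ means omission. This is the well-known five-term functional equation for $D$. Applying it to the five vertices of the bipyramid $B$, the two terms with appropriate sign correspond to $D(z_{\tet_1}) + D(z_{\tet_2})$ and the three remaining terms correspond to $D(z_{\tet_1'}) + D(z_{\tet_2'}) + D(z_{\tet_3'})$, with the orientation-respecting signs built into the choice of cross-ratio conventions. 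Summing over all tetrahedra of $\tri$ and $\tri'$ (which agree outside the bipyramid), the algebraic volumes $\mathrm{Vol}(\vec z)$ and $\mathrm{Vol}(\vec z\,')$ therefore coincide.

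The main technical obstacle is bookkeeping: matching the combinatorial data of the Pachner move to the signs and cross-ratio orderings in the five-term relation, and checking that non-degeneracy of both $\tri$ and $\tri'$ is exactly what is needed to guarantee all five cross-ratios lie in $\CC \setminus \{0,1\}$ so that the dilogarithm identity applies pointwise. Once the cross-ratio conventions are fixed consistently with the induced orientations of the straight tetrahedra determined by $\vec z$ and $\vec z\,'$, the theorem follows immediately from the five-term relation. I would either cite \cite{NY99} for these conventions or, if a self-contained exposition is desired, include a short appendix carrying out the explicit computation of the three new shapes in terms of the two old ones and verifying the sign assignments.
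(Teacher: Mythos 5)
Your proposal is correct and takes essentially the same route as the paper, which does not prove this statement itself but cites Neumann--Yang \cite[Proposition 10.1]{NY99} and explicitly attributes it to the five-term relation of the dilogarithm --- precisely your argument (five ideal vertex images of the bipyramid under the developing map of $\vec z$, old and new shapes as the five cross-ratios, Bloch--Wigner five-term identity, with non-degeneracy of both triangulations ensuring all cross-ratios avoid $\{0,1,\infty\}$). The only remaining work is the sign/ordering bookkeeping you already identify and defer to \cite{NY99}.
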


Combining  \Cref{Thm:Francaviglia,Thm:NeumannYang}, we get the following corollary:

\begin{corollary}\label{cor:certify-non-geometric}
	Let $M$ be a hyperbolic 3-manifold, and let $\tri_1,\dots,\tri_n$ be a sequence of non-degenerate ideal triangulations, with consecutive triangulations related by a Pachner move. Suppose $\tri_n$ has a geometric solution $\vec{z}_n$ to the gluing and completeness equations. For $i \in \{0, \ldots, n-1\}$, let $\vec{z}_i$ be the solution for $\tri_i$ obtained from $\vec{z}_{i+1}$ via the corresponding Pachner move. If the solution $\vec{z}_1$ is non-geometric, then $\tri_1$ is a non-geometric triangulation.
%
\end{corollary}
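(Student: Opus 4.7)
The plan is to use Francaviglia's volume rigidity (Theorem \ref{Thm:Francaviglia}) together with the Pachner invariance of algebraic volume (Theorem \ref{Thm:NeumannYang}) to propagate the maximal-volume property backward along the sequence of triangulations, and then read off non-geometricity from the fact that the equality case of Francaviglia forces $\vec z_1$ to represent the complete hyperbolic structure.

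First I would observe that since $\vec z_n$ is geometric, it produces the discrete faithful representation of $\pi_1(M)$, so Theorem \ref{Thm:Francaviglia} gives $\mathrm{Vol}(\vec z_n) = \mathrm{Vol}(M)$. Next, I would apply Theorem \ref{Thm:NeumannYang} inductively to each Pachner move relating $\tri_{i+1}$ to $\tri_i$. The hypothesis that every $\tri_i$ is non-degenerate is exactly what is needed to invoke that theorem at each step, since degenerate tetrahedra (those whose edges are homotopic into a cusp) are the only obstruction to the five-term relation behind volume invariance. This chain of equalities yields
\[
\mathrm{Vol}(\vec z_1) \;=\; \mathrm{Vol}(\vec z_2) \;=\; \cdots \;=\; \mathrm{Vol}(\vec z_n) \;=\; \mathrm{Vol}(M).
\]

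Now I would invoke the equality case of Theorem \ref{Thm:Francaviglia}: any solution $\vec z$ to the gluing and completeness equations for a triangulation of $M$ with $\mathrm{Vol}(\vec z) = \mathrm{Vol}(M)$ must be the unique solution corresponding to the complete hyperbolic structure. Hence $\vec z_1$ is precisely the shape-parameter solution that realizes the discrete faithful representation of $\pi_1(M)$ in the combinatorial pattern of $\tri_1$. Since we are told $\vec z_1$ is non-geometric, some tetrahedron of $\tri_1$ is assigned a shape with non-positive imaginary part under the complete hyperbolic metric, so $\tri_1$ cannot be realized by positively oriented straight ideal tetrahedra glued by isometry in its combinatorial pattern. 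By the definition of a geometric triangulation, $\tri_1$ is non-geometric.

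There is no serious obstacle; the argument is a short bookkeeping exercise once the two cited theorems are in hand. The only subtle point is to notice that non-degeneracy of every $\tri_i$ is used twice, once to make the Pachner correspondence of solutions well-defined and once for each application of Theorem \ref{Thm:NeumannYang}, and that the equality case in Theorem \ref{Thm:Francaviglia} delivers genuine identification of $\vec z_1$ with the complete structure (not merely volume coincidence), which is what lets us reinterpret ``non-geometric solution'' as ``non-geometric triangulation''.
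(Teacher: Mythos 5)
Your proposal is correct and follows the same route the paper intends: apply Theorem \ref{Thm:Francaviglia} to the geometric solution $\vec{z}_n$ to get $\mathrm{Vol}(\vec{z}_n)=\mathrm{Vol}(M)$, propagate this equality backward through the non-degenerate Pachner moves via Theorem \ref{Thm:NeumannYang}, and then use the equality case of Theorem \ref{Thm:Francaviglia} to identify $\vec{z}_1$ with the unique solution realizing the complete structure, so its non-geometricity rules out any geometric solution for $\tri_1$. This matches the paper's argument, which states the corollary as a direct combination of these two theorems.
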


To establish that a given veering triangulation $\tri$ is non-geometric, we will first find a Pachner path from $\tri=\tri_1$ to a triangulation $\tri_n$ that has a geometric solution $\vec{z}_n$. For each tetrahedron in $\tri_n$, \texttt{SnapPy} gives a rigorously verified rectangle in $\CC$ containing its shape parameter, using an algorithm derived from \texttt{HIKMOT} \cite{HIKMOT16}. In other words, we get a box $K_n\subset\CC^{|\tri_n|}$ which is guaranteed to contain a geometric solution to the gluing and completeness equations. We then follow the path backwards, from $\tri_n$ to $\tri_1$, obtaining for each intermediate triangulation $\tri_i$ a box $K_i \subset \CC^{|\tri_i|}$ containing the corresponding solution $\vec{z}_i$. This certifies that $\tri_i$ is non-degenerate. When $\tri_1$ is reached, we check that the corresponding box $K_1$ has at least one coordinate (i.e., at least one shape parameter) whose imaginary part is negative. Since $K_1$ is guaranteed to contain the solution $\vec{z}_1$, it follows that this solution is non-geometric, so by \Cref{cor:certify-non-geometric} the triangulation $\tri=\tri_1$ is non-geometric.

In practice, a path to a geometric triangulation is found by  randomly choosing Pachner moves based at edges and faces of negatively oriented tetrahedra. In our examples, the paths we find range in length from 4 to 18. We use \texttt{Regina} to help keep track of the labeling of 
edges on the reverse path from $\tau_n$ to $\tau_1$ (in particular, this requires the ability to construct explicit isomorphisms of triangulations).

The following proposition gives a list of mapping classes which have been rigorously verified to be principal pseudo-Anosovs with non-geometric veering triangulations. These mapping classes are described as words in the left Dehn twists and half-twists about the curves shown in \Cref{fig:surf_gens,fig:S0n}. 

\begin{proposition} \label{prop:seeds}
For each of the following surfaces $S_i$, the mapping class $\varphi_i$ described below is a principal pseudo-Anosov. Furthermore, the veering triangulation of $M_{\phicirc_i}$ is non-geometric.
\begin{itemize}
\item $S_1 = \Sigma_{2,0}$ and
$\varphi_1 = T_{a_1}T_{c_0}^{-2}T_{b_0}T_{a_0}^{-1}T_{b_1}^{-1}T_{b_0}^{-1}T_{a_0}T_{b_0}.$

\smallskip
\item
$S_2 = \Sigma_{2,1}$ and $\varphi_2 = T_{c_0}T_{b_0}^{-2}T_{c_0}T_{a_0}T_{b_0}T_{a_1}^{-1}T_{a_0}T_{b_1}.$

\smallskip
\item
$S_3 = \Sigma_{1,2}$ and $\varphi_3 = T_{a_0}^{-2}T_{r_0}^{-1}T_{b_0}^{-1}T_{p_0}^{-1}T_{a_0}T_{b_0}T_{p_0}^{-3}$.


\smallskip
\item $S_5 = \Sigma_{0,5}$ and
$\varphi_5 =T_{r_3}^{-2}T_{r_0}^{-3}T_{r_1}^2T_{r_0}^{-1}T_{r_2}^{-1}T_{r_4}$.

\smallskip
\item $S_6 = \Sigma_{0,6}$ and 
$ \varphi_6 = T_{r_5}^{-1}T_{r_1}T_{r_2}^{-1}T_{r_3}T_{r_4}^{-1}T_{r_0}^{-1}T_{r_1}T_{r_2}T_{r_1}^2$.

\smallskip
\item $S_7 = \Sigma_{0,7}$ and
$ \varphi_7 = T_{r_5}^2T_{r_4}T_{r_3}^{-1}T_{r_1}^{-1}T_{r_4}T_{r_2}^{-1}T_{r_3}T_{r_1}T_{r_2}^{-1}$.

\end{itemize}
\end{proposition}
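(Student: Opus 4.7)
The plan is to prove each of the six claims in the proposition by a rigorous computer-assisted verification, following exactly the pipeline outlined in the paragraph preceding the statement (which in turn adapts the strategy of Hodgson--Issa--Segerman \cite{HIS16}). The proof reduces entirely to running a verified computation for each $(S_i,\varphi_i)$, so the exposition of the proof will consist mostly of describing the certificate produced and invoking the theorems already set up in this section.

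First, for each $(S_i,\varphi_i)$ we feed the word $\varphi_i$, expressed in the generators depicted in \Cref{fig:surf_gens,fig:S0n}, into \texttt{flipper}. This produces a rigorous certificate that $\varphi_i$ is pseudo-Anosov: an invariant train track carrying the stable lamination, an integer transition matrix realizing the action of $\varphi_i$ on the track, and certified intervals for the branch weights. Because every complementary region of the track contains exactly one singularity of the invariant foliation, the same certificate establishes that $\varphi_i$ is principal (i.e.\ all interior singularities are $3$--prong and all puncture singularities are $1$--prong). \texttt{flipper} then returns the veering triangulation $\tri_i := \tri_{\varphi_i}$ of $M_{\phicirc_i}$ together with its combinatorial gluing data.

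Second, we certify non-geometricity via \Cref{cor:certify-non-geometric}. Starting from $\tri_i$, we search for a sequence of Pachner $2$--$3$ and $3$--$2$ moves $\tri_i = \tri^{(1)} \to \tri^{(2)} \to \cdots \to \tri^{(n)}$ terminating at a triangulation $\tri^{(n)}$ that admits a geometric solution to the gluing and completeness equations. In practice, such a path is found by greedily performing Pachner moves based at edges and faces incident to negatively oriented tetrahedra; as noted, the paths we discover have length between $4$ and $18$. On $\tri^{(n)}$, we use the \texttt{HIKMOT}-derived routines in \texttt{SnapPy} to produce a box $K_n \subset \CC^{|\tri^{(n)}|}$, verified in interval arithmetic, that contains a genuine solution $\vec{z}_n$ with strictly positive imaginary parts in every coordinate, so $\tri^{(n)}$ is non-degenerate and geometric.

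Third, we propagate the certified solution backwards. Each Pachner move induces an explicit rational map between shape-parameter spaces, and applying interval arithmetic to this map pulls each $K_{j+1}$ back to a box $K_j \subset \CC^{|\tri^{(j)}|}$ containing the induced solution $\vec{z}_j$. At every step we verify that all coordinates of $K_j$ are bounded away from $\{0,1\}$, certifying non-degeneracy of $\tri^{(j)}$. We use \texttt{Regina} to maintain an explicit isomorphism between successive triangulations and to track edge labels under the reverse Pachner moves, so that the final box $K_1 \subset \CC^{|\tri_i|}$ is correctly identified with the shape parameters of $\tri_i$. Finally we read off from $K_1$ at least one coordinate whose entire rectangle has strictly negative imaginary part; by \Cref{cor:certify-non-geometric} (applied to the sequence $\tri^{(1)},\dots,\tri^{(n)}$ together with the certified non-degeneracy at each stage) this forces $\tri_i$ to be non-geometric. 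The only real obstacle is implementation bookkeeping: ensuring that the \texttt{flipper} output, the \texttt{SnapPy} labeling, and the \texttt{Regina} isomorphisms are consistent, and that every numerical step is executed in verified interval arithmetic rather than floating point; once this is set up, the certificates for the six examples run independently and the proposition follows.
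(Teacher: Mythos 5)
Your proposal is correct and follows essentially the same route as the paper: \texttt{flipper} certifies that each $\varphi_i$ is a principal pseudo-Anosov, and a Pachner path to a geometric triangulation, verified in interval arithmetic via \texttt{SnapPy}/\texttt{HIKMOT} and tracked with \texttt{Regina}, certifies non-geometricity through \Cref{cor:certify-non-geometric}. The one wrinkle you gloss over is the closed case $S_1 = \Sigma_{2,0}$: since \texttt{flipper} requires at least one puncture, the paper runs $\varphi_1$ on $\Sigma_{2,1}$, checks that the singularity at the puncture is $3$--pronged, and only then fills the puncture to conclude that $\varphi_1$ is principal on the closed surface.
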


\begin{proof}
For each $S_i$, \texttt{flipper} certifies that $\varphi_i$ is a principal pseudo-Anosov. Then, \texttt{SnapPy} combined with \Cref{cor:certify-non-geometric} certifies that the veering triangulation of $M_{\mathring{\varphi}_i}$ is non-geometric. A detailed certificate of non-geometricity, including a path from the veering triangulation to a geometric triangulation, appears in the ancillary files \cite{FTW:Auxiliary}.

For working with $\varphi_1$ in \texttt{flipper}, we actually consider this mapping class on $\Sigma_{2,1}$, with the puncture located as in \Cref{fig:surf_gens}. This is because the data structure used by \texttt{flipper} requires at least one puncture. The program computes that $\varphi_1$ has a trivalent singularity at the unique puncture of $\Sigma_{2,1}$. Thus we may fill the puncture, recovering a principal pseudo-Anosov on the 
 closed surface $\Sigma_{2,0}$. For the other $\varphi_i$, we work with the surface $S_i$ exactly as given.
\end{proof}

\section{Non-geometric triangulations via the Thurston norm}\label{sec:norm}

In this section, we use Thurston norm theory to show the existence of a mapping class with non-geometric veering triangulation for every hyperbolic surface of complexity at least $2$. The eventual result will be that (finite covers of) the mapping tori of the classes $\varphi_1, \ldots, \varphi_7$ from \Cref{prop:seeds} contain fibers homeomorphic to every surface $S$ with $\xi(S) \geq 2$. This will imply  \Cref{Prop:NongeomExists} from the Introduction.

We begin by reviewing some classical results about the Thurston norm, and then proceed to find the desired fiber surfaces in the mapping tori of  $\varphi_1, \ldots, \varphi_7$.

\subsection{The Thurston norm}

Let $M$ be a compact orientable 3--manifold with $\bdy M$ a possibly empty union of tori, such that the interior of $M$ is hyperbolic. We will pass freely between $M$ and its interior. Thurston \cite{Thu86} showed that there is a norm ${\|\cdot\| \from H_2(M,\bdy M ; \RR)\to \RR}$ on second homology, defined on integral classes by the property
\[
\|x\| = \min  \left\{-\chi(S) \: :  \: S \text{ is an embedded surface without $S^2$ components representing } x \right\}.
\]
He proved that this norm, now called the \define{Thurston norm}, has the following properties:

\begin{enumerate}
	\item The unit ball $B = \{x : \|x \| \leq 1 \}$  is a centrally symmetric polyhedron.
	\item If $M$ is a fibered 3--manifold with fiber $F$, then the class $[F]\in H_2(M,\bdy M)$ lies on a ray from the origin that passes through an open top-dimensional face $\face \subset  \bdy B$. In this case, $\face$ is called a \define{fibered face}, and the open cone $\RR_+\face$ is called a \define{fibered cone}.
\item If  $x \in \RR_+\face$ is a primitive integral homology class lying in a fibered cone, then $x$ is represented by a fiber surface $S$. Furthermore,  $\|x\|= - \chi(S)$. In particular, if $\dim(H_2(M,\bdy M)) \ge 2$, then  $\RR_+ \face$ contains infinitely many fiber classes.

\end{enumerate}

When $M$ is fibered with fiber $F$, the pseudo-Anosov monodromy $\varphi \colon F\to F$ of $M$ induces a \define{suspension flow} $\eta$ on $M$. We also have $\eta$--invariant $2$--dimensional foliations $\Lambda^{\pm}$, which are suspensions of the invariant foliations $\fol^{\pm}$ associated to $\varphi$.  Let 
$\RR_+ \face$ be the fibered cone containing $F$.
Then $\face$  determines $\Lambda^{\pm}$ and the flow $\eta$ (up to isotopy and reparametrization), independent of the fiber $F$.
Moreover, for every fiber $S$ in $\RR_+\face$, the foliations $\Lambda^+$ and $\Lambda^-$ are transverse to $S \subset M$, and the intersections $\Lambda^{\pm}\cap S$ are isotopic to the stable and unstable foliations $\fol_S^{\pm}$ associated to the monodromy of $S$. 
See Fried \cite{Fri82} and McMullen \cite{McM00} for more details.

A \define{slope} on a torus $T$ is an isotopy class of simple closed curves, or equivalently an (unsigned) primitive homology class in $H_1(T; \ZZ)$.
In a fibered $3$--manifold $M$, with boundary tori $T_1, \ldots, T_m$, any fibration of $M$ determines two slopes on each torus $T_i$.
First, a fiber $F$  must meet every  $T_i$ in a union of disjoint, consistently oriented simple closed curves. The isotopy class of these simple closed curves is called the \define{boundary slope} of $F$ on $T_i$. 
Second, the orbit under the flow $\eta$ of a singular leaf of $\fol^\pm$ is a ($2$--dimensional) singular leaf of $\Lambda^\pm$. Every singular leaf traces out a simple closed curve on some $T_i$, whose slope is called the \define{degeneracy slope} on $T_i$. We emphasize that the degeneracy slope is entirely determined by $\Lambda^\pm$, hence by the fibered cone containing $F$.


%

\begin{lemma}
\label{Lem:DegeneracySlope}
Let $M = M_\varphi$ be the mapping torus of a principal pseudo-Anosov $\varphi \from F \to F$. Then, on every component of $\bdy M$, the boundary slope of $F$ intersects the degeneracy slope once.
If $S \subset M$ is another fiber surface in the same fibered cone as $F$, then the monodromy of $S$ is principal if and only if the boundary slope of $S$ intersects the degeneracy slope once.
\end{lemma}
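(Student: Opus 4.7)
The plan is to extract a formula relating the prong order of the monodromy of an arbitrary fiber at a puncture to an intersection number on the corresponding boundary torus, and then apply this formula in both directions of the lemma. Fix a boundary torus $T_i \subset \bdy M$ corresponding to a puncture $p$ of $F$, and choose coordinates on $T_i$ consisting of a meridian $\mu_F$ (the boundary slope of $F$) and a longitude $\lambda$ (a flow line of the suspension flow $\eta$). I would observe that $\Lambda^\pm \cap T_i$ is a disjoint union of $r$ parallel simple closed curves of slope $\delta$, the degeneracy slope, where $r$ is precisely the number of orbits of the permutation by which $\varphi$ acts on the separatrices at $p$. Crucially, both $\delta$ and $r$ are determined by the $2$-dimensional foliation $\Lambda^\pm$ alone, and hence are combinatorial invariants of the entire fibered cone containing $F$; they do not depend on the choice of fiber.

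The key formula in the argument is
\[
k_X \;=\; r \cdot i(\mu_X, \delta)
\]
for any fiber $X$ in the fibered cone, where $\mu_X$ is the boundary slope of $X$ on $T_i$ and $k_X$ is the order of the prong singularity at the puncture of $X$ corresponding to $T_i$. The reason is that each prong at this puncture is a half-leaf of the foliation $\Lambda^\pm \cap X$ emanating from the puncture. These half-leaves are in bijection with the intersection points of a single boundary circle of $X \cap T_i$ (a curve of slope $\mu_X$) with the $r$ parallel singular leaves of $\Lambda^\pm$ on $T_i$ (curves of slope $\delta$); there are exactly $r \cdot i(\mu_X, \delta)$ such intersections.

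Granting this formula, both parts of the lemma follow easily. For part 1, principality of $\varphi$ means that the puncture of $F$ at $T_i$ is a $1$-prong, so $k_F = 1$, and the formula forces both $r = 1$ and $i(\mu_F, \delta) = 1$ on every component of $\bdy M$. For part 2, since $r = 1$ is an invariant of the cone (as just shown), for any other fiber $S$ in the cone the formula specializes to $k_S = i(\mu_S, \delta)$ at every boundary torus. It remains to handle the interior singularities of $S$'s monodromy, but this is automatic: each interior $2$-dimensional singular leaf of $\Lambda^\pm$ has a $3$-pronged transverse cross-section (as witnessed by the principal fiber $F$), and this prong order is an intrinsic property of the singular leaf, unaffected by which fiber one intersects. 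Hence the monodromy of $S$ has only $3$-prong interior singularities automatically, and its monodromy is principal if and only if $k_S = 1$ on every boundary torus, equivalently $i(\mu_S, \delta) = 1$ on every boundary torus.

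The chief technical step is the rigorous justification of the formula $k_X = r \cdot i(\mu_X, \delta)$, in particular the bijection between prongs of $X$ at the puncture and intersection points of $\mu_X$ with the singular leaves. This reduces to a local computation in a collar neighborhood of $T_i$: the suspension flow is tangent to $T_i$ in the longitude direction, $\Lambda^\pm$ is flow-invariant and meets $T_i$ transversely to $\mu_F$, and any fiber $X$ meets $T_i$ transversely as well, so a neighborhood of each boundary circle of $X$ inherits the product structure in which prongs and intersections with singular leaves can be matched up directly. I expect to use the description of the foliations associated to a fibered cone due to Fried and McMullen (already cited in this section) to make this local identification precise.
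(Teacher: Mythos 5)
Your argument is correct and follows essentially the same route as the paper's proof: use that $\Lambda^{\pm}$ and the suspension flow depend only on the fibered cone, that $\fol^{+}_{S}\cong\Lambda^{+}\cap S$ so interior singularities of any fiber's monodromy are automatically $3$--pronged, and that prongs at a puncture correspond bijectively to intersections of the boundary slope with the singular leaves on the cusp torus. Your explicit bookkeeping of the multiplicity $r$ of parallel singular curves on $T_i$ (forced to be $1$ by the principal fiber $F$) is a slightly more careful version of what the paper asserts directly, but the substance is the same.
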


\begin{proof}
Let $\fol^+_F$ be the stable foliation of $\varphi$ on $F$. Since $\varphi$ is principal, $\fol^+_F$ has $3$--prong singularities at interior points of $F$. Thus $\Lambda^+$ also has $3$--prong singularities at interior points of $M$. In addition, every puncture of $F$ meets exactly one singular leaf of $\fol^+_F$. Thus, on every cusp torus $T_i \subset \bdy M$, a loop about the puncture  of $F$ intersects the degeneracy slope in exactly one point.

Let $S$ be another fiber surface in the same fibered cone as $F$. As mentioned above, the stable foliation $\fol^+_S$ is isotopic to $\Lambda^+ \cap S$, hence has $3$--prong singularities at interior points of $S$. Meanwhile, the singular prongs of $\fol^+_S$ at a given puncture of $S$ are in bijective correspondence with points of  $\Lambda^+ \cap \gamma_i$, where $\gamma_i$ denotes a loop about the puncture. Thus the monodromy of $S$ is principal if and only if every $\gamma_i$ intersects the degeneracy slope once.
\end{proof}

For a pseudo-Anosov $\varphi \colon S \to S$, recall that  $\phicirc \colon \Scirc \to \Scirc$ denotes the restricted map obtained by puncturing $S$ at the singularities of $\varphi$.
The mapping torus $\mathring{M} = M_{\phicirc}$ can  be constructed by drilling $M_\varphi$ along the singular flow-lines of $\Lambda^\pm$, i.e. the orbits of the singularities of $\fol^\pm$ under the flow $\eta$; in particular, $\mathring{M}$ depends only on the face $\face$ containing $S$.
Furthermore,  the fibered face of $M_{\phicirc}$ whose cone contains $\Scirc$ depends only on $\face$.

 
 The following lemma is a special case of \cite[Proposition 2.7]{MT17}.
 
 \begin{lemma}[Agol]
\label{Lem:FaceInvariance}
Let $M$ be a fibered hyperbolic 3-manifold with fibered face $\face$. Then any two fibers $F_1, F_2 \in \RR_+ \face$ 
produce the same veering triangulation of $\mathring{M}$, up to isotopy. 
\end{lemma}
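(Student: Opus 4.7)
The plan is to reinterpret Guéritaud's construction (\Cref{Thm:Gueritaud}) so that, although it is phrased using the flat structure on a particular fiber, it depends only on data intrinsic to the face $\face$. As stated in the paragraph preceding the lemma, the suspension flow $\eta$, the $2$-dimensional singular foliations $\Lambda^\pm$ on $M$, and the set of singular orbits are determined (up to isotopy and reparametrization) by $\face$, by the work of Fried \cite{Fri82} and McMullen \cite{McM00}. Hence the drilled manifold $\mathring M$ and the restrictions of $\eta$ and $\Lambda^\pm$ to $\mathring M$ are also determined by $\face$.

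I would first pass to the infinite cyclic cover $\mathring N \to \mathring M$ corresponding to the fiber subgroup. This cover is also intrinsic to $\face$: any two primitive integral classes in $\RR_+\face$ span the same ray in $H^1(\mathring M;\RR)$, so they have the same kernel in $\pi_1 \mathring M$. The flow $\tilde\eta$ and the foliations $\tilde\Lambda^\pm$ lift to $\mathring N$ without making reference to a particular fiber, and any $F_i \in \RR_+\face$ has a lift $\tilde F_i$ that is a cross-section of $\tilde\eta$.

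The core step is to reinterpret tetrahedra as intrinsic objects in $\mathring N$. A maximal singularity-free rectangle $\mathcal R$ on $\tilde F_i$ corresponds, via saturation by the flow, to a three-dimensional \emph{flow box}
\[
B(\mathcal R) \: = \: \bigcup_{t \in \RR} \tilde \eta^{\, t}(\mathcal R),
\]
whose top and bottom faces lie in leaves of $\tilde\Lambda^+$, whose vertical sides lie in leaves of $\tilde\Lambda^-$, and whose four vertical edges are arcs of four distinct singular orbits of $\tilde\eta$. Maximality of $\mathcal R$ in the flat structure on $\tilde F_i$ translates into maximality of $B(\mathcal R)$ as a singularity-free flow box of this form, a condition intrinsic to $(\tilde\eta, \tilde\Lambda^\pm)$ and hence independent of $i$. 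The ideal tetrahedron constructed from $\mathcal R$ has its four ideal vertices on these four singular orbits, and face identifications between adjacent Guéritaud tetrahedra correspond to shared codimension-one faces of adjacent flow boxes. Thus the tetrahedra and gluings produced from $\tilde F_1$ and from $\tilde F_2$ are canonically identified through their common intrinsic description in $\mathring N$.

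Finally, the deck group $\ZZ$ of $\mathring N \to \mathring M$ preserves $\tilde\eta$ and $\tilde\Lambda^\pm$, and therefore permutes the intrinsic flow boxes in a way that respects the identification between the two Guéritaud pictures; the common triangulation thus descends to a single triangulation of $\mathring M$ up to isotopy. The step I expect to be the main obstacle is establishing the precise correspondence between ``maximal singularity-free rectangle on $\tilde F_i$'' and ``maximal singularity-free flow box in $\mathring N$,'' particularly that the two notions of maximality agree. This should follow from the flow-transversality of $\tilde F_i$ together with the absence of horizontal and vertical saddle connections (which puts every singularity on the interior of a side of a maximal rectangle, as noted in \Cref{Sec:VeeringBackground}), but it does require a short, careful argument.
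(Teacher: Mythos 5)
Your overall strategy (reinterpret Gu\'eritaud's tetrahedra as objects intrinsic to the suspension flow and the foliations $\Lambda^\pm$, which by Fried--McMullen depend only on the face $\face$) is the right one and is essentially how this fact is established in the literature; note the paper itself does not prove \Cref{Lem:FaceInvariance} but cites it as a special case of \cite[Proposition 2.7]{MT17}. However, your reduction contains a genuine error: you claim that any two primitive integral classes in $\RR_+\face$ span the same ray, hence determine the same kernel and the same infinite cyclic cover $\mathring N$. This is false whenever $b_1(\mathring M) \geq 2$: two distinct primitive integral classes on a common ray would be equal, so the only case in which your claim holds is the trivial case $F_1 = F_2$. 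The whole content of the lemma concerns non-proportional fibers (e.g.\ in \Cref{Prop:NonGeomPosGenus,Prop:NonGeomZeroGenus} the fibers $F_a$ in one cone have different topological types), and for such fibers the associated infinite cyclic covers are different; indeed $F_2$ need not even lift to the cover dual to $F_1$, since the class $[F_1]$ generally restricts nontrivially to $H_1(F_2)$. So there is no common cover $\mathring N$ in which to compare the two Gu\'eritaud pictures, and the descent-by-deck-group step at the end collapses as well.

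The fix is to carry out your flow-box idea in a genuinely fiber-independent ambient space: either directly in $\mathring M$ (comparing the two triangulations as complexes of flow-saturated boxes whose vertical edges limit on the drilled singular orbits), or, more cleanly, $\pi_1(\mathring M)$--equivariantly in the universal cover, where the orbit space of the lifted suspension flow is a plane equipped with two transverse (non-singular, after drilling) foliations together with the marked points coming from lifts of the singular orbits. Maximal rectangles in that orbit space missing the marked points are defined without reference to any fiber, and a cross-section $\tilde F_i$ identifies them bijectively with the maximal singularity-free rectangles of the flat structure on $F_i$ (this is where your ``two notions of maximality agree'' point belongs, and it does require the absence of horizontal/vertical saddle connections). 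Equivariance under all of $\pi_1(\mathring M)$, not just a $\ZZ$ deck group, is then what lets the common combinatorial structure descend to a single triangulation of $\mathring M$, with Gu\'eritaud's \Cref{Thm:Gueritaud} applied to either one fiber guaranteeing that it is in fact an ideal triangulation. With that replacement your argument goes through and recovers the statement of \cite[Proposition 2.7]{MT17}.
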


%

We close this background section with two easy but useful observations that date back to Thurston \cite{Thu86}.

\begin{fact}\label{Fact:Linearity}
For $x,y \in H_2(M,\partial M;\RR)$, the equality $\|x+y\|=\|x\|+\|y\|$ holds if and only if $x$ and $y$ are in the same cone over a face of the unit Thurston norm ball.
\end{fact}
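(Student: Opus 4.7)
The plan is to prove this as a purely convex-geometric statement, relying only on property (1) from Thurston's list: the unit ball $B = \{v : \|v\| \le 1\}$ is a convex polytope. The two facts I will use are that every face $\face$ of $B$ can be written as $B \cap H$ for some supporting hyperplane $H$ of $B$, and that a convex function on a closed interval which attains its maximum at an interior point is constant. The cases $x=0$ or $y=0$ are trivial since the origin belongs to the cone over every face, so throughout I will assume $x,y\neq 0$.

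For the ``if'' direction, I would suppose that $x$ and $y$ both lie in the cone $\RR_+\face$ over some face $\face$ of $B$. Writing $x = \|x\|\, \hat x$ and $y = \|y\|\, \hat y$ with $\hat x, \hat y \in \face$, convexity of $\face$ gives
\[
\frac{x+y}{\|x\|+\|y\|} \: = \: \frac{\|x\|}{\|x\|+\|y\|}\, \hat x \: + \: \frac{\|y\|}{\|x\|+\|y\|}\, \hat y \: \in \: \face \subset \bdy B,
\]
so $(x+y)/(\|x\|+\|y\|)$ has norm $1$, yielding $\|x+y\| = \|x\|+\|y\|$.

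For the ``only if'' direction, set $p = x/\|x\|$ and $q = y/\|y\|$, both in $\bdy B$, and first I would show that the entire segment $[p,q]$ lies on $\bdy B$. The function $f\colon [0,1] \to \RR$ defined by $f(\mu) = \|\mu p + (1-\mu) q\|$ is convex with $f(0)=f(1)=1$; hence $f \le 1$ throughout, while the hypothesis $\|x+y\|=\|x\|+\|y\|$ translates to $f(\lambda)=1$ at $\lambda = \|x\|/(\|x\|+\|y\|) \in (0,1)$. A convex function attaining its maximum at an interior point is constant, so $f \equiv 1$ and $[p,q] \subset \bdy B$. To produce a single face containing the whole segment, pick any interior point $r$ of $[p,q]$, let $H = \{\ell = 1\}$ be a supporting hyperplane of $B$ at $r$, and set $\face = B \cap H$. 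Writing $r = t p + (1-t) q$ with $t \in (0,1)$ and noting that $\ell(p), \ell(q) \le 1$ for $p,q \in B$, the equality $\ell(r) = 1$ forces $\ell(p) = \ell(q) = 1$, i.e., $p, q \in \face$. Consequently $x, y \in \RR_+\face$, as desired.

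I do not foresee any serious obstacles; the only step that requires a moment's thought is the last one, extracting a single face $\face$ from the knowledge that the segment $[p,q]$ lies in $\bdy B$, and this is handled cleanly by the supporting-hyperplane characterization of faces.
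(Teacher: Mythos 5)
Your argument is correct, and it is essentially the paper's (un)proof made explicit: the paper dismisses \Cref{Fact:Linearity} with the remark that it ``follows by the definition of a norm, combined with the property that the unit ball $B$ is a polyhedron,'' and your convexity-of-faces argument for the ``if'' direction together with the supporting-hyperplane argument for the ``only if'' direction is exactly the standard convex-geometric elaboration of that remark (note that nondegeneracy of the norm, valid here since $M$ is hyperbolic, is what lets you divide by $\|x\|$ and $\|y\|$). No issues to flag.
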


This follows by the definition of a norm, combined with the property that the unit ball $B$ is a polyhedron.

\begin{fact}\label{Fact:Mod2}
If $S$ represents $x\in H_2(M,\partial M;\ZZ)$, then $\|x\|\equiv -\chi(S) \mod 2$. 
\end{fact}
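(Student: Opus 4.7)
The plan is to show that the parity of $-\chi(S)$ depends only on the homology class $x$; applied to any norm-minimizing representative $S_0$ (for which $\|x\| = -\chi(S_0)$), this yields the fact. The argument will rest on two observations: first, a purely topological identity $-\chi(S) \equiv b(S) \pmod{2}$, where $b(S)$ denotes the total number of boundary circles of $S$ in $\partial M$; and second, the fact that $b(S) \bmod 2$ is determined by $x$.

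For the first observation, I would use that any connected compact orientable surface $F$ of genus $g$ with $b$ boundary circles satisfies $-\chi(F) = 2g + b - 2 \equiv b \pmod 2$. Summing this relation over the connected components of $S$ yields $-\chi(S) \equiv b(S) \pmod 2$. For the second observation, I would analyze how $\partial S$ meets each boundary torus $T_i$ of $M$. After capping off any inessential components of $\partial S \cap T_i$ (each bounding a disk in $T_i$), the remaining essential boundary circles are pairwise disjoint essential simple closed curves on the torus $T_i$, hence all parallel to a single primitive slope $\alpha_i$. The number $b_i$ of parallel copies is then determined by the relation $b_i[\alpha_i] = \partial x|_{T_i} \in H_1(T_i;\mathbb{Z})$, so $b(S) = \sum_i b_i$ depends only on $x$.

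Combining the two observations gives $-\chi(S) \equiv b(S) \equiv b(S_0) \equiv -\chi(S_0) = \|x\| \pmod{2}$ for every admissible $S$. The main obstacle is the capping step: absorbing an inessential boundary disk into $S$ may create a disk or sphere component that must be discarded in order to stay inside the class of surfaces allowed in the definition of $\|\cdot\|$, and one needs to verify that these modifications preserve the parity relation. In a hyperbolic $3$--manifold with toroidal boundary this is routine, since $M$ is irreducible and $\pi_1(\partial M)\hookrightarrow \pi_1(M)$ forces any disk component to have inessential boundary and any sphere component to bound a ball. A careful bookkeeping of each reduction move shows that $-\chi(S)$ and $b(S)$ always change by the same integer, so the congruence $-\chi(S) \equiv b(S) \pmod{2}$ is preserved as one passes from an arbitrary representative to a reduced one with only essential boundary, completing the argument.
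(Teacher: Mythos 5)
Your argument is essentially the paper's: the proof in the text likewise reduces the parity of $-\chi(S)$ to the parity of the number of boundary circles of $S$, which is controlled by the image $\partial x \in H_1(\partial M;\ZZ)$ under the boundary map. The only difference is that you spell out the per-torus counting and the cleanup of inessential boundary components, which the paper leaves implicit, so no further changes are needed.
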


This follows from the existence of the boundary map $\bdy \from H_2(M,\partial M;\ZZ)\to H_1(\partial M,\ZZ)$, and the fact that the number of components of an embedded multi-curve representing an element of $H_1(\partial M,\ZZ)$ is invariant mod 2. 


\subsection{Finding desired fibers}
The following lemma will be used to find fibers of almost every topological type.

\begin{lemma}
\label{lem:thurston_norm}
Let $M$ be a one-cusped fibered hyperbolic manifold with $H_2(M,\bdy M;\RR) \cong \RR^2$, and suppose $M$ contains embedded surfaces $S_1 \cong \Sigma_{1,1}$ and $S_2 \cong \Sigma_{2,0}$ representing non-trivial classes in $H_2(M,\bdy M)$. 

\begin{enumerate}
\item If $M$ has a fiber $F \cong \Sigma_{2,1}$, then the vertices of the unit Thurston norm ball are $\pm [S_1]$ and $\pm\frac{1}{2}[S_2]$. Furthermore, the fibered cone containing $F$ also contains fibers homeomorphic to $\Sigma_{g,n}$ for all $g \geq 2$ and $n \geq 1$ such that $(g-1,n)$ are relatively prime. All of these fibers have the same boundary slope as $F$.

\smallskip

\item If $M$ has a fiber $F \cong \Sigma_{1,2}$, then the vertices of the unit Thurston norm ball are $\pm [S_1]$, and either $\pm([S_2]+[S_1])$ or $\pm([S_2]-[S_1])$. Furthermore, the fibered cone containing $F$ also contains fibers homeomorphic to $\Sigma_{1,n}$ for all $n \geq 2$. All of these fibers have the same boundary slope as $F$.

\end{enumerate}
\end{lemma}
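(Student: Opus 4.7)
My plan is to combine the two Thurston norm facts (\Cref{Fact:Linearity,Fact:Mod2}) with boundary-homology bookkeeping to pin down $[F]$ and the fibered face containing it, and then to derive the list of fiber topologies by arithmetic. Both cases follow the same template; I will describe Case (1) in detail. Because $M$ is hyperbolic and one-cusped, it is atoroidal and anannular, so the Thurston norm is non-degenerate on nonzero integer classes. Combining $\|[S_i]\|\le-\chi(S_i)$ with the mod-$2$ congruence of \Cref{Fact:Mod2} yields $\|[S_1]\|=1$ and $\|[S_2]\|=2$. The boundary map $\partial\colon H_2(M,\partial M)\to H_1(\partial M)$ sends $[S_1]$ to a primitive class and $[S_2]$ to zero, so $[S_1],[S_2]$ are linearly independent and form a basis of $H_2(M,\partial M;\mathbb{R})$; a short primitivity argument also shows $[S_2]$ is primitive in the integer lattice. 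Since $F$ has one boundary component, $\partial[F]$ is primitive, so (after reorienting $S_1$) $\partial[F]=\partial[S_1]$, hence $[F]=[S_1]+b[S_2]$ for some $b\in\mathbb{Z}$. Using $\|[F]\|=-\chi(F)=3$ and the reverse triangle inequality, $|b|\in\{1,2\}$.

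The key step is excluding $|b|=2$. Suppose $[F]=[S_1]+2[S_2]$. Then $\|[F]-[S_1]\|=4=\|[F]\|+\|[S_1]\|$, and \Cref{Fact:Linearity} places $[F]$ and $-[S_1]$ in the closure of a common cone over a face of $B$, which (because $[F]$ is interior to a two-dimensional fibered cone $C$) must be $C$ itself. In parallel, $\|[F]+[S_1]\|=2\|[S_1]+[S_2]\|$, and combining the triangle inequality ($\|[S_1]+[S_2]\|\le 2$) with the parity $\|[S_1]+[S_2]\|\equiv -\chi(S_1\sqcup S_2)\equiv 1\pmod{2}$ (via \Cref{Fact:Mod2}) forces $\|[S_1]+[S_2]\|=1$; then \Cref{Fact:Linearity} puts $[S_1]+[S_2]$ in the closure of $C$ as well. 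Because $-[S_1]$ and $[S_1]+[S_2]$ are linearly independent, the positive sub-cone $C'$ they generate is two-dimensional and contained in $C$, so its interior lies in the interior of $C$. The class $[S_2]=(-[S_1])+([S_1]+[S_2])$ lies in the interior of $C'$, hence in the interior of $C$, so $[S_2]$ would be a primitive integer class in a fibered cone and therefore represented by a connected fiber of some fibration $M\to S^1$. Since $[S_2]$ is closed and every fiber of $M\to S^1$ meets the cusp torus in a nonempty multicurve, this is a contradiction. So $|b|=1$, and after orienting $S_2$ we have $[F]=[S_1]+[S_2]$.

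The remainder is now largely formal. From $\|[F]\|=\|[S_1]\|+\|[S_2]\|$ and \Cref{Fact:Linearity}, $[S_1]$ and $[S_2]$ lie in the closure of $C$; the same no-closed-fiber principle forces $[S_2]$ onto a boundary ray; and an entirely analogous cone-extension calculation in the symmetric direction (produce a hypothetical primitive-integer vertex $u_+=\gamma[S_1]-k[S_2]$ past $[S_1]$, use linearity of the norm on $C$ to solve for $\|u_+\|=\gamma-2k$, and observe that the resulting positive integer combinations again trap a closed class in the interior) pins $[S_1]$ onto the other boundary ray. Thus $C=\{p[S_1]+q[S_2]:p,q>0\}$. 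Primitive integer classes in $C$ are $x_{p,q}=p[S_1]+q[S_2]$ with $\gcd(p,q)=1$; such a class has $p$ boundary components, all on the slope $\partial[S_1]=\partial[F]$, and norm $p+2q$, so the fiber is $\Sigma_{g,n}$ with $n=p$ and $g=1+q$, ranging exactly over $g\ge 2$, $n\ge 1$, $\gcd(g-1,n)=1$. The vertex description of $B$ follows because the fibered face of $C$ is the segment from $[S_1]$ to $\tfrac{1}{2}[S_2]$, the opposite face is its negation, and the two remaining edges of the parallelogram contain no primitive integer classes in their interiors. Case (2) goes through with the substitutions $-\chi(F)=2$, $\partial[F]=\pm 2\,\partial[S_1]$, and $[F]=2[S_1]\pm[S_2]$; the primitive norm-one class $[S_1]\pm[S_2]$ plays the role of the second vertex, and the two subcases in the lemma correspond to the two signs. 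I expect the main obstacle to be the second application of the closed-fiber trick---pinning $[S_1]$ onto a boundary ray---which requires the coordinate calculation to produce the forbidden closed class from the hypothetical extra vertex.
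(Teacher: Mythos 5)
Much of your scheme is sound and close in spirit to the paper's: the norms $\|[S_1]\|=1$, $\|[S_2]\|=2$, the identity $[F]=[S_1]+b[S_2]$, the exclusion of $|b|=2$ (the additivity you need there is $\|([S_1]+[S_2])+[S_2]\|=\|[S_1]+[S_2]\|+\|[S_2]\|$, which does hold, so \Cref{Fact:Linearity} applies), and the construction of the fibers $n[S_1]+(g-1)[S_2]$, which in fact only requires $[S_1],[S_2]\in\overline{\RR_+\face}$ rather than the full vertex statement. The genuine gap is in the step that pins $[S_1]$ onto a boundary ray of the fibered cone (and, in Case (2), pins $[S_1]\pm[S_2]$ as the second vertex). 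If $[S_1]$ lay in the open cone and the second boundary ray were spanned by a primitive integral class $u_+=\gamma[S_1]-k[S_2]$ with $\gamma,k>0$, then every positive combination of $u_+$ and $[S_2]$ has $[S_1]$--coefficient $s\gamma>0$, so \emph{no} multiple of the closed class $[S_2]$ lies in the interior of that cone: nothing is ``trapped,'' and the contradiction you invoke simply does not materialize. Knowing $\|u_+\|=\gamma-2k$ by linearity is not by itself contradictory either (e.g.\ $\gamma=3$, $k=1$ passes all the norm, parity and boundary constraints you have put in play up to that point).

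The configuration can be excluded, but it needs an input you have not supplied: information about $\|[S_1]-[S_2]\|$ (resp.\ $\|[S_1]\pm[S_2]\|$ in Case (2)) and a convexity argument that uses the closed class on the face \emph{adjacent} to the putative vertex, not inside the fibered cone. Concretely, in the hypothetical configuration one checks that $\frac13([S_1]-[S_2])$ lies in the interior of the convex hull of $\pm\frac12[S_2]$ and $\pm u_+/(\gamma-2k)$, so $\|[S_1]-[S_2]\|<3$, hence $=1$ by \Cref{Fact:Mod2}; then the segment from $-[S_1]$ to $[S_1]-[S_2]$, whose midpoint is $-\frac12[S_2]$, lies in $\bdy B$, and one gets a contradiction either because $[F]$ lies in the cone over that face (trapping $[S_2]$ in a fibered cone) or because $\bdy[F]=\pm l$ together with linearity forces $\|[F]\|\le 1$. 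This is exactly the paper's route: it first computes $\|[S_1]+[S_2]\|$ and $\|[S_1]-[S_2]\|$ (both equal $3$ in Case (1); exactly one equals $1$ in Case (2)) and only then reads off the vertices of $B$; the ``no closed fiber'' principle is applied to the neighboring face through $\pm\frac12[S_2]$, which is what creates the corner at $[S_1]$. A smaller point: your parity claim $\|[S_1]+[S_2]\|\equiv-\chi(S_1\sqcup S_2)$ needs an embedded representative, so either resolve $S_1\cap S_2$ by oriented double-curve sum or argue via $\bdy([S_1]+[S_2])=\bdy[S_1]$ being primitive.
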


\begin{proof}

Let $x_1=[S_1]$ and $x_2=[S_2]$. Since $\chi(\Sigma_{1,1})=-1$ and $\|x_1\| > 0$, we conclude that $\|x_1\| = 1$. Similarly, since $\chi(\Sigma_{2,0})=-2$ and $\| x_2 \| > 0$, \Cref{Fact:Mod2} implies that $\|x_2 \| = 2$. Thus the four classes  $\pm x_1$ and $\pm\frac{1}{2}x_2$ all lie in $\bdy B$, where $B$ is the unit ball of the norm.

Recall the boundary homomorphism  $\bdy \from H_2(M,\bdy M)\to H_1(\bdy M)$, and fix the homology class $l = \bdy x_1 \in H_1(\partial M ; \ZZ)$. Note that $\pm l$ are the unique primitive classes in $H_1(\bdy M; \ZZ)$  that are trivial in $H_1(M)$. In particular, this implies that any alternate fiber $F'$ must have the same boundary slope as $F$.

Observe that, $x_1 \neq x_2$ because $\bdy x_1 = l \neq 0 = \bdy x_2 \in H_1(\partial M ; \ZZ)$.
Now, consider  classes $x'=x_1+x_2$ and $x''=x_1-x_2$. Since $\|x'\|\le \|x_1\|+\|x_2\|=3$ and $\|x'\|\equiv \chi (\Sigma_{1,1})+\chi(\Sigma_{2,0}) \mod 2$, we have $\|x'\|\in \{1,3\}$. Similarly, $\|x''\|\in\{1,3\}$.

\medskip

\begin{figure}
       	\centering
       	\begin{subfigure}{0.33\textwidth}
       	\centering
              	\includegraphics[scale=.64]{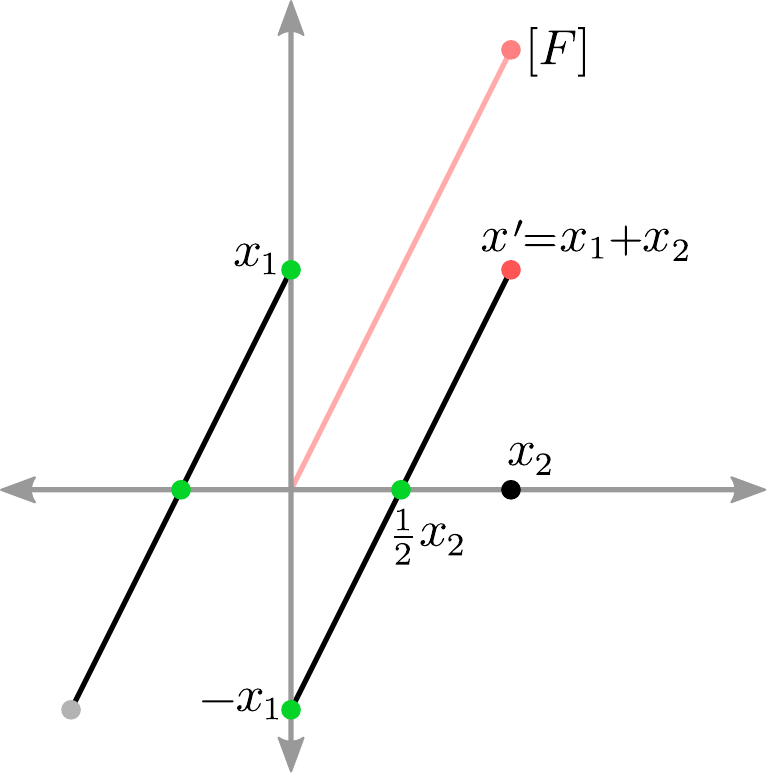}
          	\caption{}
        	\label{fig:H2_case1_n}
      	\end{subfigure}
       	\begin{subfigure}{0.32\textwidth}
       	\centering
            	\includegraphics[scale=.64]{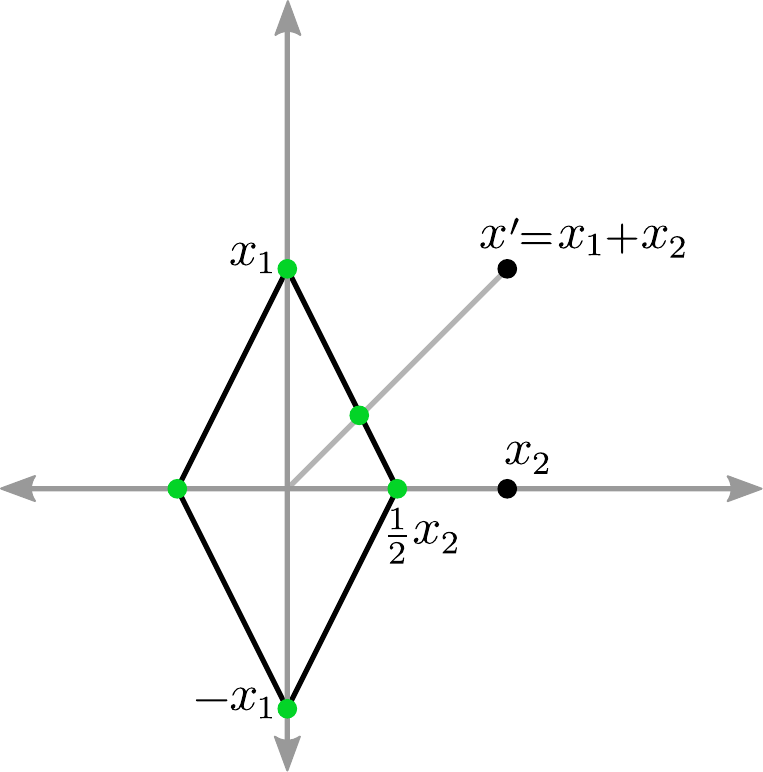}
          	\caption{}
         	\label{fig:H2_case1_y}
      	\end{subfigure}
      	\begin{subfigure}{0.33\textwidth}
      	\centering
            	\includegraphics[scale=.64]{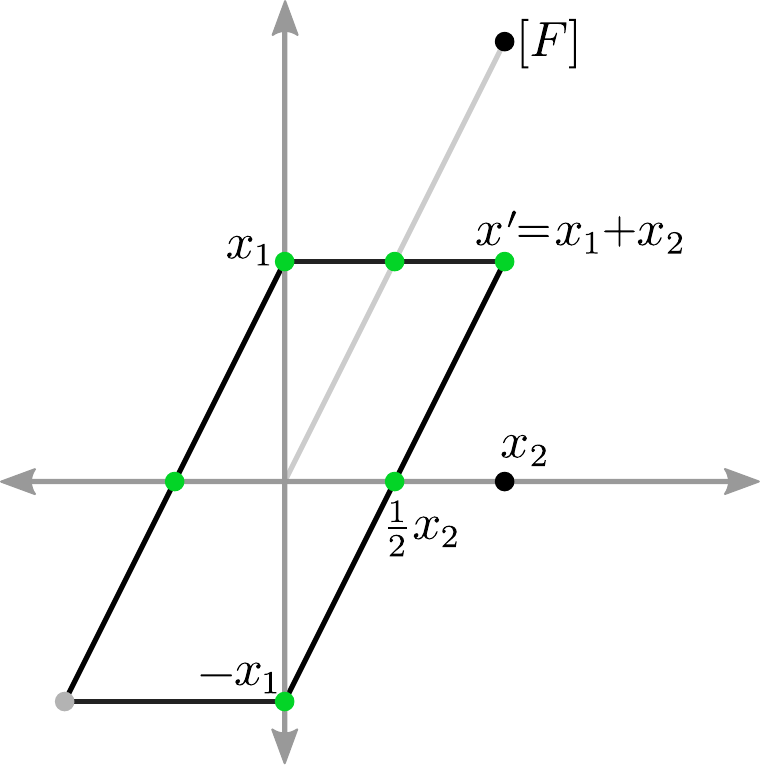}
          	\caption{}
         	\label{fig:H2_case2}
      	\end{subfigure}       
 	\caption{(a) For the case $F \cong \Sigma_{2,1}$, the assumption $\|x'\|=1$ leads to a contradiction. (b) When $F \cong \Sigma_{2,1}$, the Thurston norm ball is the rhombus shown. (c) When $F\cong \Sigma_{1,2}$, the Thurston norm ball  is either the polygon shown, or the mirror image of this polygon across the vertical axis.}
  	\label{fig:H2_ball}
\end{figure}

 \textbf{Case 1:} $M$ has a fiber  $F \cong\Sigma_{2,1}$, which implies $\bdy [F] = \pm l$. 

Suppose for a contradiction that $\|x'\|=1$. Then, by \Cref{Fact:Linearity}, the points $-x_1$, $x'$, and their average $\frac{1}{2}x_2$ all lie in a line segment contained in a face of $\bdy B$. Since $M$ has a cusp, and the interior point $\frac{1}{2}x_2$ is represented by (half) the closed surface $S_2 \cong \Sigma_{2,0}$, this cannot be a fibered face. Similarly, the points $x_1, \frac{-1}{2}x_2, -x' \in \bdy B$ all lie in a line segment in a non-fibered face of $\bdy B$. (See \Cref{fig:H2_case1_n}.)
 It follows that the fiber $F$ lies in a cone over some other face, which we may assume is in the first quadrant by changing the orientation of $F$ if necessary. Hence we can write $[F]=ax_1+bx'$, for some $a,b\in \QQ_{>0}$. 
%
Since $x'=x_1+x_2$, we have
\[
 \bdy x'=\bdy x_1+\bdy x_2=\bdy x_1 =l \in H_1(\bdy M),
\]
implying
\[ 
\pm l=\bdy [F] = \bdy (ax_1+bx')=(a+b) l. 
\]
Hence $a+b=\pm 1$, but $a+b\ge 0$, so we must have $a+b=1$. It follows that $[F]$ lies on the line segment joining $x_1$ and $x'$, which is impossible since $\|[F]\|=3$. From this contradiction, it follows that $\|x'\|=3$. 

If we consider  $x''=x_1-x_2$ in place of $x'$, and assume that $\|x''\|=1$, then the argument above with the obvious modifications again gives a contradiction. Hence $\|x_1-x_2\|=3=\|x_2-x_1\|$. Therefore the points $\pm x_1$, $\pm\frac{1}{2}x_2$, $\pm\frac{1}{3}x'$, $\pm\frac{1}{3}x''$ all have norm 1, hence \Cref{Fact:Linearity} implies that these points determine the unit norm ball. It follows that the vertices are $\pm v_1, \pm v_2$, where $v_1 = x_1 = [S_1]$ and $v_2= \frac{1}{2}x_2 =  \frac{1}{2}[S_2]$. See \Cref{fig:H2_case1_y}. 

Now, let $\face$ be the face containing $F$. Without loss of generality, $\face$ has vertices
$\{v_1, v_2 \}$. Fix a pair $(g,n)$ where $g\geq 2$ and $n \geq 1$, and where $\gcd(g-1,n)=1$. 
Then $y=n x_1+ (g-1)x_2$ is a primitive homology class in $\face$, which is represented by a fiber $F'$. Since the norm is linear on faces by \Cref{Fact:Linearity}, $\|y\|= n \|x_1\|+(g-1) \|x_2\|=n+2(g-1)$. Furthermore, $F'$ has exactly $n$ boundary components, since $S_2$ is closed and $S_1$ has one boundary component. Thus 
\[
2(g-1) + n =\|y\|=-\chi(F')=2\, \mathrm{genus}(F')-2+n
\]
hence $\mathrm{genus}(F')=g$, as desired. 

\medskip

\textbf{Case 2:} $F\cong \Sigma_{1,2}$, which implies $\bdy[F] = \pm 2l$.  

Suppose, for a contradiction, that $\|x'\|= \|x''\|=3$. Then the points $\pm x_1$, $\pm\frac{1}{2}x_2$, $\pm\frac{1}{3}x'$, $\pm\frac{1}{3}x''$ all have norm 1, and determine the unit norm ball must be as shown in \Cref{fig:H2_case1_y}. Hence, up to changing signs, we may assume that $[F]=ax_1+b(\frac{1}{2}x_2)$ for some $a,b\in\QQ_+$.
Then
\[
\pm 2l =\bdy [F]=\bdy \big( ax_1+b(\tfrac{1}{2}x_2) \big)=al 
\quad \implies \quad  a=\pm 2.
\]
Since the Thurston norm is linear in the cone over a face, we have 
$$
2=\|[F]\|=\|ax_1+b(\tfrac{1}{2}x_2)\|=|a|+|b|=2+|b|
\quad \implies \quad b=0,
$$
which is impossible, because the fiber $F$ must be in the interior of a fibered cone. 
This contradiction implies that either $\|x'\|=1$ or $\|x''\|=1$. 

If $\|x'\|=1$, the unit sphere $\bdy B$ contains the segments connecting $\pm x'$ and $\mp x_1$, as shown in \Cref{fig:H2_case2}.
As above, we observe that $\frac{1}{2}x_2$ cannot lie in the interior of a fibered face, so (after possibly reversing the orientation on $F$) we must have $[F]=ax_1+bx'$
for  $a,b\in\QQ_+$.
Applying the boundary homomorphism gives
 \[
 2l =\bdy [F]=\bdy(ax_1+bx')=(a+b)l
 \]
which implies
 \[
 a+b = 2=\|[F]\|\le a\|x_1\|+b\|x'\|=a+b.
 \]
  Since the norm is only linear in the cone over a face (\Cref{Fact:Linearity}), the segment joining $x_1$ to $x'$ must lie in a face of $\bdy B$. It follows that $\pm x_1,\pm x'$ are the only vertices. 
  
If $\|x''\|=1$, an identical argument applies with $x'$ replaced by $x''$. In this case, the vertices of $\bdy B$ are $\pm x_1,\pm x''$.
Thus, in both cases, the vertices of the unit  norm ball are $\pm v_1$ and $\pm v_2$, where $v_1 = [S_1]$, and $v_2$ is either $[S_2]+[S_1]$ or $[S_2]-[S_1]$.

Now, let $\face$ be the face containing $F$. Without loss of generality, say $v_2 = [S_1] + [S_2]$ and $\face$ has vertices
$\{v_1, v_2\}$. The norm-realizing surface $P$ representing $v_2$ has $\chi(P) = -1$ and $\bdy[P] = l$. Thus $P$ is either a pair of pants or a one-holed torus. If $P$ is a pair of pants, then two boundary components of $P$ must cancel in $H_1(\bdy M)$, which means they can be tubed together to obtain an embedded one-holed torus. Thus, in either case,  $v_2$ is represented by an embedded $\Sigma_{1,1}$. Fix an integer $n \geq 2$, and let
$y = v_1 + (n-1) v_2 = n[S_1] + (n-1)[S_2]$. As before, $y$ is primitive and therefore represented by a fiber $F'$. Since the Thurston norm is linear on the fibered cone, $\|y\|=\|v_1\|+(n-1)\|v_2\|= n$. Furthermore, since $\partial S_2 = \emptyset$, $F'$ must have exactly $n$ boundary components. This gives that $n = \|y\| = -\chi(F')=2g(F')-2+n$ which implies  $g(F')=1$. We conclude that $F' \cong \Sigma_{1,n}$, as required. 
%
%
\end{proof}

Before proving \Cref{Prop:NongeomExists}, we need a straightforward lemma about covers.

\begin{lemma}
\label{lem:lift_to_covers}
Let $\varphi \colon S \to S$ be a pseudo-Anosov homeomorphism and $f \colon \widehat S \to S$ a degree $d < \infty$ covering. Then the following holds.
\begin{enumerate}
\item\label{Itm:PowerLifts} There exists a pseudo-Anosov $\widehat \varphi : \widehat{S}\to \widehat{S}$ that is a lift of some power $\varphi^k$ of $\varphi$.
\item\label{Itm:TriLifts} The veering triangulation $\tri_{\varphi}$ is a geometric triangulation of $\mathring{M}_{\varphi}$ if and only if $\tri_{\widehat{\varphi}}$ is a geometric triangulation of $\mathring{M}_{\widehat{\varphi}}$. 
\item\label{Itm:PrincipalLifts} If $\varphi$ is principal and each peripheral curve of $S$ has $d$ lifts to $\widehat{S} $, then $\widehat{\varphi}$ is also principal.
\end{enumerate}
\end{lemma}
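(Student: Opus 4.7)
The plan is to prove each part in sequence, with the main conceptual work concentrated in (\ref{Itm:PrincipalLifts}).

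For part~(\ref{Itm:PowerLifts}), I would use the standard fact that $\pi_1(S)$ has only finitely many index-$d$ subgroups, and so only finitely many conjugacy classes of such subgroups. The cover $f$ corresponds to the conjugacy class of $H = f_*(\pi_1 \widehat S)$, and $\varphi$ induces a permutation on the set of such conjugacy classes. Taking $k$ to be the order of this permutation applied to $[H]$, the power $\varphi^k$ preserves $[H]$ and therefore admits a lift $\widehat \varphi \colon \widehat S \to \widehat S$. Pulling back the $\varphi^k$--invariant foliations $\fol^\pm$ via $f$ yields $\widehat\varphi$--invariant singular foliations on $\widehat S$ with the same dilatation $\lambda_{\varphi}^k > 1$, so $\widehat \varphi$ is pseudo-Anosov.

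For part~(\ref{Itm:TriLifts}), I would appeal to the naturality of veering triangulations under finite covers, which follows directly from Gu\'eritaud's construction (\Cref{Sec:VeeringBackground}). The covering $f$ extends to a finite covering of punctured mapping tori $\mathring M_{\widehat\varphi} \to \mathring M_{\varphi^k}$, which in turn covers $\mathring M_\varphi$ via the cyclic cover of degree $k$. The $\varphi$--invariant quadratic differential $q$ on $\mathring S$ pulls back to the $\widehat\varphi$--invariant quadratic differential $\widehat q$ on $\widehat{\mathring S}$, and maximal singularity-free rectangles of $\widehat q$ are precisely lifts of those of $q$. Hence $\tri_{\widehat \varphi}$ is the lift of $\tri_\varphi$ under the composed covering. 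Since the complete hyperbolic metric on $\mathring M_\varphi$ lifts to the complete hyperbolic metric on $\mathring M_{\widehat \varphi}$, shape parameters of tetrahedra are preserved under lifting, and so geometricity of $\tri_\varphi$ transfers to geometricity of $\tri_{\widehat\varphi}$ and back.

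For part~(\ref{Itm:PrincipalLifts}), the argument reduces to a local analysis at singularities of the lifted foliations. In the interior of $\widehat S$, the covering $f$ is unramified, so each interior singularity of $\widehat q$ has the same prong count as its image singularity of $q$; in particular $3$--prongs lift to $3$--prongs. At a puncture $\widehat p$ of $\widehat S$ over a puncture $p$ of $S$, the restricted covering is locally modeled by $z \mapsto z^e$ for some ramification index $e \geq 1$, so a $k$--prong singularity at $p$ (locally modeled by $z^{k-2}dz^2$) pulls back to an $(ek)$--prong singularity at $\widehat p$. The hypothesis that each peripheral curve of $S$ has exactly $d$ lifts is equivalent to saying each puncture $p$ has exactly $d$ preimages; combined with the identity $\sum e_i = d$ and the inequalities $e_i \geq 1$, this forces $e_i = 1$ at every preimage. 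Therefore $1$--prongs at punctures lift to $1$--prongs, and $\widehat \varphi$ is principal.

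The main obstacle is justifying the naturality claim in (\ref{Itm:TriLifts}): although it is essentially built into Gu\'eritaud's construction, one must take care that the quadratic differentials, their maximal singularity-free rectangles, and the associated tetrahedra transform consistently under the two-step covering $\mathring M_{\widehat\varphi} \to \mathring M_{\varphi^k} \to \mathring M_\varphi$ whose intermediate manifolds are topologically distinct from both endpoints.
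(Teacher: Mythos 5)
Your proposal is correct and follows essentially the same route as the paper: part (1) via the finiteness of index-$d$ subgroups and the lifting criterion, part (2) via the observation that the veering triangulation and the complete hyperbolic structure both lift through the covering $\mathring M_{\widehat\varphi} \to \mathring M_{\varphi^k} \to \mathring M_\varphi$, so each tetrahedron upstairs has the same shape as its image downstairs, and part (3) via the fact that unramified behavior at punctures (forced by the $d$-lifts hypothesis) preserves $1$-prongs while interior $3$-prongs automatically lift to $3$-prongs. The paper states these steps more tersely; your local model $z \mapsto z^e$ and the rectangle-level naturality check are just explicit versions of what the paper leaves implicit.
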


\begin{proof}
Conclusion \eqref{Itm:PowerLifts} is standard. Let $d$ be the degree of the cover. Then the finitely many index $d$ subgroups of $\pi_1(S)$  are permuted by the induced isomorphism $\varphi_\ast$. Thus some power of $\varphi_\ast$ must stabilize the subgroup $f_\ast \pi_1 (\widehat S) \subset \pi_1 (S)$, allowing the lifting criterion to be applied. 

Conclusion \eqref{Itm:TriLifts} follows from the fact that every simplex in the veering triangulation $\tri_{\varphi}$ of $\mathring{M}_{\varphi}$ lifts to a simplex in the veering triangulation $\tri_{\widehat{\varphi}}$ of $\mathring{M}_{\widehat{\varphi}}$, with the same shape.

For conclusion \eqref{Itm:PrincipalLifts}, note that since $\varphi$ is principal, every singularity of $\varphi$ is either $3$--pronged and occurs at an interior point of $S$ or $1$--pronged and occurs at a puncture. Since each peripheral curve of $S$ has $d$ lifts to $\widehat S$, the same is true for $\widehat \varphi$. Thus $\widehat \varphi$ is principal.
\end{proof}

We can now begin proving \Cref{Prop:NongeomExists}, case by case.

\begin{proposition}\label{Prop:NonGeomPosGenus}
Let $S \cong \Sigma_{g,n}$ be a hyperbolic surface of genus $g \geq 1$, excluding $\Sigma_{1,1}$. Then  there exists a principal pseudo-Anosov $\varphi \in \Mod(S)$ such that the associated veering triangulation of the mapping torus $\mathring M_{\varphi}$ is non-geometric.
\end{proposition}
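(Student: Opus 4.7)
The plan is to bootstrap from the finitely many explicit examples in \Cref{prop:seeds} to all surfaces of genus $g \ge 1$ (excluding $\Sigma_{1,1}$) using the Thurston norm (\Cref{lem:thurston_norm}) to populate fibered cones and \Cref{lem:lift_to_covers} to pass to finite covers. First I would partition into four cases: (a) $g = 1$, $n \ge 2$; (b) $g = 2$, $n = 0$; (c) $g \ge 2$, $n \ge 1$; and (d) $g \ge 3$, $n = 0$.

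Case (b) is handled directly by $\varphi_1$ from \Cref{prop:seeds}. For case (c), I would apply \Cref{lem:thurston_norm}(1) to $M = M_{\phicirc_2}$: assuming one verifies by computer (using \texttt{Regina}) that $M$ is one-cusped, that $\dim H_2(M,\bdy M;\RR) = 2$, and that $M$ contains embedded surfaces homeomorphic to $\Sigma_{1,1}$ and $\Sigma_{2,0}$, the lemma produces, in the fibered cone containing $\Sigma_{2,1}$, fiber surfaces of every type $\Sigma_{g',n'}$ with $g' \ge 2$, $n' \ge 1$, and $\gcd(g'-1, n') = 1$, all sharing the boundary slope of $\Sigma_{2,1}$. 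By \Cref{Lem:FaceInvariance} these fibers produce the same veering triangulation of $\mathring{M}_{\varphi_2}$, which is non-geometric; and by \Cref{Lem:DegeneracySlope}, since $\varphi_2$ is principal, each alternate monodromy in the cone is principal too (the degeneracy slope depends only on the cone, and the boundary slope is shared). When $\gcd(g-1,n) = d > 1$, set $g' = (g-1)/d + 1$ and $n' = n/d$, so that $\gcd(g'-1,n') = 1$; Riemann--Hurwitz and standard $\ZZ/d$ cyclic cover constructions (using any homomorphism $\pi_1(\Sigma_{g',n'}) \to \ZZ/d$ that kills every peripheral element) produce an unbranched degree-$d$ cover $\Sigma_{g,n} \to \Sigma_{g',n'}$ in which each puncture has exactly $d$ preimages. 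Then \Cref{lem:lift_to_covers} lifts an appropriate power of the principal pseudo-Anosov from $\Sigma_{g',n'}$ to a principal pseudo-Anosov $\widehat{\varphi}$ on $\Sigma_{g,n}$ whose veering triangulation remains non-geometric.

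Case (a) is analogous: verify (by computer) that $M_{\phicirc_3}$ satisfies the hypotheses of \Cref{lem:thurston_norm}(2), apply the lemma to obtain fibers $\Sigma_{1,n}$ for every $n \ge 2$ in a single fibered cone, and conclude principality and non-geometricity exactly as above. For case (d), I would note that $\Sigma_{g,0}$ is an unbranched cyclic cover of $\Sigma_{2,0}$ of degree $g-1$ for every $g \ge 3$ (Riemann--Hurwitz gives $2g-2 = (g-1)\cdot 2$), and since $\Sigma_{2,0}$ has no punctures the hypothesis of \Cref{lem:lift_to_covers}(3) is vacuous, so lifting a power of $\varphi_1$ yields the required principal pseudo-Anosov on $\Sigma_{g,0}$ with non-geometric veering triangulation.

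The main obstacle will be verifying the Thurston-norm hypotheses on $M_{\phicirc_2}$ and $M_{\phicirc_3}$: namely, exhibiting the embedded $\Sigma_{1,1}$ and $\Sigma_{2,0}$ inside these three-manifolds and certifying that $H_2(M,\bdy M;\RR) \cong \RR^2$. This is a finite computer check (using \texttt{Regina} to compute homology and search for normal surface representatives) rather than a conceptual difficulty, and its output would be recorded alongside the ancillary data for \Cref{prop:seeds}. Everything else reduces to the three combinatorial lemmas \ref{Lem:DegeneracySlope}, \ref{Lem:FaceInvariance}, and \ref{lem:thurston_norm}, together with the elementary covering-space constructions above.
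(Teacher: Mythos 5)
Your overall strategy is the paper's: use the seeds of \Cref{prop:seeds}, populate a fibered cone via \Cref{lem:thurston_norm}, transfer principality and non-geometricity with \Cref{Lem:DegeneracySlope} and \Cref{Lem:FaceInvariance}, and reach the remaining surfaces by finite covers via \Cref{lem:lift_to_covers}. The minor variations are harmless: the paper handles all closed surfaces at once by noting every closed hyperbolic surface finitely covers $\Sigma_{2,0}$ (your cyclic covers of degree $g-1$ do the same job), and your algebraic construction of the degree-$d$ cover killing peripheral elements is equivalent to the paper's explicit symmetric picture.

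There is, however, one concrete misstep: you apply \Cref{lem:thurston_norm} to the \emph{drilled} mapping tori $M_{\phicirc_2}$ and $M_{\phicirc_3}$, and you propose to verify the lemma's hypotheses there. That step fails as written. Since $\varphi_2$ and $\varphi_3$ are principal, they have interior $3$--prong singularities (e.g.\ seven of them for $\varphi_2$ on $\Sigma_{2,1}$, by the Euler--Poincar\'e formula), so drilling the singular orbits produces a manifold with at least two cusps and with $H_2(M,\bdy M;\RR)$ of rank larger than $2$; the one-cusped hypothesis is violated outright. Moreover, the fibers of $M_{\phicirc_2}$ are the punctured surfaces (e.g.\ $\Sigma_{2,8}$, not $\Sigma_{2,1}$), so ``the fibered cone containing $\Sigma_{2,1}$'' does not exist in $M_{\phicirc_2}$, and monodromies of fibers of the drilled manifold have all singularities at punctures (many of them $3$--pronged), hence are never principal pseudo-Anosovs of the surfaces $\Sigma_{g,n}$ you need. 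The correct move, which is what the paper does, is to run the Thurston-norm argument in the \emph{undrilled} mapping tori $M_{\varphi_2}$ and $M_{\varphi_3}$: these are one-cusped (for $\varphi_3$ one must note that it transposes the two punctures of $\Sigma_{1,2}$, so the mapping torus has a single cusp), \texttt{Regina} certifies $H_2 \cong \RR^2$ together with embedded copies of $\Sigma_{1,1}$ and $\Sigma_{2,0}$, and the fibers produced in the cone are honest surfaces $\Sigma_{g,n}$ with principal monodromy by \Cref{Lem:DegeneracySlope}; only afterwards does one pass to the common drilled manifold, whose veering triangulation is shared by all fibers in the cone by \Cref{Lem:FaceInvariance}. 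With that substitution your argument goes through and coincides with the paper's proof.
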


\begin{proof}
We consider three different cases. \smallskip

\textbf{Case 1: $g=1$ and $n \geq 2$.} Let $F = \Sigma_{1,2}$ and let $\varphi = \varphi_3$ be the third mapping class described in \Cref{prop:seeds}. 
By \Cref{prop:seeds}, $\varphi$ is a principal pseudo-Anosov, such that the veering triangulation of $M_{\phicirc}$ is non-geometric.

Let $M_{\varphi}$ be the mapping torus of $\varphi \colon F \to F$. This manifold has a single cusp.
According to \texttt{Regina}, $M_\varphi$ contains embedded surfaces $S_1 \cong \Sigma_{1,1}$ and $S_2 \cong \Sigma_{2,0}$ which are non-trivial in $H_2(M_\varphi, \bdy M_\varphi; \RR) \cong \RR^2$. To verify this, \texttt{Regina} computes the complete list of embedded vertex normal surfaces for $M_\varphi$. (See e.g.\ \cite{Burton:optimizing} for a discussion of vertex normal surfaces and the role they play in computation.) Among these vertex normal surfaces are  $S_1 \cong \Sigma_{1,1}$ and $S_2 \cong \Sigma_{2,0}$. Cutting $M_\varphi$ along these surfaces ensures that they are homologically non-trivial.  The dimension of the homology is also rigorously computed by \texttt{Regina}. See the ancillary files \cite{FTW:Auxiliary} for full details.

Thus, by \Cref{lem:thurston_norm},  the fibered cone containing $F$ also contains fibers homeomorphic to $\Sigma_{1,n}$ for all $n \geq 1$. All of these fibers have the same boundary slope as $F$, hence 
 the mapping classes of these fibers are all principal by  \Cref{Lem:DegeneracySlope}.
Finally, \Cref{Lem:FaceInvariance} says that all of these fibers
 induce the same non-geometric veering triangulation of $M_{\phicirc}$.

\smallskip

 \textbf{Case 2: $g \geq 2$ and $n=0$.} Let $F = \Sigma_{2,0}$, and let $\varphi = \varphi_1 \in \Mod(F)$ be the first mapping class described in \Cref{prop:seeds}. By that proposition, $\varphi$ is a principal pseudo-Anosov, such that the veering triangulation of $M_{\phicirc}$ is non-geometric. Now, recall that every closed hyperbolic surface $S$ is a finite cover of $F$. Thus  \Cref{lem:lift_to_covers} gives the desired result for $S$.
 
 \smallskip
 
\textbf{Case 3: $g\geq 2$ and $n \geq 1$.} Let $F = \Sigma_{2,1}$ and let $\varphi = \varphi_2$ be the second mapping class described in \Cref{prop:seeds}. 
By \Cref{prop:seeds}, $\varphi$ is a principal pseudo-Anosov, such that the veering triangulation of $M_{\phicirc}$ is non-geometric.

Let $M_{\varphi}$ be the mapping torus of $\varphi \colon F \to F$. Using \texttt{Regina}, as in Case 1, we check that  $M_\varphi$ contains embedded surfaces $S_1 \cong \Sigma_{1,1}$ and $S_2 \cong \Sigma_{2,0}$ which are non-trivial in $H_2(M_\varphi, \bdy M_\varphi; \RR) \cong \RR^2$. By \Cref{lem:thurston_norm},  the fibered cone containing $F$ also contains fibers  homeomorphic to $\Sigma_{g,n}$ for all $g \geq 2$ and $n \geq 1$, where $(g-1,n)$ are relatively prime. All of these fibers have the same boundary slope as $F$. Thus, by 
\Cref{Lem:DegeneracySlope,Lem:FaceInvariance}, we obtain the desired conclusion for all $g \geq 2$ and $n \geq 1$ such that $\gcd(g-1,n) = 1$.


Finally, suppose $S \cong \Sigma_{g,n}$, with $\gcd(g-1,n)= d > 1$. Then $g' -1 = (g-1)/d$ and $n' = n/d$ are relatively prime, with $g' \geq 2$ and $n' \geq 1$. Thus, by the above paragraph, the fibered cone of $M_\varphi$ containing $F$ also contains a fiber $F' \cong \Sigma_{g',n'}$. Observe that $S$ is a $d$--fold cyclic cover of $F'$ (realize $S$ with $d$ groups of $g' -1$ doughnut holes and $n'$ punctures, arranged symmetrically around a central doughnut hole). By construction, peripheral curves of $F'$ lift to peripheral curves of $S$.
Thus, by  \Cref{lem:lift_to_covers}, a power of the monodromy of $F'$ lifts to a principal pseudo-Anosov on $S$, and 
 the non-geometric veering triangulation of $M_{\phicirc}$ lifts to a non-geometric veering triangulation of the corresponding finite cover of $M_{\phicirc}$.
\end{proof}

\begin{proposition}\label{Prop:NonGeomZeroGenus}
Let $S \cong \Sigma_{0,n}$ be a surface of genus $g =0$, with $n \geq 5$ punctures. Then  there exists a principal pseudo-Anosov $\varphi \in \Mod(S)$ such that the associated veering triangulation of the mapping torus $\mathring M_{\varphi}$ is non-geometric.
\end{proposition}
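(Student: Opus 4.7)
The plan follows the same template as Proposition \ref{Prop:NonGeomPosGenus}, combining the three explicit examples $\varphi_5, \varphi_6, \varphi_7$ of Proposition \ref{prop:seeds} with a Thurston norm analysis of their mapping tori. For $n \in \{5, 6, 7\}$, the conclusion is immediate from Proposition \ref{prop:seeds}: the mapping class $\varphi_n$ is a principal pseudo-Anosov on $\Sigma_{0,n}$ whose veering triangulation is non-geometric.

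For $n \geq 8$, the plan is to realize $\Sigma_{0,n}$ as a fiber in a fibered cone of one of the three mapping tori $M_{\varphi_{n_0}}$, with $n_0 \in \{5,6,7\}$ chosen to depend on $n$ (likely by residue modulo some small integer dictated by the combinatorial structure of the norm balls). As in the proof of Lemma \ref{lem:thurston_norm}, I would first compute $H_2(M_{\varphi_{n_0}}, \bdy M_{\varphi_{n_0}}; \RR)$ and the Thurston norm ball using \texttt{Regina}, and identify embedded normal surfaces whose classes (together with the fiber class $[\Sigma_{0,n_0}]$) span a fibered cone $\RR_+\face$. Linearity of the norm on $\face$ and the boundary map $\bdy \colon H_2(M,\bdy M) \to H_1(\bdy M)$ then determine the Euler characteristic and the number of boundary components of the fiber $F'$ represented by any primitive integral class in $\RR_+\face$; choosing such a class appropriately gives $F' \cong \Sigma_{0,n}$.

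Once such a fiber $F'$ is produced, Lemma \ref{Lem:FaceInvariance} guarantees that the drilled mapping torus carries the same non-geometric veering triangulation already established for $\varphi_{n_0}$. To conclude that the monodromy of $F'$ is itself principal, I would invoke Lemma \ref{Lem:DegeneracySlope}: since $\varphi_{n_0}$ is principal, the boundary slope of $\Sigma_{0,n_0}$ meets the degeneracy slope once on each cusp of $M_{\varphi_{n_0}}$, and I would verify that the same one-intersection property continues to hold for the boundary slope of $F'$ computed from its homology class.

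The main obstacle is the \texttt{Regina}-assisted verification that the three mapping tori $M_{\varphi_5}, M_{\varphi_6}, M_{\varphi_7}$ collectively realize $\Sigma_{0,n}$ for every $n \geq 8$. Beyond checking the shape of each Thurston norm ball and identifying the fibered face containing the distinguished fiber, one has to ensure that the boundary-slope condition needed for principal monodromy holds for all the classes used. The specific count of three examples in Proposition \ref{prop:seeds} presumably reflects a residue-class obstruction that prevents a single example from covering every $n \geq 5$ on its own.
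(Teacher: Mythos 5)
Your high-level template does match the paper's: the cases $n=5,6$ (and $7$) come straight from \Cref{prop:seeds}, and the remaining cases come from other fibers in a fibered cone of a seed mapping torus, with \Cref{Lem:FaceInvariance} transporting non-geometricity and \Cref{Lem:DegeneracySlope} handling principality. However, the principality step in your plan has a genuine gap. The boundary slope of a new fiber $F'$ can indeed be read off from its homology class via $\bdy \colon H_2(M,\bdy M)\to H_1(\bdy M)$, but the \emph{degeneracy} slope on each cusp torus is not determined by homology, so you cannot ``verify that the one-intersection property continues to hold'' from the class of $F'$ alone; some extra geometric input is required. The paper supplies it by explicitly producing a second fiber $F_2$ in the same cone (via a Rolfsen twist that re-embeds $M_{\varphi_7}$ as a new link complement, so that the monodromy of $F_2$ is an explicit braid word), certifying with \texttt{flipper} that this monodromy is principal, and then using linearity of the functionals $\nu_i(\cdot)=\iota(\pi_i\,\bdy(\cdot),\delta_i)$ on the two-dimensional span of the relevant classes to conclude that every fiber $F_a$ in the family meets each degeneracy slope once. (Alternatively one can compute the degeneracy slopes directly with \texttt{flipper}, as the paper remarks, but either way this is an extra step your sketch omits.)

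Your structural guess is also off in a way that changes the architecture: there is no residue-class obstruction, and the paper uses only $\varphi_7$ for \emph{all} $n\ge 7$. It embeds $M_{\varphi_7}$ as the complement of a three-component link in $S^3$, exhibits explicit surfaces $S_1$ (a twice-punctured disk) and $S_2$ (a five-times-punctured disk) with $[F]=[S_1]+[S_2]$ and $\|[F]\|=\|[S_1]\|+\|[S_2]\|$, and shows that the fiber $F_a$ representing $a[S_1]+[S_2]$ is $\Sigma_{0,a+6}$ by computing $\bdy F_a$ on each cusp torus directly from the link diagram; $\varphi_5$ and $\varphi_6$ are needed only because every fiber in this cone has at least seven boundary components. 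Note too that \Cref{lem:thurston_norm} is stated for one-cusped manifolds with $H_2(M,\bdy M;\RR)\cong\RR^2$ and positive-genus surfaces, so it cannot be quoted here: $M_{\varphi_7}$ has three cusps and second homology of rank three, and the paper deliberately avoids computing the full norm ball (and avoids \texttt{Regina}) in this case, working instead inside the two-dimensional cone spanned by $[S_1]$ and $[S_2]$. Your \texttt{Regina}-based plan could plausibly be pushed through, but the degeneracy-slope bookkeeping above is the missing ingredient you would have to add.
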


\begin{proof}
If $n =5$ or $n=6$, the mapping classes $\varphi_5$ and $\varphi_6$ described in \Cref{prop:seeds} satisfy the desired conclusion.
From now on, we treat planar surfaces with $n \geq 7$ punctures.

Let $F=\Sigma_{0,7}$, and let $\varphi$ be the mapping class
$$\varphi_7 = T_{r_5}^2T_{r_4}T_{r_3}^{-1}T_{r_1}^{-1}T_{r_4}T_{r_2}^{-1}T_{r_3}T_{r_1}T_{r_2}^{-1}$$ 
given in \Cref{prop:seeds}.
By \Cref{prop:seeds}, $\varphi$ is a principal pseudo-Anosov and the veering triangulation of the mapping torus $M_{\phicirc}$ is non-geometric. We will show that the fibered cone of $H_2(M_\varphi, \bdy M_\varphi)$ containing $[F]$ also contains a fiber homeomorphic to $\Sigma_{0,n}$ for every $n \geq 7$. Then, we will show that all of these fibers have principal monodromy.

\begin{figure}
        \centering
                \includegraphics[scale=.73]{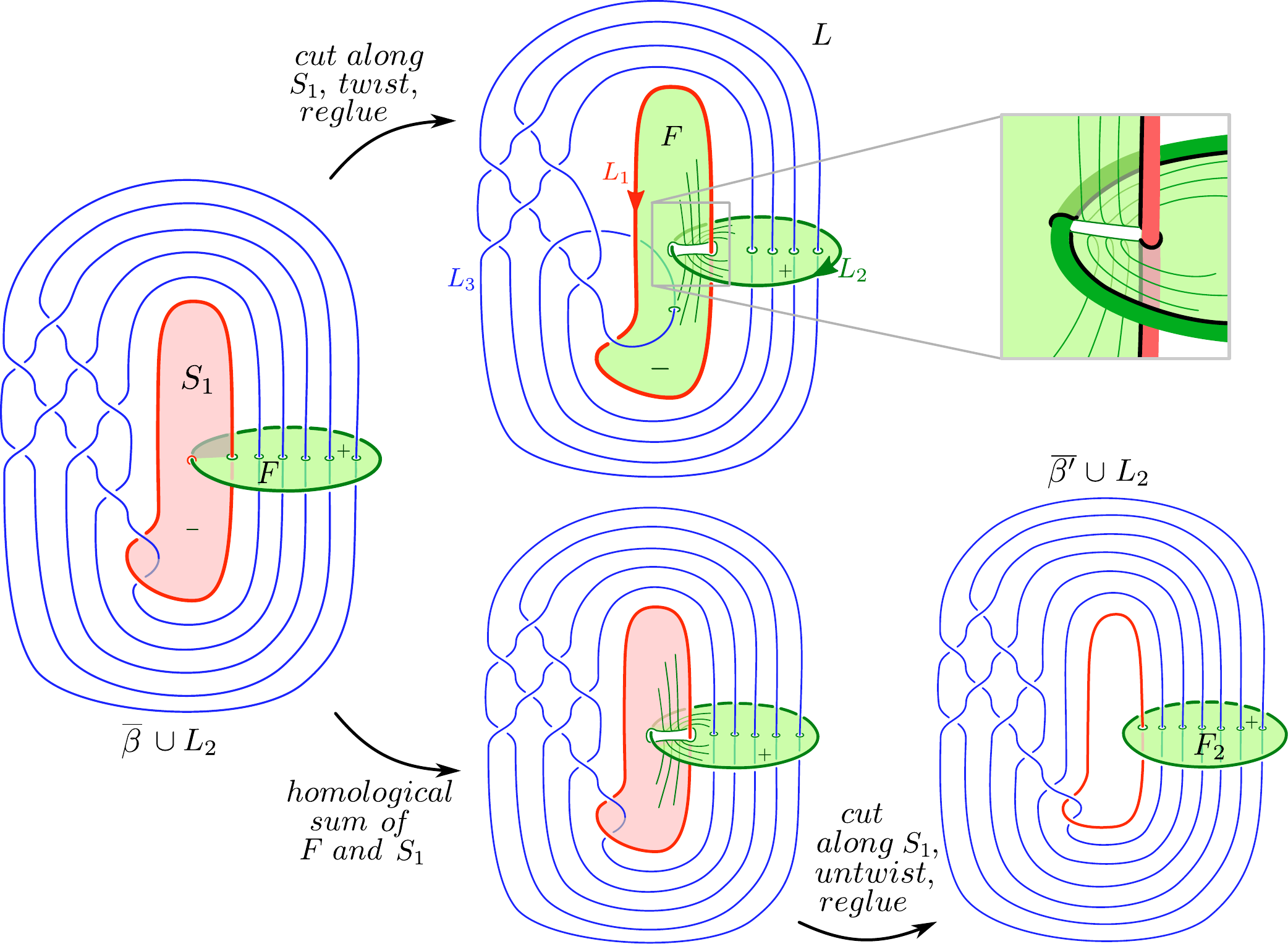}
       \caption{The mapping torus $M_\varphi$ of 
        $\varphi = T_{r_5}^2T_{r_4}T_{r_3}^{-1}T_{r_1}^{-1}T_{r_4}T_{r_2}^{-1}T_{r_3}T_{r_1}T_{r_2}^{-1}$ has many embeddings as a link complement in $S^3$. In the left panel, we realize $\varphi$ as a braid word $\beta$, whose braid generators are read from the bottom up. In the top center panel, we cut, twist, and reglue along the twice-punctured disk $S_1$, giving a re-embedding of $M$. In the bottom right panel, we twist along $S_1$ in the opposite direction, making it easier to see the fiber $F_2$ that is homologous to $F + S_1$ and compute the monodromy of $F_2$. Note that it is possible to obtain $F_a$ in a similar way---i.e., by successive applications of the process described by the bottom two arrows. This gives intuition for how to think about $F_a$  and compute its monodromy. 
         }
        \label{fig:link_seed_pod}
\end{figure}

To begin the proof, we embed $M = M_\varphi$ as a link complement in $S^3$. Note that the generators $T_{r_6}$ and $T_{r_0}$ do not appear in $\varphi$, hence one of the punctures of $F$ is fixed. We can therefore think of $\varphi$ as a mapping class on the $6$--punctured disk. More precisely, let $B_k$ be the braid group on $k$ strands, and consider the natural homomorphism $B_k \to \Mod(\Sigma_{0,k+1})$ defined by $\sigma_i \mapsto T_{r_i}$.
Then $\varphi$ is the mapping class corresponding to the braid
\[
\beta = \sigma_5^2\sigma_4\sigma_3^{-1}\sigma_1^{-1}\sigma_4\sigma_2^{-1}\sigma_3\sigma_1\sigma_2^{-1}.
\]
Consequently, the mapping torus $M_\varphi$ is homeomorphic to $S^3 \setminus (\overline{\beta} \cup L_2)$, where $\overline{\beta}$ is the braid closure of $\beta$ and $L_2$ is the braid axis. See the left panel of \Cref{fig:link_seed_pod}. In this embedding of $M_\varphi$, the fiber $F$ becomes the $6$--punctured disk shown in green.

Next, we re-embed $M$ into $S^3$ via a Rolfsen twist. That is: cut $M$ along the twice-punctured disk $S_1$ (colored pink in \Cref{fig:link_seed_pod}), perform one counter-clockwise full twist from the underside, and re-glue along $S_1$. After this operation, we have $M_\varphi  \cong S^3 \setminus L$, where $L=L_1\cup L_2\cup L_3$ is the three-component link in the upper center of \Cref{fig:link_seed_pod}.  The image of the fiber $F$ under this re-embedding is shown again in light green. 


 
The link $L$ allows a clear view of two surfaces that will be important for our homological computations:
the $2$--punctured disk $S_1$ bounded by $L_1$, and the $5$--punctured disk $S_2$ bounded by $L_2$. The top center of \Cref{fig:link_seed_pod} shows their (transverse) orientations: we are looking at the back side of $S_1$ and the front side of $S_2$.
Then, setting $x_1 = [S_1]$ and $x_2 = [S_2]$, we have 
 $[F]=x_1+x_2$. Since 
$$
5=\|[F]\|\le \|x_1\|+\|x_2\|\le1+4=5,
$$
 we learn that $\|x_1\| = 1$ and $\|x_2\| = 4$.
Furthermore, since the Thurston norm is only linear in the cone over a face (see \Cref{Fact:Linearity}), it follows that the segment joining $x_1$ to $x_2$ must lie in the fibered cone containing $[F]$.

Now, let $a$ be a positive integer and consider $y=ax_1+x_2$. Since $y$ is primitive, it is represented by a fiber $F_a$. In \Cref{fig:link_seed_pod}, $F_a$ can be visualized as the sum of $a$ copies of $S_1$ and one copy of $S_2$. We wish to compute the topological type of $F_a$, starting with the number of punctures.

Let $\bdy \colon H_2(M, \bdy M)\to H_1(\bdy M)$ be the boundary homomorphism $[S]\mapsto [\bdy S]$. To compute $\bdy y =\bdy(ax_1+x_2)= a \bdy x_1 +\bdy x_2$, it suffices to take the homological sum (in $H_1(\bdy M)$) of $a$ copies of  $[\bdy S_1]$ and one copy of $[\bdy S_2]$. Let $T_i$ be the torus of $\bdy M$ corresponding to the link component $L_i$. Then the only intersections of $\bdy S_1$ with $\bdy S_2$ occur on  $T_1$ and $T_2$. On the torus  $T_1$, there are $a$ copies of the longitude, coming from $a [\bdy S_1]$, and one copy of the meridian, coming from $[\bdy S_2]$. The homological sum of these is a single curve of slope $1/a$. The situation on $T_2$ is similar: there are $a$ copies of the meridian, and one copy of the longitude, giving a single curve of slope $a$. \Cref{fig:hom_sum} demonstrates this for $a=3$. Since $L_3$ intersects $S_1$ once and $S_2$ four times, the boundary of $y$ also contains $a+4$ copies of the meridian on the torus $T_3$. Furthermore, the orientations on $S_1$ and $S_2$ induce the same orientation on each of these $a+4$ copies of the meridian, so none of them cancel in $H_1(\bdy M)$. We conclude that the number of boundary components of $F_a$ is $1+1+(a+4)=a+6$.

\begin{figure}
        \centering
        \includegraphics[scale=1.2]{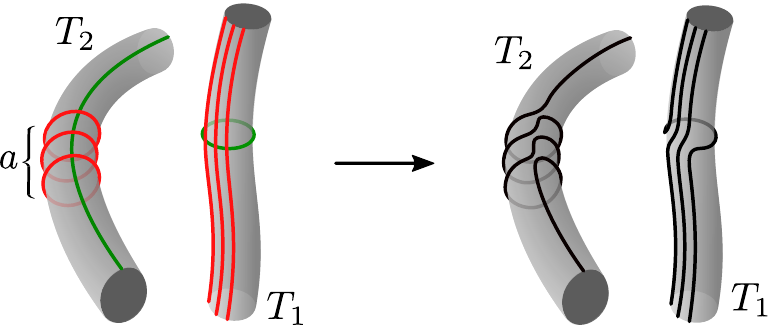}
       \caption{The homological sum $ a \bdy x_1 + \bdy x_2   \in H_1(\bdy M)$. The left frame shows $a \cdot \bdy S_1 \cap (T_1\cup T_2)$ in red and $\bdy S_2 \cap(T_1\cup T_2)$ in green. After the sum (right), there is a curve of slope $1/a$ on $T_1$ and a curve of slope $a$ on $T_2$.}
        \label{fig:hom_sum}
\end{figure}

Since the norm is linear on the cone over a face, we get $\|y\|=a\|x_1\|+\|x_2\|=a+4$. Furthermore, since $F_a \cong \Sigma_{g,n}$ is a fiber, hence norm-realizing, we have 
$$
a+4 = \|y\|=-\chi(F_a)=2g-2+n=2g-2+(a+6) = 2g + (a+4).
$$
We conclude that the genus of $F_a$ is $g=0$. Hence $F_a \cong \Sigma_{0,a+6}$. By varying the value of $a  \in \NN$, we get all surfaces $\Sigma_{0,n}$ for $n\ge 7$. By \Cref{Lem:FaceInvariance}, the veering triangulation associated to the monodromy for $F_a$ is the same as the veering triangulation for $F = F_1$, hence non-geometric. 

Next, we compute the monodromy of $F_2$ and show that it is principal.
Since $[F] = [F_1] = x_1 + x_2$, we have
\[
[F_2] = x_1 + x_1 + x_2 = [S_1] + [F].
\]
The fiber $F_2$ is shown in the bottom center frame of \Cref{fig:link_seed_pod}.
We may visualize the monodromy of $F_2$ by again re-embedding $M$ into $S^3$, via a Rolfsen twist in the opposite direction. 
That is: cut $M$ along the twice-punctured disk $S_1$, perform a full clockwise twist (from the underside), and reglue. This realizes $M$ as the complement of a new link, shown in \Cref{fig:link_seed_pod}, bottom right.
This link is $\overline{\beta'}\cup L_2$, where $\beta'=\sigma_6^2\sigma_5\sigma_4\sigma_3^{-1}\sigma_1^{-1}\sigma_4\sigma_2^{-1}\sigma_3\sigma_1\sigma_2^{-1} \in B_7$. After the re-embedding, the fiber $F_2$ becomes the green $7$-punctured disk shown, with monodromy $\psi$ corresponding to the braid word $\beta'$:
$$
\psi= T_{r_6}^2T_{r_5}T_{r_4}T_{r_3}^{-1}T_{r_1}^{-1}T_{r_4}T_{r_2}^{-1}T_{r_3}T_{r_1}T_{r_2}^{-1}.
$$
Using \texttt{flipper}, we confirm that $\psi$ is in fact principal. 

It remains to show that the monodromy of $F_a$ is principal for every $a \in \NN$. We already know this for $F_1$ and $F_2$.
We finish the proof using \Cref{Lem:DegeneracySlope} and linear algebra. For each cusp torus $T_i$ of $M$, let $\delta_i$ be a simple closed curve realizing the degeneracy slope of $F$, oriented in the direction of the flow $\eta$.  Then, for $i \in \{ 1,2,3 \}$, we have a sequence of homomorphisms ($\ZZ$ coefficients are presumed):
\[
H_2(M, \bdy M) \xrightarrow{\: \: \bdy \: \:} H_1(\bdy M) \xrightarrow{\: \: \pi_i \: \:} H_1(T_i) \xrightarrow{\: \iota(\cdot, \, \delta_i) \:} \ZZ,
\]
where $\pi_i \from H_1(\bdy M) = \oplus_{j=1}^3 H_1(T_j) \to H_1(T_i)$ is the projection map to the $i$-th coordinate and $\iota(\cdot, \delta_i)$ is the algebraic intersection pairing. The composition of these homomorphisms is a linear functional $\nu_i \from H_2(M, \bdy M) \to \ZZ$. Consider its values for $[F_1]$ and $[F_2]$.

On the torus $T_1$, both fibers $F_1$ and $F_2$ have a single boundary component (see \Cref{fig:hom_sum}, right). Since the monodromies of $F_1$ and $F_2$ are principal, both $\bdy F_1$ and $\bdy F_2$ intersect $\delta_1$ once. With our orientations, $\nu_1([F_1]) = \nu_1([F_2]) = 1$. Thus, by linearity, we have $\nu_1([F_a]) = 1$ for every $a$. By an identical argument, $\nu_2([F_1]) = \nu_2([F_2]) = 1$, hence $\nu_2([F_a]) = 1$ for every $a$. Finally, on the torus $T_3$, we have seen that $\bdy F_a$ consists of $a+4$ parallel components whose slope is independent of $a$. Since $F_1$ is principal, each of these components intersects $\delta_3$ once. Since every boundary component of $F_a$ intersects the degeneracy slope once, 
\Cref{Lem:DegeneracySlope} implies that the monodromy of $F_a$ is principal for every $a \in \NN$.
%
\end{proof}

\begin{remark}
    \texttt{flipper} has the capability to compute degeneracy slopes from a veering triangulation, using
     \cite[Observation 2.9]{FG13}. A combination of \texttt{flipper} and \texttt{Snappy} shows that in $M \cong S^3 \setminus(L_1 \cup L_2 \cup L_3)$, the degeneracy slope $\delta_1$ is the longitude of $L_1$; 
meanwhile, $\delta_2$ is the meridian of $L_2$; and $\delta_3$ is (meridian$\, - \,$longitude) on $L_3$. This fact, combined with \Cref{fig:hom_sum}, gives an alternate proof that every $F_a$ has principal monodromy. The above argument using the linear functionals $\nu_i$ avoids the need to ever identify $\delta_i$.
    \end{remark}

\begin{proof}[Proof of \Cref{Prop:NongeomExists}]
Let $S$ be a hyperbolic surface. If $\xi(S) = 0$, then $S \cong \Sigma_{0,3}$, hence $\Mod(S)$ is finite. If $\xi(S) = 1$, then $S \cong \Sigma_{0,4}$ or $\Sigma_{1,1}$, and the work of Akiyoshi \cite{Aki99}, Lackenby \cite{Lackenby:Bundle}, and Gu\'eritaud \cite{Gue06} shows that  all pseudo-Anosov mapping classes in $\Mod(S)$ have geometric veering triangulations. 

Now, assume that $\xi(S) \geq 2$. Under this hypothesis, \Cref{Prop:NonGeomPosGenus,Prop:NonGeomZeroGenus} show that there exists a principal pseudo-Anosov $\varphi \in \Mod(S)$ such that the associated veering triangulation of the mapping torus $\mathring M_{\varphi}$ is non-geometric.
\end{proof}

\bibliographystyle{hamsplain}
\bibliography{biblio}

\end{document}